\theoremstyle{plain}
\newtheorem{theorem}{Theorem}[section]
\newtheorem{proposition}[theorem]{Proposition}
\newtheorem{corollary}[theorem]{Corollary}
\newtheorem{lemma}[theorem]{Lemma}
\newtheorem{question}{Question}
\theoremstyle{definition}
\newtheorem{definition}[theorem]{Definition}
\newtheorem{condition}[theorem]{Condition}
\newtheorem{observation}[theorem]{Observation}
\newtheorem{example}{Example}[section]
\newcommand{\Hidden}[1]{}
\newcommand{\N}{\mathbb{N}}
\DeclarePairedDelimiterXPP\tr[1]{\operatorname{tr}}{(}{)}{}{#1}
\newcommand{\kemeny}{\mathcal{K}}
\newcommand{\1}{\mathbb 1}
\DeclareMathOperator{\vol}{vol}
\renewcommand{\P}{\mathbb{P}}
\newcommand{\E}{\mathbb{E}}
\DeclarePairedDelimiter{\floor}{\lfloor}{\rfloor}
\title{
    On defining Kemeny's constant for non-backtracking random walks
}
\author{
    Jane Breen\thanks{Faculty of Science, Ontario Tech University, Oshawa, ON L1K1J9, Canada (\texttt{jane.breen@ontariotechu.ca}).
J. Breen is supported by the Natural Sciences and Engineering Research Council of Canada (NSERC) Grant RGPIN-2021-03775.},
    Mark Kempton\thanks{
        Department of Mathematics, Brigham Young University, Provo UT 84602, U.S.A. (\texttt{mkempton@mathematics.byu.edu}).
    }, 
    Adam Knudson\thanks{
    Department of Mathematics, University of California Los Angeles, Los Angeles, CA 90095, U.S.A. (\texttt{adamknudson@math.ucla.edu}).},
    and 
    Matthew Shumway\thanks{
        Department of Mathematics, Brigham Young University, Provo UT 84602, U.S.A.
        (\texttt{mwshumway7@gmail.com}).}
}
 \date{}
\begin{document}


\maketitle
\begin{abstract}
    We propose two possible definitions for a version of Kemeny's constant of a graph based on non-backtracking random walks (in place of the usual simple random walk).  We show that these two definitions coincide for edge-transitive graphs, and give a condition generalizing edge-transitive for which equality holds, and investigate by how much they can differ in general.  We compute our non-backtracking Kemeny's constant for several families of graphs.  
\end{abstract}

\bigskip

\noindent\textbf{MSC:} 05C50; 05C81

\noindent\textbf{Keywords:} Kemeny's constant; non-backtracking walks; Markov chains

\section{Introduction}

Kemeny's constant of a Markov chain is an important parameter measuring how quickly on average the Markov chain can travel from one state to another.  We will be studying Kemeny's constant for random walks on simple undirected graphs.  This constant is a graph parameter giving an indication of how well connected the graph is and how quickly the random walk can move among the vertices of the graph.  

Considerable research has been done in recent years comparing non-backtracking random walks to the usual simple random walks on graphs. In many contexts, the non-backtracking condition is intuitive and natural as a model for processes on a network. Early work suggests that in most cases, the non-backtracking condition leads to a faster mixing rate (see \cite{alon2007non,kempton2016non}). Non-backtracking variants of PageRank \cite{aleja2019non,arrigo2019non,glover2022effects} and graph clustering algorithms \cite{krzakala2013spectral} have been studied, where it is hoped that the non-backtracking condition leads to improved performance.  

In this work, we study how Kemeny's constant is affected when we consider non-backtracking random walks in place of simple random walks on a graph.  The study of a non-backtracking version of Kemeny's constant was initiated by the authors in \cite{breen2023kemeny}.  In that work, we considered Markov chains on the state space of directed edges of the graph (where each edge in our simple undirected graph is viewed as two directed edges).  This has the advantage that, viewed on this state space, the non-backtracking random walk is a Markov chain.  However, many questions about Kemeny's constant that are frequently studied ask about comparisons among graphs with the same number of vertices, or families of graphs based on the number of vertices (see \cite{breen2019computing} for instance).  When working with the directed edges as the state space, such questions can be obscured by differing numbers of edges in the graph.  Thus it is desirable to have a version of the non-backtracking Kemeny's constant that still uses the vertices as the state space.  The difficulty here is that on the state space of vertices, the non-backtracking walk is not a Markov chain.  Overcoming this difficulty is the main subject of this paper.  

In this paper we propose two possible definitions for a ``vertex space" version of the non-backtracking Kemeny's constant---one based on non-backtracking mean first passage times, and one based on a non-backtracking version of the fundamental matrix of the random walk.  Both are based on strong analogies with formulas for the usual Kemeny's constant of a graph.  Our main result is to show that these two definitions coincide for edge-transitive graphs (and state a more general condition that also implies they are the same).  We also investigate how far these two definitions can be from one another in general, and give several examples.


Our work draws heavily on recent work from \cite{fasino2023hitting} on second-order random walks (i.e. random walks with two steps of ``memory").  Here, mean first passage times were defined between vertices of the graph by ``projecting" the edge-space quantities onto the vertices. We want to build on this by performing comparable projections on Kemeny's constant in this context of second-order random walks.


For the interested reader, we have made available at a \href{https://github.com/mwshumway/NBRW/tree/main?tab=readme-ov-file}{GitHub page} \cite{githubpage}
several Sage functions that we have relied on for numerical computations of many of the quantities involved in our study. This code is one of the additional contributions of our work.

\section{Mathematical preliminaries}

\subsection{Markov chains and Kemeny's constant}\label{sec:kemeny_prelim}
A discrete, finite-state, time-homogeneous Markov chain can be thought of as a stochastic process on a finite state space, say $\mathcal{S} = \{s_1, s_2, \ldots, s_n\}$. Letting $X_k$ be the random variable denoting the state of the process at time-step $k$ (i.e.~$X_k\in \mathcal{S}$), the sequence $\{X_k \colon k\in\mathbb{N}\}$ is a Markov chain if it has the property that 
\[\P(X_{k+1}= x_{k+1} \mid X_k = x_k, \ldots, X_0 = x_0) = \P(X_{k+1} = x_{k+1} \mid X_k = x_k).\]
This can be interpreted as saying that the probability of transitioning to a given state in the next time step depends only on the current state of the system. That is, the probability $p_{ij}$ of transitioning from state $s_i$ to state $s_j$ in a single time-step is fixed. Thus any Markov chain is defined completely by its probability transition matrix $P=[p_{ij}]$, which is a non-negative row-stochastic matrix,~i.e. its rows sum to 1, or $P\1 = \1$, where $\1$ represents the column vector of all ones. Furthermore, the long-term behavior of the system is governed by powers of this matrix, in that the $(i,j)$ entry of $P^k$ is the probability of the process being in state $s_j$ at time $k$, given that it started in state $s_i$. If the matrix $P$ is \emph{primitive}, then, due to the Perron-Frobenius theorem, \[\lim_{k\to\infty} P^k = \mathbb{1}\pi^\top,\] where $\pi$ is a left eigenvector of $P$ corresponding to the Perron eigenvalue 1, normalized so that the entries of $\pi$ sum to 1. Thus $\pi$ represents a probability distribution, and this result may be interpreted to say that the long-term probability of being in the $j^{th}$ state at time $k$ is equal to $\pi_j$ as $k$ tends to infinity, independent of the initial state of the process. This vector $\pi$ is referred to as the stationary distribution, and the $j^{th}$ entry may be interpreted as a measure of the relative `importance' of $s_j$ in the Markov chain.

To quantify the short-term behavior of the Markov chain, one can discuss the first passage times, or first hitting times. Let $F_{ij}$ be the random variable denoting the first time $s_j$ is reached, given that the process starts in $s_i$. Then the expected value of this random variable $\E(F_{ij})$ is referred to as the \emph{mean first passage time} from $i$ to $j$, and denoted $m_{ij}$. (Note: as convention, the first passage time from $i$ to $i$ is considered to be the first time $s_i$ is reached after leaving $s_i$, or the \emph{first return time}. The first hitting time $F_{ii}$ is considered to be zero. Otherwise, hitting times and first passage times are equivalent.) It is not difficult to argue by conditioning on the first step of the chain that the mean first passage times satisfy the following equation:
\[m_{ij} = \sum_{\substack{k=1\\k\neq j}}^n p_{ik}m_{kj} + 1. \]
Thus it follows (see \cite[Section 6.1]{stevesbook}) that the matrix of mean first passage times is the unique matrix $M= [m_{ij}]$ that satisfies the matrix equation
\[M = P(M-M_{dg}) + J,\]
where $M_{dg}$ denotes the diagonal matrix with entries $m_{11}, m_{22}, \ldots, m_{nn}$, and $J$ is the all-ones matrix. It is easily shown from here that the mean first return time $m_{ii} = \frac{1}{\pi_i}$; i.e. $M_{dg}$ can be found to be $\Pi^{-1}$, where $\Pi=\mathrm{diag}(\pi)$. 

Rather than rely on probability arguments to find the mean first passage times, one can solve the above matrix equation using \emph{generalized inverses}. In particular, $M$ is the solution to \[(I-P)M = J-\Pi^{-1},\] 
and while $(I-P)$ is a singular matrix, there is nevertheless a unique solution to this equation.
In \cite{kemenysnell}, the \emph{fundamental matrix} of an ergodic Markov chain is introduced, defined as 
\begin{equation}\label{eq:fund_matrix}
    Z = (I-P+\1\pi^\top)^{-1}.
\end{equation}
This serves as a generalized inverse for $I-P$, with the properties that $ZP = PZ$, $Z\1 = \1$, $\pi^\top Z = \pi^\top$, and $(I-P)Z = I - \1\pi^\top$. This matrix is referred to as the fundamental matrix of the chain as it is ``the basic quantity used to compute most of the interesting descriptive quantities for the behavior of the Markov chain'' (see \cite[Section 4.3]{kemenysnell}). In particular, the matrix $M$ of first hitting times (i.e.~using the convention that $m_{ii}=0$) can be written as 
\[M = (-Z+JZ_{dg})\Pi^{-1}.\]  

We are now ready to introduce Kemeny's constant, and discuss various formulations of it which will be explored throughout this paper.

Kemeny's constant was first introduced in 1960 in \cite{kemenysnell} as follows: Fix a starting state $s_i$, and consider the quantity $\kappa_i = \sum_{j\neq i}m_{ij}\pi_j.$ This can be interpreted as the expected length of trip from state $i$ to a randomly-chosen state (with respect to the probability distribution $\pi$). Astonishingly, this was found to be independent of the choice of index $i$! This is shown in a rather straightforward manner by considering that 
\begin{eqnarray*}
M\pi &=& (-Z+JZ_{dg})\Pi^{-1}\pi\\
& = & (-Z+JZ_{dg})\1\\
& = & - \1 + JZ_{dg}\1\\
& = & (\mathrm{trace}(Z)-1)\1.
\end{eqnarray*}
As such, it follows that the quantity $\kappa_i$ defined above is equal to trace$(Z)-1$ for all $i$. It is denoted $\mathcal{K}(P)$, without dependence on $i$, and was later referred to as \emph{Kemeny's constant} (see \cite{grinstead1997introduction}). Since $\pi^\top\1 = 1$, it follows that we can also write $\mathcal{K}(P)$ as $\pi^\top M\pi$, which gives the expression 
\[\mathcal{K}(P) = \sum_{i=1}^n\sum_{\substack{j=1\\j\neq i}}^n \pi_im_{ij}\pi_j,\]
and can be interpreted as the expected length of a random trip between states in the Markov chain, where both the starting and ending states are chosen randomly with respect to the stationary distribution of the chain. For this reason, Kemeny's constant has become a useful quantifier of the behavior of a Markov chain, described as an \emph{expected time to mixing} and relating to how well-conditioned the Markov chain is (see \cite{hunter2006mixing}).  

In 2002 in \cite{levene2002kemeny}, Levene and Loizou observed that based on the expression for $\mathcal{K}(P)$ in terms of the trace of the fundamental matrix, there is a simple expression for it in terms of the eigenvalues of $P$. In particular, if the eigenvalues of $P$ are $1, \lambda_2, \ldots, \lambda_n$, then 
\[\mathcal{K}(P) = \sum_{j=2}^n \frac{1}{1-\lambda_j}.\]
This expression in particular has sparked a great deal of interest in the value of Kemeny's constant as a spectral graph invariant in the case that we consider random walks on graphs.


\subsection{Random walks and non-backtracking random walks}\label{sec:nbprelim}
A graph $G$ is defined in terms of a vertex set $V(G)$ and an edge set $E(G)$. In this work, we consider simple, undirected graphs, in which the edge set consists of unordered pairs of vertices $\{u, v\}$, indicating that the vertices $u$ and $v$ are joined by an edge. In this instance, the vertices $u$ and $v$ are said to be \emph{adjacent}, $u$ and $v$ are both said to be \emph{incident} to the edge $\{u, v\}$, and $u$ is a neighbor of $v$ (and vice versa). The number of neighbors of a vertex $v$ is called the \emph{degree} of $v$, and is denoted by $\deg(v)$ or $d_v$.

A random walk on a graph $G$ is the Markov chain whose states are vertices of the graph, and the transition from one state to the next chooses uniformly at random from among the neighbors of the current vertex.  As such, the transition matrix is given by $P=D^{-1}A$ where $A$ is the adjacency matrix of $G$, and $D$ is the diagonal matrix of degrees of each vertex.  It is well-known that the stationary distribution $\pi$ for this Markov chain is given by $\pi_j=\deg(j)/2m$ where $m$ is the number of edges of $G$.

A \emph{non-backtracking random walk} on a graph $G$ is one in which the random walker is not permitted to immediately return to a vertex visited in a previous step. That is, if the random walker visits $u$ and then its neighbor $v$, the next step must choose among the neighbors of $v$ other than $u$. We will use the abbreviations NBRW to refer to a non-backtracking random walk, and SRW to refer to the usual simple random walk.  Since the transition probabilities depend not only on the current state of the process but also the previous state, this is no longer a Markov chain. However, one can equivalently think of the random walker as occupying the directed edges of the graph, and define a Markov chain which accounts for the previous two vertices visited. More generally this is called a second-order Markov chain (see \cite{fasino2023hitting}), and is defined with a state space consisting of ordered pairs of the original states. Such ordered pairs $(u,v)$, in the context of graphs, can be considered as \emph{arcs} or \emph{directed edges} from $u$ to $v$. If the random walker's state is this edge $(u,v)$, then the state on the next step is some edge $(v,w)$.  The non-backtracking condition simply says that we choose uniformly at random from among edges of the form $(v,x)$ for $x$ other than $u$.  Thus the possible next states depend only on the current state (the directed edge $(u,v)$).  Thus the non-backtracking random walk on this state space of directed edges is a Markov chain.  See \cite{breen2023kemeny,fasino2023hitting, glover2021some, kempton2016non} for discussions and examples of this perspective.  Throughout this paper, we will talk about taking random walks (simple or non-backtracking) in the ``vertex space" or the ``edge space" respectively. 

We will denote by $P_{nb}$ the transition matrix for the NBRW on the edge space.  That is, $P_{nb}$ is the $2m\times 2m$ matrix whose entries are given by \[P_{nb}((i,j),(k,\ell)) = \begin{cases}
    \frac{1}{\deg(j)-1}, &\text{ if } j=k \text{ and } \ell\neq i;\\
    0, &\text{ otherwise}.
\end{cases}\]  We will also find it convenient to talk about the matrices that transition back and forth between the edge space and the vertex space.  Define the $2m \times n$ matrix $S$ to be the matrix with entries \[S((i,j),x)=\begin{cases}
    1, & \text{ if } j=x;\\0, & \text{ otherwise.}
\end{cases}\]
Thus $S$ is the matrix for the operator that ``lifts" from the vertex space to the edge space.  Likewise, define the $n\times 2m$ matrix $T$ whose entries are given by \[T(x,(i,j))=\begin{cases}
    1, &\text{ if } i=x;\\0, &\text{ otherwise.}
\end{cases}\]  Thus $T$ is the matrix for the operator that ``projects" from the edge space to the vertex space.  We will frequently use these matrices to translate between vertex and edge space.  We note that an easy computation shows that $TS=A$ where $A$ is the adjacency matrix of the graph \cite{kempton2016non}.

It can be observed that the stationary distribution for the edge space non-backtracking walk, which we will call $\pi_e$ is the uniform distribution on the directed edges, thus $\pi_e = \frac{1}{2m}\1$. This is the same stationary distribution as for the simple random walk on the edge space.  Projecting this distribution from the edge space to the vertex space yields the usual stationary distribution $\pi$ where $\pi_j=\deg(j)/2m$.  Thus the non-backtracking random walk on an undirected graph has the same stationary distribution as the simple random walk \cite{kempton2016non}.

As we saw in Section \ref{sec:kemeny_prelim}, hitting times (or first passage times) for random walks are central to defining Kemeny's constant.  We will thus review how to compute hitting times for non-backtracking random walks. For a thorough treatment of this topic, see the recent work in \cite{fasino2023hitting}. 
\begin{definition}\label{def: nbrw first hitting time matrix}
Let $G$ be a graph, and consider a non-backtracking random walk $\{\Tilde{X}_k: k \in \N\}$ on the vertices of $G$; that is, where $\Tilde{X}_k \in V(G)$ for each $k$. Define $\Tilde{F}_{ij}$ to be the random variable counting the number of steps $k$ until $\Tilde{X}_k = j$, given that $\Tilde{X}_0 = i$. Then the non-backtracking first hitting time is $\mathbb{E}(\Tilde{F}_{ij})$, and the matrix of non-backtracking first hitting times is denoted $M_v^{(nb)}$, defined entrywise as

    \[
    \left(M^{(nb)}_v\right)_{ij} = \begin{cases}
        \mathbb{E}[\Tilde{T}_j \mid \Tilde{X}_0 = i], & \text{if } j \neq i; \\
        0,                            & \text{otherwise}.
    \end{cases}
    \]
\end{definition}

We now discuss the computation of this matrix.
Note that in the edge space of a graph, the random walk (either simple or non-backtracking) is considered to have ``arrived'' at a vertex $v$ if it is currently in any state (edge) of the form $(u,v)$. As such, the first hitting time from an edge $(i, j)$ to a vertex $v$ can be calculated using expressions from Markov chain theory for the hitting time from a state to a set of states, by taking a principal submatrix of the transition matrix and calculating a matrix inverse. In particular, for a Markov chain with transition matrix $P$, the vector of first hitting times to a set of states $\mathcal{S}^\ast$ is given by the row sums of the matrix $(I-P_{(\mathcal{S}^\ast)})^{-1}$. We denote by \( M^{(nb)}_{ev} \) the \( 2m \times n \) matrix with entries 
\[
(M^{(nb)}_{ev})_{(i, j), v} = \mathbb{E}[\widetilde{T}_v \mid \Tilde{X}_0 = i, \Tilde{X}_1 = j],
\]
representing this expected hitting time of vertex \( v \), conditioned on the first two steps of the non-backtracking random walk $\{\Tilde{X}_k : k \in \mathbb{N}\}$. 

With the matrix $M_{ev}^{(nb)}$ in hand, the $n\times n$ matrix $M^{(nb)}_{v}$ can then be computed using \begin{equation} \label{eq:Mnb_v_related_to_Mev}
    M^{(nb)}_{v} = D^{-1} T M^{(nb)}_{ev}.
\end{equation}




For more details on properties of these matrices, see \cite[Section 4]{fasino2023hitting} and Section \ref{subsec: 32} of this work.

\section{Kemeny's Constant: Vertex Space Perspectives}

\subsection{Definitional Approaches: First Hitting Times and a Fundamental Matrix}
Since the non-backtracking random walk does not form a Markov chain when the state space is restricted to the vertices of the graph, extending Kemeny’s constant to this context is not straightforward. In this section, we propose several candidate definitions for Kemeny’s constant in the case of the non-backtracking random walk on the vertices of a graph $G$. We will denote this generalized quantity by $\kemeny^{(nb)}_v(G)$.

\subsubsection{First Hitting Times}

Adopting the convention for first hitting times where \( m_{ii} = 0 \), recall that Kemeny’s constant for the simple random walk can be expressed as:  
\[
\kemeny(G) = \sum_{i=1}^n \sum_{j=1}^n \pi_i m_{ij} \pi_j = \pi^T M \pi,
\]  
where \( \pi \) is the stationary distribution vector, and \( M \) is the matrix of first hitting times between vertices.

Perhaps the most natural definition of $\kemeny^{(nb)}_v(G)$ arises from the above expression, where the entries of \( M \) are replaced with the non-backtracking first hitting times between vertices which we described how to compute in Section \ref{sec:nbprelim}.  We use this to propose our first definition of the non-backtracking Kemeny’s constant for a graph \(G\) as follows.

\begin{definition}\label{def: first hitting time kemeny}
    We define the \textit{first hitting time} Kemeny’s constant for a NBRW as \[
    \kemeny^{(nb)}_v(G) = \pi^T M^{(nb)}_v \pi,
    \]
\end{definition}

\noindent
It is worth noting that the above quantity is defined in terms of \(\pi\): it was first shown in Theorem 2 of \cite{kempton2016non} that the stationary distributions of the NBRW and the SRW are the same, which is $\pi_i = \frac{\deg(i)}{\vol(G)}$. 

Additionally, we clarify that the motivation for adopting the quadratic form arises from a key distinction: unlike its simple counterpart, the relationship $M^{(nb)}_v\pi = c\1$ does not necessarily hold. As characterized in Corollary 5.7 of \cite{fasino2023hitting}, the condition required for $M^{(nb)}_v\pi = c\1$ is that for all $i$, $\mathbb{E}[\Tilde{T}_i^+ \mid \Tilde{X}_1 = j, \Tilde{X}_0=i]$ is independent of $j$. Said differently, the return time of the non-backtracking random walk to vertex $i$ does not depend on the first step.

This condition will resurface many times, so we state it here for future reference.

\begin{condition}\label{cond: locally uniform return time}
    Given a graph $G$, we say that the non-backtracking random walk on $G$ has \emph{locally uniform return time}, if for all $i$, \[
    \mathbb{E}[\Tilde{T}_i^+ \mid \Tilde{X}_1 = j, \Tilde{X}_0=i]
    \]
    is independent of $j$.
\end{condition}

A more complete understanding of this condition is elusive, but desirable. One sufficient condition, given by the authors of \cite{fasino2023hitting}, is that for any two edges $e, f$ in the outgoing edges of vertex $i$, there exists an automorphism mapping $e \mapsto f$. This is satisfied for any edge-transitive graph, but is not a necessary condition as the following examples indicate.

\begin{example}
    Consider the two graphs on eight vertices shown below. Neither graph is edge-transitive, yet both satisfy Condition \ref{cond: locally uniform return time}. Interestingly, their analogues on a different number of vertices fail to do so (with the exception of the analogue of the left graph on six vertices, as this graph is edge-transitive). Empirical searches reveal that these are the only connected graphs on $n\leq 9$ vertices that meet this condition without being edge-transitive.
    \begin{figure}[ht]
        \centering
        \includegraphics[scale=0.5]{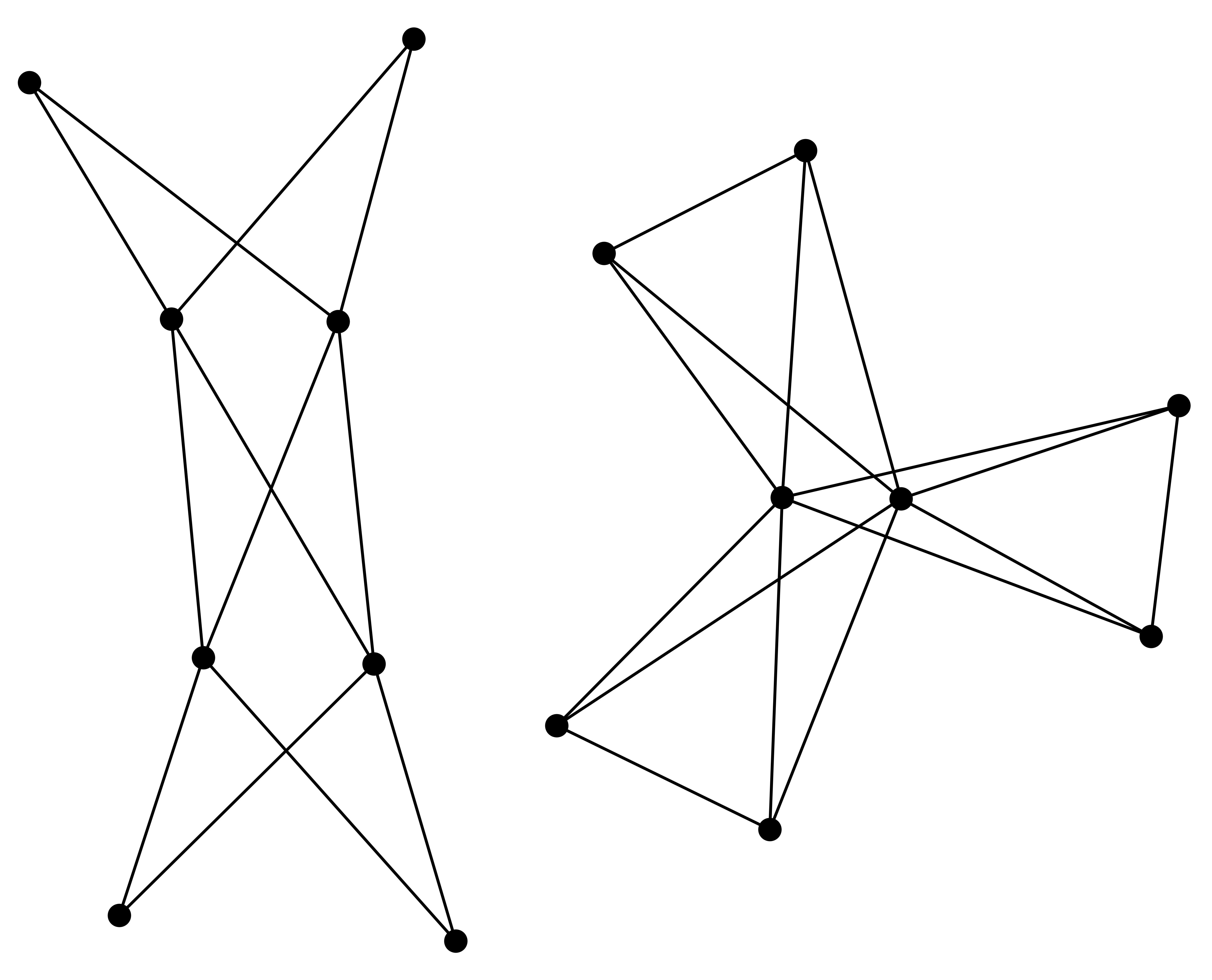}
        \caption{Two graphs that satisfy Condition \ref{cond: locally uniform return time} but are not edge-transitive.}
        \label{fig:condition_not_edge_transitive}
    \end{figure}
\end{example}

\subsubsection{Fundamental Matrix Projection}
Another possible avenue for defining Kemeny's constant in this context is through the trace of a fundamental matrix. For the simple random walk, the fundamental matrix $Z$ can be used to compute Kemeny's constant through the relation $\kemeny(G) = \text{tr}(Z) - 1$. The fundamental matrix exists for any ergodic Markov chain. 

For the non-backtracking random walk on the edge space, the fundamental matrix is given by  
\[
Z^{(nb)}_e = (I - P_{nb} + \mathbb{1}\pi_e^T)^{-1},
\]  
as in \eqref{eq:fund_matrix} and can be used to compute $\kemeny^{(nb)}_e(G)$ similarly to the SRW. While an explicit formulation of the non-backtracking fundamental matrix in the vertex space is not readily apparent, it is of interest to investigate whether an analogous construct with similar properties can be established. To pursue this, we begin with the following definition.

\begin{definition}
Let $G$ be a graph, and consider the non-backtracking random walk on the vertices of $G$, $\{\Tilde{X}_k: k \in \mathbb{N}\}$. Define an $n\times n$ matrix $\Tilde{P}^{(k)}$ whose $(i,j)$ entry is 
\[\Tilde{P}_{ij}^{(k)} = \P(\Tilde{X}_k = j \mid \Tilde{X}_0 = i).\]
That is, $\Tilde{P}_{ij}^{(k)}$ represents the probability of transitioning from vertex $i$ to vertex $j$ in $k$ non-backtracking steps.

\end{definition}

Next, we establish a relationship between this $k$-step transition probability matrix for the NBRW on the vertices, and the transition matrix for the NBRW on the directed edges of the graph. This relation is made clear in the following proposition.
\begin{proposition}\label{prop: vertex and edge transition prob relation}
    Let $k \in \mathbb{N}$. Then,
    \[
    \Tilde{P}^{(k+1)} = D^{-1}TP_{nb}^kS = D^{-1}TP_{nb}^{k+1}T^T,
    \]
\end{proposition}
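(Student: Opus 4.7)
The plan is to interpret each matrix product probabilistically and reduce both equalities to a single key identity. Since the non-backtracking walk on vertices fails to be a Markov chain, one must condition on the first step (after which the walk becomes a genuine Markov chain in the edge space) and then project back to the vertex space using $T$ and $S$.

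For the first equality $\tilde P^{(k+1)} = D^{-1} T P_{nb}^k S$, I would condition on $\tilde X_1$. Given $\tilde X_0 = i$, the first step is uniform over the neighbors of $i$ (probability $1/\deg(i)$ each), and conditional on $(\tilde X_0, \tilde X_1) = (i,v)$ the remaining $k$ steps form a Markov chain on the directed edges starting at the state $(i,v)$. Thus
\[
\tilde P^{(k+1)}_{ij} \;=\; \frac{1}{\deg(i)} \sum_{v \sim i} \Pr\!\big(\text{NBRW is at vertex } j \text{ after } k \text{ edge-space steps from } (i,v)\big).
\]
The inner probability is $\sum_{u\sim j}(P_{nb}^k)_{(i,v),(u,j)}$, which by the definition of $S$ (picking out edges with head $j$) equals $(P_{nb}^k S)_{(i,v),j}$. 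The sum over $v\sim i$ is exactly the action of $T$ (summing rows indexed by edges out of $i$), and the factor $1/\deg(i)$ is the action of $D^{-1}$. Combining these identifications gives the first equality.

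For the second equality it suffices, after cancelling the common $D^{-1}T$, to prove the matrix identity $P_{nb} T^T = S$. I would verify this entry by entry: using that $(T^T)_{(k,\ell),x} = 1$ iff $x=k$, the sum
\[
(P_{nb} T^T)_{(i,j),x} \;=\; \sum_{(k,\ell)} (P_{nb})_{(i,j),(k,\ell)}\,[x=k]
\]
is nonzero only when $x=j$ (because $P_{nb}((i,j),(k,\ell))$ requires $k=j$), and in that case it equals $\sum_{\ell\sim j,\;\ell\ne i}\tfrac{1}{\deg(j)-1} = 1$. This matches $S_{(i,j),x}=[x=j]$ exactly, giving $P_{nb}T^T = S$; hence $P_{nb}^k S = P_{nb}^{k+1}T^T$, and multiplying by $D^{-1}T$ on the left yields the second equality.

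No step is genuinely difficult, but the main care-point is keeping the conventions for $S$ and $T$ straight: $S$ lifts a vertex vector to edges by evaluating at the head, whereas $T^T$ lifts by evaluating at the tail, and the identity $P_{nb}T^T = S$ is precisely the statement that one non-backtracking step sends the tail-indicator to the head-indicator. Once this is internalized, the rest is a direct combinatorial unwinding.
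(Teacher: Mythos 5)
Your proof is correct and, for the first equality, takes essentially the same route as the paper: expand $(D^{-1}TP_{nb}^kS)_{ij}$ entrywise and interpret the result as the probability of being at vertex $j$ after $k+1$ non-backtracking steps from $i$, by conditioning on the first step into the edge space. For the second equality the paper merely remarks that it ``follows similarly,'' whereas you supply an explicit and cleaner argument: the matrix identity $P_{nb}T^T = S$, which you verify entrywise and which reduces the claim to $P_{nb}^kS = P_{nb}^k(P_{nb}T^T) = P_{nb}^{k+1}T^T$. That identity (one non-backtracking step carries tail-indicators to head-indicators, using that each row of $P_{nb}$ supported on out-edges of $j$ sums to $1$, which implicitly requires $\deg(j)\ge 2$) is a nice structural fact not isolated in the paper, and makes the second equality an immediate algebraic consequence of the first rather than a parallel calculation.
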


\begin{proof}
    Let $\{W_k: k \in \mathbb{N}\}$ be a non-backtracking random walk on the directed edges of the graph, and let $\{X_k: k \in \mathbb{N}\}$ be a non-backtracking random walk on the vertices. Let $O_j$ and $I_j$ respectively denote the out- and in-edges of a vertex $j$.

    When $k = 0$, we have  
    \[
    D^{-1}TS = D^{-1}A = P = \Tilde{P}^{(1)},
    \]  
    where $A$ is the adjacency matrix.  

    Now, for $k \geq 1$, observe that  
    \begin{align*}
        (D^{-1}TP_{nb}^kS)_{ij} &= \sum_{(x, y)} (D^{-1}T)_{i,(x,y)}(P_{nb}^kS)_{(x,y), j} \\
        &= \sum_{y} \frac{1}{d_i} (P_{nb}^kS)_{(i, y), j} \\
        &= \sum_{y}\frac{1}{d_i}\sum_{a} (P_{nb}^k)_{(i,y),(a, j)} \\
        &= \sum_{a}\sum_{y} \frac{1}{d_i} (P_{nb}^k)_{(i,y),(a,j)} \\
        &= \sum_{a}\P(W_{k+1} = (a, j) \mid W_0 \in O_i) \\
        &= \P(W_{k+1} \in I_j \mid W_0 \in O_i) \\
        &= \P(X_{k+1} = j \mid X_0 = i) \\
        &= \Tilde{P}_{ij}^{(k+1)}.
    \end{align*}
    The proof for $T^T$ follows similarly.  
\end{proof}

The above expression facilitates a matrix-theoretic approach to developing an analogue of the fundamental matrix. Our proposed definition is inspired by the relation 

\begin{align}\label{exp: Z power series}
Z = \sum_{k=0}^\infty (P - \mathbb{1}\pi^T)^k = I+\sum_{k=1}^\infty (P^k - \mathbb{1}\pi^T),
\end{align}
as established in \cite{grinstead1997introduction}.

\begin{observation}\label{obs: series expansion for defining Z}
    Consider a similar series to (\ref{exp: Z power series}) for the non-backtracking random walk, using the $k$-step transition probability matrix $\Tilde{P}^{(k)}$. \begin{align*}
    I+\sum_{k=1}^\infty (\Tilde{P}^{(k)} - \mathbb{1}\pi^T) &= I + P - \mathbb{1}\pi^T + D^{-1}T\sum_{k=1}^\infty (P_{nb}^k - \mathbb{1}\pi_e^T)S && (\text{Proposition~\ref{prop: vertex and edge transition prob relation}}) \\
    &= I + P - \1\pi^T + D^{-1}T\sum_{k=1}^\infty (P_{nb} - \1\pi_e^T)^kS \\
    &= I + P - \mathbb{1}\pi^T + D^{-1}T[(I - P_{nb} + \mathbb{1}\pi_e^T)^{-1} - I]S \\
    &= I + D^{-1}T(I - P_{nb} + \mathbb{1}\pi_e^T)^{-1}S - \1\pi^T.
\end{align*}

Similar work shows that 
\begin{align*}
    I+\sum_{k=1}^\infty (\Tilde{P}^{(k)} - \mathbb{1}\pi^T) &= D^{-1}T(I - P_{nb} + \mathbb{1}\pi_e^T)^{-1}T^T.
\end{align*}  

Both results rely on the fact that  
\[
\sum_{k=0}^\infty (P_{nb} - \mathbb{1}\pi_e^T)^k = (I - (P_{nb} - \mathbb{1}\pi_e^T))^{-1},
\]  
which holds as the spectral radius $\rho(P_{nb} - \mathbb{1}\pi_e^T) < 1$. This matrix inverse is the fundamental matrix for the non-backtracking random walk on the edge space of $G$.
\end{observation}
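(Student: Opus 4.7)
The plan is to verify each of the two equalities asserted in the observation by assembling three ingredients: (i) the relation from Proposition~\ref{prop: vertex and edge transition prob relation} between $\Tilde P^{(k)}$ and powers of $P_{nb}$, (ii) a few elementary identities among the incidence-like matrices $T, S, D$ and the stationary vectors $\pi, \pi_e$, and (iii) a single algebraic identity for powers of the ``centered'' edge-space operator that lets us swap $\sum(P_{nb}^k - \1\pi_e^\top)$ for a Neumann series in $(P_{nb} - \1\pi_e^\top)$.

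First I would collect the bookkeeping identities $TS = A$, $TT^\top = D$, $T\1 = D\1$, and $\pi_e^\top S = \pi_e^\top T^\top = \pi^\top$, each of which is an immediate entrywise computation from the definitions; these give $D^{-1}TS = P$, $D^{-1}TT^\top = I$, and crucially $D^{-1}T(\1\pi_e^\top)S = D^{-1}T(\1\pi_e^\top)T^\top = \1\pi^\top$, which is how the $\1\pi^\top$ inside the left-hand sum gets reconciled with the $\1\pi_e^\top$ that one wants to insert inside the $T, S$ (or $T, T^\top$) sandwich on the right. Next I would establish by induction on $k$ that $(P_{nb} - \1\pi_e^\top)^k = P_{nb}^k - \1\pi_e^\top$ for all $k \geq 1$; the induction step is immediate from $P_{nb}\1 = \1$, $\pi_e^\top P_{nb} = \pi_e^\top$, and $\pi_e^\top\1 = 1$, which make the three cross terms in the binomial-style expansion collapse onto a single $\1\pi_e^\top$.

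Given these pieces, the two identities fall out by routine manipulation. For the first one, I would split off the $k=1$ summand (using $\Tilde P^{(1)} = P = D^{-1}TS$), apply Proposition~\ref{prop: vertex and edge transition prob relation} to each $k \geq 2$ summand in the form $\Tilde P^{(k)} - \1\pi^\top = D^{-1}T(P_{nb}^{k-1} - \1\pi_e^\top)S$, pull $D^{-1}T(\cdot)S$ out of the sum, substitute the closed form $\sum_{k=1}^\infty (P_{nb} - \1\pi_e^\top)^k = (I - P_{nb} + \1\pi_e^\top)^{-1} - I$, and observe that the $P = D^{-1}TS$ produced by the leading $-I$ inside the brackets cancels the $P$ that was split off at the start. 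The second identity is tidier: for every $k \geq 1$, Proposition~\ref{prop: vertex and edge transition prob relation} gives $\Tilde P^{(k)} - \1\pi^\top = D^{-1}T(P_{nb}^k - \1\pi_e^\top)T^\top$ uniformly, so the sum collapses to $D^{-1}T[(I - P_{nb} + \1\pi_e^\top)^{-1} - I]T^\top$, with the $-D^{-1}TT^\top = -I$ piece being absorbed by the leading $I$ on the left-hand side. The main obstacle is the convergence step $\rho(P_{nb} - \1\pi_e^\top) < 1$, which requires that the edge-space NBRW be not just irreducible but aperiodic---a hypothesis that fails for, e.g., cycles and some bipartite-regular graphs. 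Under a standing ergodicity assumption on $P_{nb}$ the argument is complete; in borderline cases one would need a Ces\`aro interpretation of the series or a restriction on the class of graphs considered.
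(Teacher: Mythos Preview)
Your proposal is correct and follows the same route as the paper: the observation itself is already laid out as a chain of equalities, and you have simply supplied the missing justifications---the bookkeeping identities $TS=A$, $TT^\top=D$, $D^{-1}T(\1\pi_e^\top)S=D^{-1}T(\1\pi_e^\top)T^\top=\1\pi^\top$, and the induction $(P_{nb}-\1\pi_e^\top)^k=P_{nb}^k-\1\pi_e^\top$---that make each displayed step valid. Your added caveat that $\rho(P_{nb}-\1\pi_e^\top)<1$ genuinely requires ergodicity of the edge-space NBRW (and fails, e.g., on cycles) is a worthwhile clarification that the paper leaves implicit.
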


Taking this convergent power series as the definition of the vertex-space NBRW fundamental matrix, we define the following:

\begin{definition}\label{def: NB vertex fund matrix}
    Let $W_v = \mathbb{1}\pi^T$, $W_e = \mathbb{1}\pi_e^T$, and note that $(I - P_{nb} + W_e)^{-1} = Z^{(nb)}_e$. Define the vertex-space fundamental matrix for the non-backtracking random walk on $G$ as  
    \[
    Z^{(nb)}_v = I + D^{-1}TZ^{(nb)}_eS - W_v = D^{-1}TZ^{(nb)}_eT^T.
    \]  
    Additionally, define the \textit{fundamental matrix Kemeny's constant} for a non-backtracking random walk on the vertices of a graph as  
    \[
    \widehat\kemeny^{(nb)}_v(G) = \text{tr}(Z^{(nb)}_v) - 1.
    \]
\end{definition}

Thus, 
$Z^{(nb)}_v$ can be interpreted as a projection of the edge-space fundamental matrix, $Z^{(nb)}_e$, into the vertex space. The case for why this makes sense as a definition for Kemeny's constant in this setting is strengthened by the following fact, the proof of which follows  similarly those of Proposition \ref{prop: vertex and edge transition prob relation} and Observation \ref{obs: series expansion for defining Z}.

\begin{proposition}\label{prop: Z = projection for SRW}
Let $G$ be a graph, and consider the simple random walk on the vertices of $G$ and the edges of $G$. Letting $Z$ and $Z_e$ denote the fundamental matrices calculated using the transition matrices $P$ and $P_e$, it holds that \[
    Z = I + D^{-1}TZ^eS - W_v = D^{-1}TZ^eT^T.
    \]
\end{proposition}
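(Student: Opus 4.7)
The plan is to mirror the derivations of Proposition \ref{prop: vertex and edge transition prob relation} and Observation \ref{obs: series expansion for defining Z}, replacing $P_{nb}$ by $P_e$ throughout. The first step is to establish the SRW analogue of Proposition \ref{prop: vertex and edge transition prob relation}: for every $k \geq 0$,
$$P^k = D^{-1}T P_e^k T^T \qquad \text{and} \qquad P^{k+1} = D^{-1}T P_e^k S.$$
One route is to repeat verbatim the probabilistic argument from the proof of Proposition \ref{prop: vertex and edge transition prob relation}, which nowhere uses the non-backtracking constraint. A more direct route is to verify by entrywise computation the base identities $TT^T = D$, $TS = A$, and $P_e T^T = S$ (this last one coming from the fact that at an edge $(i,j)$, the SRW on edges moves uniformly to one of the $d_j$ out-edges of $j$, exactly one of which lies in column $x$ of $T^T$ whenever $j=x$). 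The base cases $k=0,1$ follow immediately, and for general $k$ the identity $P_e T^T = S$ gives $D^{-1}T P_e^{k+1} T^T = D^{-1}T P_e^k (P_e T^T) = D^{-1}T P_e^k S$, making the two forms agree.

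With this relation in hand, I would invoke the standard series $Z^e = \sum_{k=0}^\infty (P_e - \mathbb{1}\pi_e^T)^k$ and project it via $D^{-1}T(\,\cdot\,)T^T$. Since $P_e$ is row-stochastic with left eigenvector $\pi_e$, one has $(P_e - \mathbb{1}\pi_e^T)^k = P_e^k - \mathbb{1}\pi_e^T$ for all $k \geq 1$, so
$$D^{-1}T Z^e T^T = D^{-1}T T^T + \sum_{k=1}^\infty D^{-1}T (P_e^k - \mathbb{1}\pi_e^T) T^T.$$
Applying $TT^T = D$ to the first term, together with $D^{-1}T\mathbb{1} = \mathbb{1}$ (from $T\mathbb{1} = D\mathbb{1}$) and $\pi_e^T T^T = \pi^T$ to handle the rank-one corrections in the second, this collapses to $I + \sum_{k=1}^\infty (P^k - \mathbb{1}\pi^T) = Z$, which is the right-hand equality. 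A parallel computation projecting via $S$ in place of $T^T$---now using $TS = A$ and $\pi_e^T S = \pi^T$---gives $D^{-1}T Z^e S = P + \sum_{j=2}^\infty (P^j - \mathbb{1}\pi^T)$, from which $I + D^{-1}T Z^e S - W_v = Z$ follows by reassembling the power series for $Z$.

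The only genuinely delicate point is index bookkeeping: the series $\sum_{k=0}^\infty (P_e - \mathbb{1}\pi_e^T)^k$ converges while $\sum_{k=0}^\infty (P_e^k - \mathbb{1}\pi_e^T)$ does not (the $k=0$ term contributes $I$ rather than $I - \mathbb{1}\pi_e^T$), so one must pair the $\mathbb{1}\pi_e^T$ subtractions with the correct powers and keep track of the index shift that distinguishes the $S$-projection from the $T^T$-projection. Beyond that, no new machinery is required: all of the projection identities invoked---$D^{-1}T\mathbb{1} = \mathbb{1}$, $\pi_e^T S = \pi_e^T T^T = \pi^T$, $TT^T = D$, $TS = A$, and $P_e T^T = S$---are one-line entrywise checks, and the SRW case is in fact slightly cleaner than its NBRW counterpart because $P$ is itself a bona fide Markov transition matrix on the vertices, so the identity $P^k = D^{-1}T P_e^k T^T$ holds for all $k \geq 0$ including the trivial $k=0$ case.
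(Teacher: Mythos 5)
Your proposal is correct and takes precisely the route the paper intends: the paper omits a detailed proof, saying only that it ``follows similarly'' to Proposition~\ref{prop: vertex and edge transition prob relation} and Observation~\ref{obs: series expansion for defining Z}, and you have supplied exactly those details, with the projection identities $TT^T = D$, $TS = A$, $P_e T^T = S$, $D^{-1}T\1 = \1$, and $\pi_e^T S = \pi_e^T T^T = \pi^T$ all verifying entrywise as you claim. One small remark: your induction sketch via $P_e T^T = S$ shows the two projected forms agree but does not by itself establish $D^{-1}T P_e^k T^T = P^k$; as you note, the probabilistic argument of Proposition~\ref{prop: vertex and edge transition prob relation} closes this gap, or alternatively one can observe the clean factorization $P_e = S D^{-1} T$ (a one-line entrywise check for the SRW on directed edges), from which $P_e^k = S(D^{-1}TS)^{k-1}D^{-1}T = SP^{k-1}D^{-1}T$ and hence $D^{-1}TP_e^kT^T = P^k$ follows purely algebraically.
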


The proposed definition for $Z^{(nb)}_v$ naturally prompts the question of which properties of the standard fundamental matrix are preserved in this context.

\begin{proposition}\label{prop: properties of Znb}
    For the matrix defined in Definition \ref{def: NB vertex fund matrix}, we have $Z^{(nb)}_v\1 = \1$, $\pi^TZ^{(nb)}_v=\pi^T$, and thus $W_vZ^{(nb)}_v = W_v$.
\end{proposition}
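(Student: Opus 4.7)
The plan is to reduce both claims directly to the analogous (and well known) properties of the edge-space fundamental matrix $Z^{(nb)}_e$, namely $Z^{(nb)}_e\1=\1$ and $\pi_e^\top Z^{(nb)}_e=\pi_e^\top$, by sandwiching $Z^{(nb)}_e$ between the operators $T$ and $T^\top$ as in the second expression of Definition~\ref{def: NB vertex fund matrix}: $Z^{(nb)}_v=D^{-1}TZ^{(nb)}_eT^\top$. The whole proof is then bookkeeping with the ``lift/project'' matrices.

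First I would record four easy identities straight from the entrywise definitions of $T$ and $S$ and from $\pi_i=d_i/(2m)$, $\pi_e=\tfrac{1}{2m}\1$:
\begin{align*}
T^\top \1_n = \1_{2m}, \qquad T\1_{2m} = D\1_n, \qquad \1_n^\top T = \1_{2m}^\top, \qquad \pi^\top D^{-1} = \tfrac{1}{2m}\1_n^\top.
\end{align*}
The first holds because each arc has a unique tail vertex; the second because the number of arcs with a given tail $x$ is $d_x$; the third and fourth are the transposes/variants of these.

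With these in hand, the first claim is a four-step collapse:
\[
Z^{(nb)}_v\1 \;=\; D^{-1}TZ^{(nb)}_eT^\top\1 \;=\; D^{-1}TZ^{(nb)}_e\1 \;=\; D^{-1}T\1 \;=\; D^{-1}D\1 \;=\; \1,
\]
using $T^\top\1=\1$ and then $Z^{(nb)}_e\1=\1$. The second claim is the analogous collapse from the left:
\[
\pi^\top Z^{(nb)}_v \;=\; \pi^\top D^{-1}TZ^{(nb)}_eT^\top \;=\; \tfrac{1}{2m}\1^\top T\,Z^{(nb)}_eT^\top \;=\; \pi_e^\top Z^{(nb)}_eT^\top \;=\; \pi_e^\top T^\top,
\]
and the last expression equals $\pi^\top$ by a direct entrywise computation: $(\pi_e^\top T^\top)_x = \tfrac{1}{2m}\sum_{(i,j)} T(x,(i,j)) = d_x/(2m) = \pi_x$. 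The final statement $W_vZ^{(nb)}_v=W_v$ is then immediate: $W_vZ^{(nb)}_v=\1\pi^\top Z^{(nb)}_v=\1\pi^\top=W_v$.

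There is no real obstacle—everything is linear algebra with $T$, $T^\top$, $D$, and two invariances of $Z^{(nb)}_e$. The only thing to be careful about is keeping the dimensions straight (which $\1$ lives in vertex space versus edge space) and not mis-reading $T$ as projecting on the head rather than the tail of an arc; these are exactly the places where a sign/shape error would slip in.
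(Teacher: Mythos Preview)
Your proof is correct and follows essentially the same approach as the paper: both reduce the claims to the corresponding properties $Z^{(nb)}_e\1=\1$ and $\pi_e^\top Z^{(nb)}_e=\pi_e^\top$ of the edge-space fundamental matrix by pushing the $T$, $T^\top$, $S$, $D^{-1}$ factors through. The only cosmetic differences are that the paper uses the $S$-form of $Z^{(nb)}_v$ for the first identity and unpacks $\pi_e^\top Z^{(nb)}_e=\pi_e^\top$ via the power series for the second, whereas you uniformly use the $T^\top$-form and invoke the left-eigenvector property of $Z^{(nb)}_e$ directly; your version is arguably slightly cleaner.
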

\begin{proof}
    We first show that $Z^{(nb)}_v\1 = \1$. Observe
    \begin{align*}
        Z^{(nb)}_v\1_n &= (I + D^{-1}TZ^{(nb)}_eS - W_v)\1_n \\
        &= D^{-1}TZ^{(nb)}_eS\1_n \\
        &= D^{-1}TZ^{(nb)}_e\1_{2m} \\
        &= D^{-1}T\1_{2m} \\
        &= \1_n.
    \end{align*}

    Next, we show that 
    $\pi^TZ^{(nb)}_v = \pi^T$. Observe
    \begin{align*}
        \pi^TZ^{(nb)}_v &= \pi^TD^{-1}TZ^{(nb)}_eT^T \\
        &= \frac{1}{2m}\1_n^TTZ^{(nb)}_eT^T \\
        &= \frac{1}{2m}\1_{2m}^T(I + \sum_{k=1}^\infty (P_{nb} - W_e)^k)T^T \\
        &= \frac{1}{2m}\1_{2m}^T(I + \sum_{k=1}^\infty(P_{nb}^k - W_e))T^T \\
        &= \frac{1}{2m}\1_{2m}^TT^T \\
        &= \pi^T.
    \end{align*}
\end{proof}

Finally, we compare the computational costs of our two definitions and show that the trace definition is more efficient. Recall from \eqref{eq:Mnb_v_related_to_Mev} that $M^{(nb)}_v = D^{-1}TM^{(nb)}_{ev}$, where the main bottleneck is forming $M^{(nb)}_{ev}$ by solving $n$ linear systems in the edge space. If we assume a $O(k^3)$ cost to invert a $(k\times k)$ matrix, then constructing $M^{(nb)}_v$ is $O(nm^3)$. Left- and right-multiplication by the vector $\pi$ each only cost $O(n)$, so evaluating $\kemeny^{(nb)}_v = \pi^\top M_v^{(nb)}\pi$ remains $O(nm^3)$.

By contrast, the trace definition $\widehat{\kemeny}^{(nb)}_v = \tr{Z^{(nb)}_v} - 1$ requires inverting a \( 2m \times 2m \) matrix and a few matrix multiplications, all of which are bounded by $O(m^3)$. Thus, the trace-based approach reduces the dependence on $n$, yielding an overall cost of $O(m^3)$ and offering significant speedups when $n$ is large.


\subsection{Sufficient Conditions for Consistency}
\label{subsec: 32}

Given the two proposed definitions of Kemeny’s constant for a non-backtracking random walk in the vertex space of a graph, it is natural to ask whether they ever coincide, and for what graphs they do coincide. In this section we prove that this is the case under the assumption of Condition \ref{cond: locally uniform return time}.



We present several lemmas that will be key to proving the main result. The following lemma establishes a commuting relationship between the matrices $W_v, W_e,$ and $D^{-1}T$. This identity will later enable us to interchange summation orders in key matrix products, simplifying the proof of Theorem \ref{thm:M Z relation equation}. 

\begin{lemma}\label{lem: cycling W and Wv}
    For the matrices $D$, $T$, $W_v = \1\pi^T$, and $W_e = \1\pi_e^T$ it holds that \[
    D^{-1}TW_e = W_vD^{-1}T.
    \]
\end{lemma}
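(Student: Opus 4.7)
The plan is to compute both sides directly and show each reduces to the same rank-one matrix. Expanding, the left side is
\[
D^{-1}T W_e = D^{-1} T \mathbf{1}_{2m} \pi_e^\top,
\]
and the right side is
\[
W_v D^{-1} T = \mathbf{1}_n \pi^\top D^{-1} T.
\]
So it suffices to check that $D^{-1} T \mathbf{1}_{2m} = \mathbf{1}_n$ and that $\pi^\top D^{-1} T = \pi_e^\top$; equivalently, both sides should collapse to $\frac{1}{2m}\mathbf{1}_n \mathbf{1}_{2m}^\top$.

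The key observations come from how the lift/project matrices $S$ and $T$ interact with degrees. First, because the $i$-th row of $T$ has a $1$ exactly in the columns indexed by out-edges of $i$, we have $T \mathbf{1}_{2m} = D \mathbf{1}_n$, and therefore $D^{-1} T \mathbf{1}_{2m} = \mathbf{1}_n$. This handles the left side. Second, since $\pi_i = d_i/(2m)$, we have $\pi^\top D^{-1} = \frac{1}{2m} \mathbf{1}_n^\top$. Finally, each column of $T$ (indexed by a directed edge $(i,j)$) contains exactly one nonzero entry, namely a $1$ in row $i$, so $\mathbf{1}_n^\top T = \mathbf{1}_{2m}^\top$. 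Combining these,
\[
\pi^\top D^{-1} T = \tfrac{1}{2m} \mathbf{1}_n^\top T = \tfrac{1}{2m} \mathbf{1}_{2m}^\top = \pi_e^\top,
\]
as needed.

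Assembling the pieces, both $D^{-1} T W_e$ and $W_v D^{-1} T$ equal $\mathbf{1}_n \pi_e^\top = \frac{1}{2m} \mathbf{1}_n \mathbf{1}_{2m}^\top$, which gives the lemma. There is no real obstacle here: this is a bookkeeping check on the definitions of $T$, $D$, $\pi$, and $\pi_e$, and the only thing to be careful about is keeping the vertex-space and edge-space dimensions straight ($\mathbf{1}_n$ versus $\mathbf{1}_{2m}$). The identity $\pi^\top D^{-1} = \frac{1}{2m}\mathbf{1}_n^\top$ is the one conceptual point worth isolating, since it is precisely the statement that projecting the uniform edge-space distribution back to vertices via $T$ and rescaling by degrees recovers the vertex stationary distribution.
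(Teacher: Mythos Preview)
Your proof is correct and takes essentially the same approach as the paper: both arguments show that each side equals the constant matrix $\frac{1}{2m}\mathbf{1}_n\mathbf{1}_{2m}^\top$. The paper does this by an entrywise computation, while you factor out the rank-one structure up front and reduce to the vector identities $D^{-1}T\mathbf{1}_{2m}=\mathbf{1}_n$ and $\pi^\top D^{-1}T=\pi_e^\top$, which is slightly cleaner but not substantively different.
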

\begin{proof}
    Fix $i, j, k \in V$. Then matrix multiplication gives that \begin{align*}
        (W_vD^{-1}T)_{i, (j, k)} &= \sum_{y\in V}\frac{d_i}{2m}(D^{-1}T)_{y, (j, k)} \\
        &= \frac{d_i}{2m}\frac{1}{d_i}\\
        &= \frac{1}{2m}.
    \end{align*}
    Similarly, \begin{align*}
        (D^{-1}TW_e)_{i, (j, k)} &= \sum_{(x, y) \in \hat{E}}(D^{-1}T)_{i, (x, y)}(W_e)_{(x, y), (j, k)} \\
        &= \sum_{y \sim i}\frac{1}{d_i}\frac{1}{2m} \\
        &= \frac{1}{2m}.
    \end{align*}
\end{proof}

The next lemma connects the diagonal matrix $R$ of non-backtracking return times to the edge-to-vertex hitting matrix $M^{(nb)}_{ev}$ and is a direct result of work presented in \cite{fasino2023hitting}.

\begin{lemma}\label{lem: R expression}
    Let $R$ denote the $n\times n$ diagonal matrix of non-backtracking mean return times. We can express $R$ as \[
    R = \mathrm{diag}(J + D^{-1}TP_{nb}M^{(nb)}_{ev}).
    \]
\end{lemma}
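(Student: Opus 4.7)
The plan is to prove the identity entry-wise, by computing $R_{ii}$ via conditioning on the first two steps of the non-backtracking walk started at $i$ and invoking the second-order Markov property on the edge space. First I would write
\[
R_{ii} = \mathbb{E}[\Tilde{T}_i^+ \mid \Tilde{X}_0 = i] = \sum_{j\sim i}\frac{1}{d_i}\sum_{\substack{k\sim j\\ k\neq i}}\frac{1}{d_j-1}\,\mathbb{E}[\Tilde{T}_i^+ \mid \Tilde{X}_0=i,\, \Tilde{X}_1=j,\, \Tilde{X}_2=k],
\]
using that the first step is uniform over the $d_i$ neighbors of $i$ and that the second step is, by the non-backtracking rule, uniform over the $d_j - 1$ neighbors of $j$ other than $i$. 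The outer prefactors $\frac{1}{d_i}$ and $\frac{1}{d_j-1}$ are precisely the entries that, when summed over, produce the $(i,i)$ entry of $D^{-1}TP_{nb}$, so the bulk of the work is to evaluate the inner conditional expectation.

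For the inner expectation I would pass to the shifted walk $Y_t := \Tilde{X}_{t+1}$. Conditional on $\Tilde{X}_0 = i,\Tilde{X}_1=j,\Tilde{X}_2=k$, the process $\{Y_t\}_{t\geq 0}$ is distributionally a non-backtracking random walk with $(Y_0, Y_1) = (j,k)$; moreover, since both $j$ and $k$ differ from $i$, the return time $\Tilde{T}_i^+$ equals $1 + \min\{t\geq 0 : Y_t = i\}$. The expected value of the latter term is, by definition of the edge-to-vertex hitting matrix, exactly $(M^{(nb)}_{ev})_{(j,k), i}$. Therefore
\[
\mathbb{E}[\Tilde{T}_i^+ \mid \Tilde{X}_0=i,\, \Tilde{X}_1=j,\, \Tilde{X}_2=k] = 1 + (M^{(nb)}_{ev})_{(j,k), i}.
\]

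Substituting back and separating the constant term, the ``$1$'' averages to $1 = J_{ii}$, and the remaining double sum is exactly $(D^{-1}TP_{nb}M^{(nb)}_{ev})_{i,i}$ by inspection of how $D^{-1}T$ and $P_{nb}$ act. Since $R$ is diagonal by construction, this identifies $R_{ii}$ with the $(i,i)$ entry of $J + D^{-1}TP_{nb}M^{(nb)}_{ev}$ for every $i$, proving the lemma.

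The main obstacle I expect is the careful accounting of the ``plus one'' in the shift argument: the return time must include the initial step $i\to j$, while $M^{(nb)}_{ev}$ already internally accounts for the step $j\to k$ via its hitting-time convention. Conditioning on two steps rather than one (hence the factor $P_{nb}$ in the final formula, rather than $M^{(nb)}_{ev}$ alone) is what makes this bookkeeping come out cleanly: the edge-space hitting time is lifted from $(j,k)$, which guarantees the starting vertex $j\neq i$ and so avoids any collision with the $\Tilde{T}_i = 0$ convention.
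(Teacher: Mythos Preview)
Your proposal is correct and follows essentially the same approach as the paper. The paper's proof simply quotes the entrywise formula $r_{ii} = 1 + \sum_{j,k} p'_{ij}\,p_{i,j,k}\,m_{(j,k),i}$ from \cite{fasino2023hitting} and observes that it packages into the claimed matrix identity; your two-step conditioning and shift argument derives exactly that formula from first principles, so your write-up is a more self-contained version of the same proof.
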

\begin{proof}
   From \cite[Section 4.2]{fasino2023hitting}, we have
    \[
    r_{ii} = 1 + \sum_{j, k \in V} p'_{ij}p_{i, j, k}m_{(j, k), i}
    \]
    where $p_{i,j, k} = P_{{nb}_{(i, j), (j, k)}}$ and $p'_{ij} = \frac{1}{\deg(i)}$ if $i \sim j$, and 0 otherwise. This gives the claimed matrix equation.  
\end{proof}

Next, we leverage the assumption of Condition \ref{cond: locally uniform return time} to derive a relationship satisfied by the matrix $M^{(nb)}_{ev}$ in terms of well-understood matrices.

\begin{lemma}\label{lem: Mev expression with T^TR}
    Let $G$ satisfy Condition \ref{cond: locally uniform return time}. Then it holds that \[
    M^{(nb)}_{ev} = J_{2m, n} + P_{nb}M^{(nb)}_{ev} - T^TR.
    \]
\end{lemma}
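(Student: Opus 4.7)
The plan is to verify the identity entrywise, splitting into three cases according to the relationship between the column index $v$ and the row index $(i,j)$. As preliminaries, every entry of $J_{2m,n}$ equals $1$; and because $R$ is diagonal and the $((i,j),x)$ entry of $T^T$ is $\delta_{i,x}$, one has $(T^T R)_{(i,j),v} = R_{iv} = r_{ii}\,\delta_{i,v}$. Moreover, from the definition of $P_{nb}$,
\[
(P_{nb} M^{(nb)}_{ev})_{(i,j),v} \;=\; \sum_{\substack{k \sim j \\ k \neq i}} \frac{1}{d_j - 1}\,(M^{(nb)}_{ev})_{(j,k),v}.
\]
What remains is to check equality of both sides in the three cases $v \notin \{i, j\}$, $v = j$, and $v = i$.

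The first two cases do not require Condition \ref{cond: locally uniform return time}. When $v \notin \{i,j\}$, the identity reduces to the standard first-step recursion for hitting times on the edge-space Markov chain, since $(T^T R)_{(i,j),v} = 0$. When $v = j$ (with $j \neq i$), the left side is $1$ by the convention $\Tilde{T}_j = 1$ under $\Tilde{X}_0 = i$, $\Tilde{X}_1 = j$; the product $(P_{nb} M^{(nb)}_{ev})_{(i,j),j}$ vanishes because $(M^{(nb)}_{ev})_{(j,k),j} = 0$ for each admissible $k$ (the walk is already at $j$ when $\Tilde{X}_0 = j$); and $(T^T R)_{(i,j),j} = 0$ as well. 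Both sides thus equal $1$.

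The main obstacle, and the only place the hypothesis enters, is the case $v = i$. Here the left side is $(M^{(nb)}_{ev})_{(i,j),i} = 0$, while the right side collapses to $1 + (P_{nb} M^{(nb)}_{ev})_{(i,j),i} - r_{ii}$, so the goal reduces to showing
\[
(P_{nb} M^{(nb)}_{ev})_{(i,j), i} \;=\; r_{ii} - 1 \quad \text{for every oriented edge } (i,j).
\]
A probabilistic interpretation identifies $(P_{nb} M^{(nb)}_{ev})_{(i,j),i}$ as $\mathbb{E}[\Tilde{T}_i^+ \mid \Tilde{X}_0 = i, \Tilde{X}_1 = j] - 1$: conditioning on the second step $\Tilde{X}_2 = k$ (forced to satisfy $k \neq i$ by non-backtracking) and using shift-invariance of the edge-space chain, each weighted term $(M^{(nb)}_{ev})_{(j,k),i}$ accounts for the expected return time to $i$ minus the initial step already taken. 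By Condition \ref{cond: locally uniform return time}, this conditional return time is independent of $j$, so $(P_{nb} M^{(nb)}_{ev})_{(i,j),i}$ is constant in $j$ for each fixed $i$. Lemma \ref{lem: R expression} then rearranges to $r_{ii} = 1 + \frac{1}{d_i}\sum_{j \sim i} (P_{nb} M^{(nb)}_{ev})_{(i,j),i}$; since every summand takes the same value, that common value must equal $r_{ii} - 1$, completing the case $v = i$ and hence the lemma.
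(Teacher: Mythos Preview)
Your proof is correct and follows essentially the same approach as the paper. The paper introduces an intermediate matrix $Y$ satisfying $M^{(nb)}_{ev}=J+P_{nb}M^{(nb)}_{ev}-Y$ (from the entrywise recursion of \cite{fasino2023hitting}) and then shows $Y=T^TR$ under Condition~\ref{cond: locally uniform return time}; you instead verify the identity directly entrywise in the three cases $v\notin\{i,j\}$, $v=j$, and $v=i$, but the substantive step---interpreting $(P_{nb}M^{(nb)}_{ev})_{(i,j),i}$ as $\mathbb{E}[\tilde T_i^+\mid \tilde X_0=i,\tilde X_1=j]-1$, invoking the condition to make it independent of $j$, and averaging via Lemma~\ref{lem: R expression} to identify it with $r_{ii}-1$---is exactly the same in both arguments.
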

\begin{proof}
    Theorem 4.3 of \cite{fasino2023hitting} gives \[
    (M^{(nb)}_{ev})_{(i, j), k} = \begin{cases}
        0, & \text{if } i = k; \\
        1, & \text{if } j = k \text{ and } i \neq k; \\
        1 + \sum_{\ell\in V}p_{i, j, \ell}(M^{(nb)}_{ev})_{(j, \ell), k}, & \text{if } i, j \neq k.
    \end{cases}
    \]
    In matrix form, this equation gives 
    \begin{equation} \label{eq:Mev_with_Y}
            M^{(nb)}_{ev} = J_{2m, n} + P_{nb}M^{(nb)}_{ev} - Y
    \end{equation}
    for some $2m\times n$ matrix $Y$, where the entries are given by \[
    Y_{(i, j), k} = \begin{cases}
        0, & \text{if } k \neq i; \\
        1 + \sum_{\ell\in V}p_{i, j, \ell}(M^{(nb)}_{ev})_{(j, \ell), i}, & \text{if } k = i.
    \end{cases}
    \]
    Using Lemma \ref{lem: R expression}, notice that \[
    (T^TR)_{(i, j), k} = \begin{cases}
        0, & \text{if } k \neq i; \\
        1 + \sum_{\ell\in V} \sum_{y\in V} p'_{iy}p_{i, y, \ell}(M^{(nb)}_{ev})_{(y, \ell), i}, & \text{if } k=i.
    \end{cases}
    \]
    Notice that $\sum_{\ell\in V}p_{i, y, \ell}(M^{(nb)}_{ev})_{(y, \ell), i}$ represents the average time it takes to return to vertex $i$ given that the chain began with $X_0 = i, X_1 = y$. As such, we can write 
    \[\sum_{\ell\in V}p_{i, y, \ell}(M^{(nb)}_{ev})_{(y, \ell), k} = \E[\Tilde{T}^+_i \mid X_1 = y, X_0 = i] - 1\], where the $-1$ accounts for the fact that the walk in the edge space from  $O_i$ to $I_i$ is one less than its corresponding walk in the vertex space from $i \to i$. By our assumption, \begin{align*}
        (T^TR)_{(i, j), i} &= 1 + \sum_{y\in V}p_{iy}'\sum_{\ell\in V}p_{i, y, \ell}(M^{(nb)}_{ev})_{(y, \ell), i} \\
        &= 1 + \sum_{y\in V}p_{iy}'(\E[\Tilde{T}_i^+ \mid X_1 = j, X_0=i]-1) \\
        &= 1+ \E[\Tilde{T}_i^+ \mid X_1 = j, X_0=i]-1 \\
        &= 1 + \sum_{\ell\in V} p_{i, j, \ell}(M^{(nb)}_{ev})_{(j, \ell), i}.
    \end{align*}
    This shows that $T^TR = Y$ and completes the proof.
\end{proof}


The next result resolves a technical challenge: commuting operators between vertex- and edge-space fundamental matrices ($Z^{(nb)}$ and $Z^{(nb)}_e$). This commutation is made possible through Lemma \ref{lem: Mev expression with T^TR} and is essential for bridging vertex- and edge-centric formulations of Kemeny's constant.

\begin{lemma}\label{lem: Znb and Znb_e 'commuting'}
Let $G$ satisfy Condition \ref{cond: locally uniform return time}. Then \[
    Z^{(nb)}D^{-1}T(I - P_{nb})M^{(nb)}_{ev} = D^{-1}TZ^{(nb)}_e(I - P_{nb})M^{(nb)}_{ev}
\]    
\end{lemma}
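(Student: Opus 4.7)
The key observation is that Lemma \ref{lem: Mev expression with T^TR} rearranges to $(I-P_{nb})M^{(nb)}_{ev} = J_{2m,n} - T^T R$, which turns the lemma into the assertion that
\[
Z^{(nb)}_v\, D^{-1}T\,(J_{2m,n} - T^T R) \;=\; D^{-1}T\,Z^{(nb)}_e\,(J_{2m,n} - T^T R),
\]
where I am reading the ``$Z^{(nb)}$'' in the statement as $Z^{(nb)}_v$. My plan is to show that both sides reduce to the same expression $J_{n,n} - Z^{(nb)}_v R$ via routine identities.

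The ingredients I will use are: (i) $T T^T = D$ (immediate from the definitions of $T$ and $D$), (ii) $T\1_{2m} = d$ where $d = D\1_n$ is the degree vector, so $D^{-1}T\1_{2m} = \1_n$ and consequently $D^{-1}T\,J_{2m,n} = J_{n,n}$, (iii) the standard fundamental-matrix identity $Z^{(nb)}_e \1_{2m} = \1_{2m}$, which gives $Z^{(nb)}_e J_{2m,n} = J_{2m,n}$, (iv) the defining identity $Z^{(nb)}_v = D^{-1}T Z^{(nb)}_e T^T$ from Definition \ref{def: NB vertex fund matrix}, and (v) $Z^{(nb)}_v \1_n = \1_n$ from Proposition \ref{prop: properties of Znb}, which yields $Z^{(nb)}_v J_{n,n} = J_{n,n}$.

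For the left-hand side, distributing and using (ii) gives $D^{-1}T(J_{2m,n} - T^T R) = J_{n,n} - D^{-1}(TT^T)R = J_{n,n} - R$ by (i); then applying $Z^{(nb)}_v$ and using (v) yields $Z^{(nb)}_v J_{n,n} - Z^{(nb)}_v R = J_{n,n} - Z^{(nb)}_v R$. For the right-hand side, I push $Z^{(nb)}_e$ through: by (iii), $Z^{(nb)}_e J_{2m,n} = J_{2m,n}$, and by (iv), $D^{-1}T Z^{(nb)}_e T^T R = Z^{(nb)}_v R$; combining with (ii) gives $D^{-1}T J_{2m,n} - Z^{(nb)}_v R = J_{n,n} - Z^{(nb)}_v R$. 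The two sides match.

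There is no serious obstacle here once Lemma \ref{lem: Mev expression with T^TR} is in hand — the entire argument is linear-algebraic bookkeeping built on the compatibility relations between the lifting operator $S$, the projection $T$, the degree matrix $D$, and the $\1$-preserving property of both fundamental matrices. The only subtle point is verifying that the mild notational clash (the statement writes $Z^{(nb)}$ with no subscript) should be interpreted as $Z^{(nb)}_v$, since that is the matrix that makes the identity $D^{-1}T Z^{(nb)}_e T^T = Z^{(nb)}_v$ close the argument on the right-hand side. The role of Condition \ref{cond: locally uniform return time} enters only through Lemma \ref{lem: Mev expression with T^TR}; everything else uses no assumption on $G$.
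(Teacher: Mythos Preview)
Your proof is correct and, in fact, cleaner than the paper's. You invoke Lemma \ref{lem: Mev expression with T^TR} at the outset to replace $(I-P_{nb})M^{(nb)}_{ev}$ by $J_{2m,n}-T^T R$, after which both sides collapse to $J_{n,n}-Z^{(nb)}_v R$ via the elementary identities $TT^T=D$, $D^{-1}TJ_{2m,n}=J_{n,n}$, $Z^{(nb)}_e\1=\1$, $Z^{(nb)}_v\1=\1$, and the defining relation $Z^{(nb)}_v=D^{-1}TZ^{(nb)}_eT^T$. The paper instead expands $Z^{(nb)}_v$ via its other form $I-W_v+D^{-1}TZ^{(nb)}_eS$, uses the edge-space identity $SD^{-1}T=(I-D_e^{-1})P_{nb}+D_e^{-1}\tau$ (imported from \cite{breen2023kemeny}), and reduces the claim to the equivalent condition $(\tau-P_{nb})(I-P_{nb})M^{(nb)}_{ev}=0$, which is then checked entrywise using Lemma \ref{lem: Mev expression with T^TR}. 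Your route avoids $\tau$, $D_e$, and the external lemma entirely, which is a genuine simplification. The trade-off is that the paper's detour isolates exactly the obstruction term $(\tau-P_{nb})(I-P_{nb})M^{(nb)}_{ev}$, which becomes the matrix $F$ driving the error analysis in Section \ref{sec:gap}; your argument, while shorter, does not surface that structure. Your reading of the unsubscripted $Z^{(nb)}$ as $Z^{(nb)}_v$ is correct and consistent with how the paper uses it downstream in Theorem \ref{thm:M Z relation equation}.
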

\begin{proof}
    To reduce clutter, define $X = (I - P_{nb})M^{(nb)}_{ev}$. Expand the left hand side as \begin{align*}
        Z^{(nb)}D^{-1}TX &= [I - W_v + D^{-1}TZ^{(nb)}_eS]D^{-1}TX \\
        &= [(I-W_v)D^{-1}T + D^{-1}TZ^{(nb)}_eSD^{-1}T]X \\
        &= D^{-1}T[I - W_e + Z^{(nb)}_e((I-D_e^{-1})P_{nb} + D_e^{-1}\tau)]X.
    \end{align*}
    Note that the last equality comes from observing that \[
    SD^{-1}T = D_{e}^{-1}ST = D_e^{-1}[(D_e - I)P_{nb} + \tau] = (I - D_e^{-1})P_{nb} + D_e^{-1}\tau,
    \]
    which is justified through Lemma 2.8 of \cite{breen2023kemeny} and manipulating definitions. Then, the desired statement is true if and only if \[
    D^{-1}T[I - W_e + Z^{(nb)}_e((I-D_e^{-1})P_{nb} + D_e^{-1}\tau)]X = D^{-1}TZ^{(nb)}_eX.
    \]
    We will show that \[
    [I - W_e + Z^{(nb)}_e((I - D_e^{-1}) + D_e^{-1}\tau)]X = Z^{(nb)}_eX
    \]
    which will give the result. Multiplying by $(Z^{(nb)}_e)^{-1}$ we see that the above equation is equivalent to \begin{align*}
        [(Z^{(nb)}_e)^{-1}(I - W_e) + (I - D_e^{-1})P_{nb} + D_e^{-1}\tau]X &= X \\
        [(I - P_{nb}) + (I - D_e^{-1})P_{nb} + D_e^{-1}\tau]X &= X \\
        [I - D_e^{-1}P_{nb} + D_e^{-1}\tau]X &= X \\
        (\tau - P_{nb})X &= 0 \\
        (\tau - P_{nb})(I - P_{nb})M^{(nb)}_{ev} &= 0.
    \end{align*}
    Now using Lemma \ref{lem: Mev expression with T^TR} in the above statement gives \begin{align*}
        (\tau - P_{nb})(I - P_{nb})M^{(nb)}_{ev} &= (\tau - P_{nb})(J - T^TR) \\
        &= -\tau T^TR + P_{nb}T^TR.
    \end{align*}
    We show that this is equal to $0$. Straightforward matrix multiplication gives \begin{align*}
        (\tau T^TR)_{(i, j), v} &= \begin{cases}
            R_{jj}, & \text{if } j = v; \\
            0, & \text{if } j \neq v;
        \end{cases} \\
        (P_{nb}T^TR)_{(i, j), v} &= \begin{cases}
            R_{jj}, & \text{if } j = v; \\
            0, & \text{if } j \neq v.
        \end{cases}
    \end{align*}
    Thus $(\tau - P_{nb})(I - P_{nb})M^{(nb)}_{ev} = 0$ and we have the desired result.
\end{proof}

We are now ready for the main result, which is inspired by Theorem 11.16 from \cite{grinstead1997introduction}.

\begin{theorem}\label{thm:M Z relation equation}
    Let $G$ be such that Condition \ref{cond: locally uniform return time} holds. Then
    \[
    M_{v}^{nb} = J - Z^{(nb)}R + W_vM^{(nb)}_v.
    \]
    In particular, 
    \[
    m_{ij} = \frac{z_{jj} - z_{ij}}{\pi_j}.
    \]
\end{theorem}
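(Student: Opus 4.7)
The plan is to mirror the classical Grinstead--Snell derivation (their Theorem 11.16) by taking the edge-space recursion from Lemma~\ref{lem: Mev expression with T^TR} and projecting it down to the vertex space. Rewrite that lemma as $(I - P_{nb}) M^{(nb)}_{ev} = J_{2m,n} - T^T R$ and left-multiply by $Z^{(nb)}_e$. The identity $Z^{(nb)}_e(I - P_{nb}) = I - W_e$ follows exactly as in the SRW case from $Z^{(nb)}_e \1 = \1$ and $\pi_e^T Z^{(nb)}_e = \pi_e^T$ (which together give $Z^{(nb)}_e W_e = W_e$); combined with $Z^{(nb)}_e J_{2m,n} = J_{2m,n}$ this yields
\[
(I - W_e) M^{(nb)}_{ev} = J_{2m,n} - Z^{(nb)}_e T^T R.
\]

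Next, left-multiply by $D^{-1} T$. On the left-hand side, Lemma~\ref{lem: cycling W and Wv} lets me commute $D^{-1}T$ past $W_e$, so together with \eqref{eq:Mnb_v_related_to_Mev} the left reduces to $(I - W_v) M^{(nb)}_v$. On the right, $D^{-1} T J_{2m,n} = J$ (since $T\1_{2m}$ is the degree vector) and $D^{-1} T Z^{(nb)}_e T^T = Z^{(nb)}_v$ by Definition~\ref{def: NB vertex fund matrix}. Putting these together gives
\[
(I - W_v) M^{(nb)}_v = J - Z^{(nb)}_v R,
\]
which rearranges to the claimed matrix identity.

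For the entrywise statement, the $(i,j)$ entry of the matrix identity reads $m_{ij} = 1 - z_{ij}/\pi_j + (W_v M^{(nb)}_v)_{ij}$, and $(W_v M^{(nb)}_v)_{ij} = (\pi^T M^{(nb)}_v)_j$ depends only on $j$. Specializing to $i=j$ and using the convention $m_{jj} = 0$ pins this constant down as $z_{jj}/\pi_j - 1$; substituting back for arbitrary $i$ recovers $m_{ij} = (z_{jj} - z_{ij})/\pi_j$.

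I expect the only real obstacle to be dimensional bookkeeping---tracking which $J$ and $\1$ live in $\mathbb{R}^{2m}$ versus $\mathbb{R}^n$, and carefully invoking the SRW-style identities for $Z^{(nb)}_e$ at each step. Condition~\ref{cond: locally uniform return time} enters the argument only through Lemma~\ref{lem: Mev expression with T^TR}; once that recursion is in hand, the rest is formal manipulation of fundamental matrices, entirely parallel to the classical SRW derivation.
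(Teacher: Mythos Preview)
Your proof is correct, but it takes a genuinely different route from the paper's, and in fact a more economical one. The paper first projects the edge-space recursion down to the vertex space (left-multiplying by $D^{-1}T$ to get $D^{-1}T(I-P_{nb})M^{(nb)}_{ev} = J - R$), then left-multiplies by the \emph{vertex-space} fundamental matrix $Z^{(nb)}_v$, and at that point must invoke Lemma~\ref{lem: Znb and Znb_e 'commuting'} to pass $Z^{(nb)}_v$ through $D^{-1}T$ and replace it with $Z^{(nb)}_e$. You instead multiply by the \emph{edge-space} fundamental matrix $Z^{(nb)}_e$ first and project afterward, so the identity $D^{-1}TZ^{(nb)}_eT^T = Z^{(nb)}_v$ from Definition~\ref{def: NB vertex fund matrix} delivers the $Z^{(nb)}_vR$ term directly, with no commutation lemma needed. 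Your order of operations therefore bypasses Lemma~\ref{lem: Znb and Znb_e 'commuting'} entirely, and Condition~\ref{cond: locally uniform return time} enters only through Lemma~\ref{lem: Mev expression with T^TR}, exactly as you note. The paper's route has the virtue of isolating the commuting relation as a standalone result (which is reused in the general analysis of Section~\ref{sec:gap}), but for the purpose of proving this theorem alone your argument is shorter and cleaner. One small point: when you read off the $(i,j)$ entry you silently use $r_j = 1/\pi_j$; the paper cites \cite[Theorem~5.5]{fasino2023hitting} for this, and you should too.
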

\begin{proof}
    By Lemma \ref{lem: Mev expression with T^TR} we have
    \begin{align*}
        M^{(nb)}_{ev} &= J_{2m, n} + P_{nb}M^{(nb)}_{ev} - T^TR\\
        D^{-1}TM^{(nb)}_{ev} &= D^{-1}T[J_{2m, n} + P_{nb}M^{(nb)}_{ev} - T^TR]\\
        D^{-1}TM^{(nb)}_{ev} &= J_{n,n} + D^{-1}TP_{nb}M^{(nb)}_{ev} - R\\
        D^{-1}T(I - P_{nb})M^{(nb)}_{ev} &= J - R\\
        Z^{(nb)}D^{-1}T(I - P_{nb})M^{(nb)}_{ev} &= J - Z^{(nb)}R\\
        D^{-1}TZ_{e}^{nb}(I - P_{nb})M^{(nb)}_{ev} &= J - Z^{(nb)}R &\text{(Lemma \ref{lem: Znb and Znb_e 'commuting'})}\\
        D^{-1}T(I - W_e)M^{(nb)}_{ev} &= J - Z^{(nb)}R &\text{(Markov Chain fact)}\\
        (I - W_v)D^{-1}TM^{(nb)}_{ev} &= J - Z^{(nb)}R &\text{(Lemma \ref{lem: cycling W and Wv})}\\
        M^{(nb)}_v &= J - Z^{(nb)}R + W_vM^{(nb)}_v.
    \end{align*}
    From this equation we see that
    \begin{equation}\label{eq:mij relation}
    m_{ij} = 1 - z_{ij}r_j + (\pi^TM^{(nb)}_v)_{j}.
    \end{equation}
    Also, $m_{ii} = 0$ gives 
    \begin{equation*}
    0 = 1 - z_{ii}r_i + (\pi^TM^{(nb)}_v)_i,
    \end{equation*}
    hence
    \begin{equation}\label{eq:mii relation}
        (\pi^TM^{(nb)}_v)_i = z_{ii}r_i - 1.
    \end{equation}
    Combining equations \eqref{eq:mij relation} and \eqref{eq:mii relation} gives
    \[
    m_{ij} = (z_{jj} - z_{ij})r_j.
    \]
    For the non-backtracking random walk, it also holds that $r_j = 1/\pi_j$ (see \cite[Theorem 5.5]{fasino2023hitting}) which finishes the proof. 
\end{proof}

\begin{corollary}\label{cor:Z Kemeny edge-transitive}
    Let $G$ satisfy Condition \ref{cond: locally uniform return time}. Then
    \[
    \pi^TM^{(nb)}_v\pi = \tr{Z^{(nb)}} - 1.
    \]
\end{corollary}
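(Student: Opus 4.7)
The plan is to derive the corollary as a direct consequence of Theorem~\ref{thm:M Z relation equation}, whose entrywise conclusion $m_{ij} = (z_{jj} - z_{ij})/\pi_j$ already encodes exactly the relationship needed between the first-hitting-time Kemeny's constant and the trace of the vertex-space fundamental matrix. The only work is to plug this formula into the quadratic form $\pi^T M^{(nb)}_v \pi$ and collapse the resulting double sum using the basic identities of $Z^{(nb)}_v$ established in Proposition~\ref{prop: properties of Znb}.

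Concretely, I would expand
\[
\pi^T M^{(nb)}_v \pi = \sum_{i,j} \pi_i \pi_j m_{ij}
= \sum_{i,j} \pi_i \pi_j \cdot \frac{z_{jj} - z_{ij}}{\pi_j}
= \sum_{i,j} \pi_i (z_{jj} - z_{ij}),
\]
where the $\pi_j$'s cancel thanks to the appearance of $r_j = 1/\pi_j$ in Theorem~\ref{thm:M Z relation equation}. Splitting the sum gives
\[
\pi^T M^{(nb)}_v \pi = \Bigl(\sum_i \pi_i\Bigr)\sum_j z_{jj} \;-\; \sum_i \pi_i \Bigl(\sum_j z_{ij}\Bigr).
\]
The first factor $\sum_i \pi_i = 1$ reduces the first term to $\tr(Z^{(nb)}_v)$. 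For the second term, the identity $Z^{(nb)}_v \1 = \1$ from Proposition~\ref{prop: properties of Znb} gives $\sum_j z_{ij} = 1$ for every $i$, so that the whole second term collapses to $\sum_i \pi_i = 1$. This yields $\pi^T M^{(nb)}_v \pi = \tr(Z^{(nb)}_v) - 1$, which is the claim.

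An equivalent matrix-level route, if one prefers, is to write Theorem~\ref{thm:M Z relation equation} as $M^{(nb)}_v = (J Z^{(nb)}_{v,\,dg} - Z^{(nb)}_v) R$ with $Z^{(nb)}_{v,\,dg}$ the diagonal part of $Z^{(nb)}_v$, and then use $R\pi = \1$ (from $r_j = 1/\pi_j$), $\pi^T J = \1^T$, and $Z^{(nb)}_v\1 = \1$ in succession. Since the previous theorem has done all the structural work, there is essentially no obstacle here; the only subtlety worth flagging is that the cancellation of $\pi_j$ depends crucially on the non-backtracking mean return time being $1/\pi_j$ (cited in the proof of Theorem~\ref{thm:M Z relation equation}), which is what makes the quadratic form collapse so cleanly and is ultimately why Condition~\ref{cond: locally uniform return time} is what forces the two candidate definitions to agree.
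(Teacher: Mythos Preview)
Your proposal is correct and follows essentially the same route as the paper: both invoke the entrywise formula $m_{ij}=(z_{jj}-z_{ij})/\pi_j$ from Theorem~\ref{thm:M Z relation equation}, cancel $\pi_j$ against $r_j=1/\pi_j$, and then collapse the sums using $Z^{(nb)}_v\1=\1$ and $\sum_i\pi_i=1$. The only cosmetic difference is that the paper first fixes $i$ and shows $\sum_j m_{ij}\pi_j=\tr(Z^{(nb)})-1$ is independent of $i$ before averaging, whereas you go straight to the double sum; the underlying computation is identical.
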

\begin{proof}
    Consider the sum
    \begin{align*}
        \sum_{j\in V}m_{ij}\pi_j &=\sum_{j\in V}(z_{jj} - z_{ij})r_j\pi_j\\
        &= \sum_{j\in V}(z_{jj} - z_{ij}) &\text{(by Theorem \ref{thm:M Z relation equation})}\\
        &= \sum_{j\in V}z_{jj} - 1 &\text{(since $Z^{(nb)}\1 = \1$)}\\
        &= \tr{Z^{(nb)}} - 1.
    \end{align*}
    As this did not depend on the choice of $i\in V$, we get
    \[
    \sum_{i\in V}\sum_{j\in V}\pi_im_{ij}\pi_j = \left(\tr{Z^{(nb)}} - 1\right)\sum_{i\in V} \pi_i = \tr{Z^{(nb)}} - 1.
    \]
\end{proof}

Corollary \ref{cor:Z Kemeny edge-transitive} shows that the two proposed definitions for Kemeny's constant for the non-backtracking random walk in the vertex space collapse conveniently into a single expression under the assumption that $G$ has locally uniform return times. It is of interest to explore how far away these definitions may drift in general, and this is explored in the next section.

\subsection{Characterizing the gap for general graphs}\label{sec:gap}

In general, the non-backtracking random walk on the vertices of an arbitrary connected graph \( G \) does not exhibit locally uniform return times (see Condition~\ref{cond: locally uniform return time}). Nevertheless, it remains of interest to examine the proposed formulations of the NBRW Kemeny’s constant $\kemeny^{(nb)}_v(G)$ and $\widehat\kemeny^{(nb)}_v(G)$. However, it is not immediately clear how far these values may deviate from one another. In this section, we define an error matrix that directly connects the two formulations, \( \pi^\top M_v^{(nb)} \pi \) and \( \mathrm{tr}(Z_v^{(nb)}) - 1 \). While we do not derive explicit bounds on this gap, we view this construction as an important step toward understanding the numerical discrepancy between the two expressions.

The methods in this section build on the results of Lemmas~\ref{lem: Mev expression with T^TR} and~\ref{lem: Znb and Znb_e 'commuting'}, as well as Theorem~\ref{thm:M Z relation equation}, but now consider graphs \( G \) that may not satisfy Condition~\ref{cond: locally uniform return time}.

First, recall that in the proof of Lemma~\ref{lem: Mev expression with T^TR} we had the relation 
\begin{equation} \label{eq:Mev_expression_with_Y}
    M^{(nb)}_{ev} = J_{2m,n} + P_{nb}M^{(nb)}_{ev} - Y
\end{equation}
for the matrix $Y$ given by 
\begin{equation} \label{eq:Y_matrix}
    Y_{(i, j), k} = \begin{cases}
    0, & \text{if } k\neq i; \\
    1 + \sum_{\ell\in V}p_{i, j, \ell}(M^{(nb)}_{ev})_{({j}, \ell), i}, & \text{if } k = i.
\end{cases}
\end{equation}
When $G$ had locally uniform return times, then we saw that $Y$ was equal to $T^TR$, but this is not satisfied for a general $G$. With this in mind, consider the next lemma, which is an analogue of Lemma~\ref{lem: Znb and Znb_e 'commuting'}.

\begin{lemma} \label{lem: Znb and Znb_e 'commuting' with error}
    For a graph $G$, 
    \[
    Z^{(nb)}D^{-1}T(I-P_{nb})M^{(nb)}_{ev} = D^{-1}TZ^{(nb)}_e(I-P_{nb})M^{(nb)}_{ev} + D^{-1}TZ^{(nb)}_eD_{e}^{-1}F
    \]
    where
    \begin{equation} \label{eq:F_matrix}
        F := -(\tau - P_{nb})Y.
    \end{equation}
\end{lemma}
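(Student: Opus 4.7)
The plan is to follow the argument of Lemma~\ref{lem: Znb and Znb_e 'commuting'} almost verbatim, but without invoking Condition~\ref{cond: locally uniform return time}, and instead to track carefully the residual term that Condition~\ref{cond: locally uniform return time} would have caused to vanish. That residual is exactly what we want to identify as $D^{-1}TZ^{(nb)}_eD_e^{-1}F$.

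Concretely, set $X = (I-P_{nb})M^{(nb)}_{ev}$ as before, expand $Z^{(nb)}D^{-1}TX$ using the definition $Z^{(nb)} = I - W_v + D^{-1}TZ^{(nb)}_eS$, then apply Lemma~\ref{lem: cycling W and Wv} to push $W_v$ through $D^{-1}T$, and use the identity $SD^{-1}T = (I-D_e^{-1})P_{nb} + D_e^{-1}\tau$. After left-multiplying the resulting equation by the invertible $Z^{(nb)}_e$, and simplifying via $(Z^{(nb)}_e)^{-1}(I - W_e) = I - P_{nb}$, the task reduces to showing the equality
\[
D^{-1}T Z^{(nb)}_e D_e^{-1}(\tau - P_{nb})X \;=\; D^{-1}T Z^{(nb)}_e D_e^{-1} F,
\]
that is, essentially $(\tau - P_{nb})X = F$ up to the $D^{-1}TZ^{(nb)}_eD_e^{-1}$ prefix. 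This is where the proof of Lemma~\ref{lem: Znb and Znb_e 'commuting'} used Condition~\ref{cond: locally uniform return time} to conclude both sides were zero; here we instead keep the term.

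To finish, I would substitute the expression $(I-P_{nb})M^{(nb)}_{ev} = J_{2m,n} - Y$ coming from equation~(\ref{eq:Mev_expression_with_Y}), giving
\[
(\tau - P_{nb})X = (\tau - P_{nb})J_{2m,n} - (\tau - P_{nb})Y.
\]
The first term vanishes because both $\tau$ and $P_{nb}$ have row sums equal to one (each row of $\tau$ has a single $1$ at the reversed-edge entry, and $P_{nb}$ is row-stochastic), so $\tau J_{2m,n} = P_{nb}J_{2m,n} = J_{2m,n}$. Hence $(\tau - P_{nb})X = -(\tau - P_{nb})Y = F$, as desired.

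I don't expect a genuine obstacle: all the algebraic identities needed (the decomposition of $SD^{-1}T$, the commutation $W_vD^{-1}T = D^{-1}TW_e$, and the Markov-chain identity $(Z^{(nb)}_e)^{-1}(I-W_e) = I - P_{nb}$) are already in hand from the preceding lemmas. The only care required is bookkeeping: in the proof of Lemma~\ref{lem: Znb and Znb_e 'commuting'}, the term $(\tau - P_{nb})Y$ was shown to be zero under Condition~\ref{cond: locally uniform return time}; in the general case that term is precisely $-F$, and the factor $D_e^{-1}$ appears naturally because $SD^{-1}T$ carries a $D_e^{-1}$ in its decomposition. Keeping these pieces aligned through the computation yields the stated identity.
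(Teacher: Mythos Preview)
Your proposal is correct and follows essentially the same route as the paper: both arguments reduce the identity to showing $(\tau - P_{nb})X = F$ via the same algebraic chain (expansion of $Z^{(nb)}$, the identity $SD^{-1}T = (I-D_e^{-1})P_{nb}+D_e^{-1}\tau$, Lemma~\ref{lem: cycling W and Wv}, and the Markov-chain identity for $Z^{(nb)}_e$), with the paper running the chain bottom-up from $(\tau - P_{nb})X = F$ while you run it top-down. Your explicit verification that $(\tau - P_{nb})X = F$ via $X = J - Y$ and row-stochasticity of $\tau$ and $P_{nb}$ is a detail the paper leaves implicit; note only that where you write ``left-multiplying by the invertible $Z^{(nb)}_e$'' you presumably mean $(Z^{(nb)}_e)^{-1}$, consistent with the identity $(Z^{(nb)}_e)^{-1}(I-W_e)=I-P_{nb}$ you invoke.
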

\begin{proof}
    In a similar manner to the proof of Lemma~\ref{lem: Znb and Znb_e 'commuting'}, define $X = (I - P_{nb})M^{(nb)}_{ev}$ for readability. Then observe:
    \begin{align*}
        (\tau - P_{nb})X &= F \\
        D_e^{-1}\tau X - D_e^{-1}P_{nb}X &= D_e^{-1}F \\
        D_e^{-1}\tau X - D_e^{-1}P_{nb}X + P_{nb}X &= D_e^{-1}F + P_{nb}X  \\
        D_e^{-1}\tau X + (I - D_e^{-1})P_{nb}X &= D_e^{-1}F + P_{nb}X \\
        D_e^{-1}\tau X + (I - D_e^{-1})P_{nb}X + (I-P_{nb})X &= D_e^{-1}F + P_{nb}X + (I - P_{nb})X \\
        D_e^{-1}\tau X + (I - D_e^{-1})P_{nb}X + (I-P_{nb})X &= D_e^{-1}F + X \\
        Z^{(nb)}_e[D_e^{-1}\tau X + (I - D_e^{-1})P_{nb}X + (I-P_{nb})X] &= Z^{(nb)}_e[D_e^{-1}F + X] \\
        Z^{(nb)}_e[D_e^{-1}\tau X + (I - D_e^{-1})P_{nb}X] + (I - W_{nb})X &= Z^{(nb)}_e[D_e^{-1}F + X] \\
        Z^{(nb)}D^{-1}TX &= D^{-1}TZ^{(nb)}_eX + D^{-1}TZ^{(nb)}_eD_e^{-1}F
    \end{align*}
    where the last line comes from reversing the chain of equalities from the first few lines of the proof of Lemma~\ref{lem: Znb and Znb_e 'commuting'}.
\end{proof}

\noindent
In comparison to Lemma~\ref{lem: Znb and Znb_e 'commuting'}, the additional term 
\[
E := D^{-1}TZ^{(nb)}_eD_{e}^{-1}F
\]
quantifies the deviation from equality. When \( G \) has locally uniform return times, we have \( F = 0 \), and thus \( E = 0 \), recovering the exact relationship from the earlier lemma.

The next result mirrors Theorem~\ref{thm:M Z relation equation}, but explicitly accounts for the newly defined error matrix $E$.

\begin{theorem} \label{thm:M_Z_relation_eqn_error}
    Let $G$ be a graph. Then 
    \[
    M^{(nb)}_v = J_{n,n} - Z^{(nb)}R + W_vM^{(nb)}_v - E
    \]
    where $E = D^{-1}TZ^{(nb)}_eD_e^{-1}F$ and $F$ is given in \eqref{eq:F_matrix}. In particular, 
    \[
    m_{ij} = \frac{z_{jj} - z_{ij}}{\pi_j} + E_{jj} - E_{ij}
    \]
\end{theorem}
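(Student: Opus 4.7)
The plan is to mirror the proof of Theorem~\ref{thm:M Z relation equation} step by step, replacing the specialization $Y = T^T R$ (which relied on Condition~\ref{cond: locally uniform return time}) with the general relation~\eqref{eq:Mev_expression_with_Y}, and substituting Lemma~\ref{lem: Znb and Znb_e 'commuting' with error} in place of Lemma~\ref{lem: Znb and Znb_e 'commuting'} so that the correction term $E = D^{-1}TZ^{(nb)}_e D_e^{-1} F$ is tracked through each manipulation.

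First, I would start from $M^{(nb)}_{ev} = J_{2m,n} + P_{nb} M^{(nb)}_{ev} - Y$ and left-multiply by $D^{-1}T$. The key preliminary observation is that $D^{-1}TY = R$ continues to hold for an arbitrary graph, not only those satisfying Condition~\ref{cond: locally uniform return time}. This follows because $Y_{(i,j),k}$ is supported on $k = i$, so $D^{-1}TY$ is diagonal, and averaging the nonzero entries over $j$ with weights $1/d_x$ collapses the conditional expectations $\mathbb{E}[\Tilde{T}^+_x \mid \Tilde{X}_0 = x, \Tilde{X}_1 = j]$ to the unconditional return time $R_{xx}$ by the tower property. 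With this identity in hand, rearrangement yields $D^{-1}T(I - P_{nb})M^{(nb)}_{ev} = J_{n,n} - R$.

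Next I would left-multiply both sides by $Z^{(nb)}$. Using $Z^{(nb)}\mathbf{1} = \mathbf{1}$ from Proposition~\ref{prop: properties of Znb}, the right-hand side becomes $J_{n,n} - Z^{(nb)}R$. On the left, Lemma~\ref{lem: Znb and Znb_e 'commuting' with error} replaces $Z^{(nb)}D^{-1}T(I-P_{nb})M^{(nb)}_{ev}$ with $D^{-1}TZ^{(nb)}_e(I-P_{nb})M^{(nb)}_{ev} + E$, giving
\[
D^{-1}TZ^{(nb)}_e(I-P_{nb})M^{(nb)}_{ev} = J_{n,n} - Z^{(nb)}R - E.
\]
Applying the Markov-chain identity $Z^{(nb)}_e(I - P_{nb}) = I - W_e$ and then Lemma~\ref{lem: cycling W and Wv} to move $D^{-1}T$ past $W_e$ produces $(I - W_v)M^{(nb)}_v = J_{n,n} - Z^{(nb)}R - E$, which rearranges directly to the claimed matrix identity.

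For the entrywise formula, I would read off the $(i,j)$ entry as $m_{ij} = 1 - z_{ij} r_j + (\pi^T M^{(nb)}_v)_j - E_{ij}$, noting that $(W_v M^{(nb)}_v)_{ij}$ is independent of $i$. Specializing to $i = j$ and using $m_{jj} = 0$ gives $(\pi^T M^{(nb)}_v)_j = z_{jj} r_j - 1 + E_{jj}$, and substituting back yields $m_{ij} = (z_{jj} - z_{ij}) r_j + E_{jj} - E_{ij}$; invoking $r_j = 1/\pi_j$ from \cite[Theorem~5.5]{fasino2023hitting} finishes the proof. The main obstacle is really the general-graph verification that $D^{-1}TY = R$, since without it the very first reduction breaks down; fortunately the diagonal support structure of $Y$ combined with first-step conditioning handles it cleanly, so essentially all of the extra work beyond Theorem~\ref{thm:M Z relation equation} is already absorbed into Lemma~\ref{lem: Znb and Znb_e 'commuting' with error}.
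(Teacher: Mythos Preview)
Your proposal is correct and follows essentially the same route as the paper's proof: start from \eqref{eq:Mev_expression_with_Y}, hit with $D^{-1}T$, premultiply by $Z^{(nb)}$, swap via Lemma~\ref{lem: Znb and Znb_e 'commuting' with error}, then use $Z^{(nb)}_e(I-P_{nb})=I-W_e$ and Lemma~\ref{lem: cycling W and Wv} to arrive at $(I-W_v)M^{(nb)}_v = J - Z^{(nb)}R - E$, with the entrywise formula obtained exactly as you describe. Your explicit verification that $D^{-1}TY = R$ holds for general graphs (via the diagonal support of $Y$ and Lemma~\ref{lem: R expression}) is in fact a step the paper uses without comment, so your write-up is, if anything, more complete on that point.
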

\begin{proof}
    By Equations~\eqref{eq:Mev_expression_with_Y} and~\eqref{eq:Mnb_v_related_to_Mev}, 
    \begin{align*}
        D^{-1}TM^{(nb)}_{ev} &= D^{-1}T(J_{2m, n} + P_{nb}M^{(nb)}_{ev} - Y) \\
        M_{v}^{(nb)} &= J_{n, n} + D^{-1}TP_{nb}M^{(nb)}_{ev} - R.
    \end{align*}

    From these equations and again using the notational convenience $X = (I - P_{nb})M^{(nb)}_{ev}$, observe
    \begin{align*}
        Z^{(nb)}D^{-1}TX &= J - Z^{(nb)}R \\
        D^{-1}TZ^{(nb)}_eX + E &= J - Z^{(nb)}R && \text{(Lemma~\ref{lem: Znb and Znb_e 'commuting' with error})} \\
        D^{-1}T(I - W_e)M^{(nb)}_{ev} + E &= J - Z^{(nb)}R \\
        (I - W_v)M^{(nb)}_v + E &= J - Z^{(nb)}R && \text{(Lemma~\ref{lem: cycling W and Wv})} \\
        M^{(nb)}_v &= J - Z^{(nb)}R + W_vM^{(nb)}_v - E.
    \end{align*}
    This equation gives the relation $m_{ij} = 1 - z_{ij}r_j + (\pi^\top M^{(nb)}_v)_j - E_{ij}$. Combining this with the fact that $m_{ii} = 0$, work similar to that in the proof of Theorem~\ref{thm:M Z relation equation} gives the result.
\end{proof}

With this result established, we now give a relation between $\pi^\top M^{(nb)}_v \pi$ and $\text{tr}(Z_v^{(nb)}) - 1$, mirroring that of Corollary~\ref{cor:Z Kemeny edge-transitive}.

\begin{corollary} \label{cor:Z_kem_general}
    Let $G$ be a graph. Then
    \[
    \pi^\top M^{(nb)}_v \pi =\tr{Z_v^{(nb)}} - 1 + \frac{1}{2m}\tr{DE}
    \]
\end{corollary}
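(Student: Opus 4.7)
The plan is to mirror the proof of Corollary \ref{cor:Z Kemeny edge-transitive}, substituting the entry-wise identity
\[
m_{ij} = \frac{z_{jj} - z_{ij}}{\pi_j} + E_{jj} - E_{ij}
\]
from Theorem \ref{thm:M_Z_relation_eqn_error} in place of the cleaner identity available under Condition \ref{cond: locally uniform return time}. Multiplying by $\pi_j$ and summing over $j$, the leading term contributes $\tr{Z_v^{(nb)}} - (Z_v^{(nb)}\1)_i = \tr{Z_v^{(nb)}} - 1$ by Proposition \ref{prop: properties of Znb}, and the diagonal $E$-contribution satisfies $\sum_j E_{jj}\pi_j = \tfrac{1}{2m}\tr{DE}$ because $\pi_j = d_j/(2m)$. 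This yields
\[
\sum_{j\in V} m_{ij}\pi_j = \tr{Z_v^{(nb)}} - 1 + \frac{1}{2m}\tr{DE} - (E\pi)_i.
\]
Weighting by $\pi_i$ and summing over $i$ then gives
\[
\pi^\top M^{(nb)}_v \pi = \tr{Z_v^{(nb)}} - 1 + \frac{1}{2m}\tr{DE} - \pi^\top E\pi.
\]

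The remaining step, and the main hurdle, is to establish that $\pi^\top E\pi = 0$. Unpacking the definition $E = D^{-1}T Z^{(nb)}_e D_e^{-1} F$ with $F = -(\tau - P_{nb})Y$ directly would require understanding $Y$ in the absence of Condition \ref{cond: locally uniform return time}, which is precisely what we have renounced here. The cleaner route is to return to the matrix form of Theorem \ref{thm:M_Z_relation_eqn_error} and right-multiply by $\pi$. Since $R$ is diagonal with $R_{jj} = 1/\pi_j$ by \cite[Theorem 5.5]{fasino2023hitting}, $R\pi = \1$, and Proposition \ref{prop: properties of Znb} then gives $Z_v^{(nb)} R\pi = Z_v^{(nb)}\1 = \1$. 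Combined with $J\pi = \1$ and $W_v M^{(nb)}_v\pi = (\pi^\top M^{(nb)}_v\pi)\1$, the matrix equation collapses to
\[
M^{(nb)}_v\pi = (\pi^\top M^{(nb)}_v\pi)\1 - E\pi.
\]
Left-multiplying by $\pi^\top$ and using $\pi^\top\1 = 1$ yields $\pi^\top M^{(nb)}_v\pi = \pi^\top M^{(nb)}_v\pi - \pi^\top E\pi$, so $\pi^\top E\pi = 0$.

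Substituting $\pi^\top E\pi = 0$ into the earlier expression produces the asserted identity. As a sanity check, when $G$ has locally uniform return times, Lemma \ref{lem: Mev expression with T^TR} forces $Y = T^\top R$, so $F = 0$, $E = 0$, the residual $\tfrac{1}{2m}\tr{DE}$ term vanishes, and we recover Corollary \ref{cor:Z Kemeny edge-transitive} exactly.
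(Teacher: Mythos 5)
Your proof is correct, and the route you take for the key cancellation step is genuinely different from the paper's. The initial reduction matches: both you and the paper start from Theorem~\ref{thm:M_Z_relation_eqn_error}, expand $\pi^\top M^{(nb)}_v\pi$ entrywise, identify the leading contribution $\tr{Z^{(nb)}_v}-1$ via $Z^{(nb)}_v\1 = \1$, extract the diagonal term $\tfrac{1}{2m}\tr{DE}$ from $\sum_j E_{jj}\pi_j$, and isolate the remaining term $\pi^\top E\pi$ (which the paper writes equivalently as $\tfrac{1}{4m^2}\1^\top DED\1$). The divergence is in how $\pi^\top E\pi = 0$ is established. The paper unwinds the definition $E = D^{-1}TZ^{(nb)}_e D_e^{-1}F$ with $F = -(\tau - P_{nb})Y$, reduces $\1^\top DED\1$ to $\1^\top D_e^{-1}(\tau - P_{nb})(-Y)\1$, and then verifies $\1^\top D_e^{-1}(\tau - P_{nb}) = 0$ by a direct entrywise computation in the edge space. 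You instead derive the vanishing purely structurally: right-multiply the matrix identity $M^{(nb)}_v = J - Z^{(nb)}R + W_v M^{(nb)}_v - E$ by $\pi$, apply $J\pi = \1$, $R\pi = \1$ (from $r_j = 1/\pi_j$), and $Z^{(nb)}_v\1 = \1$, then left-multiply by $\pi^\top$, and the desired cancellation falls out algebraically with no need to peek inside $E$, $F$, $Y$, or $\tau$. Your argument is shorter and more conceptual; the paper's is more hands-on and has the side benefit of proving the stronger pointwise fact $\1^\top D_e^{-1}(\tau - P_{nb}) = 0$ along the way. Both rely on the same ingredients from Theorem~\ref{thm:M_Z_relation_eqn_error} and the general validity of $r_j = 1/\pi_j$, so neither smuggles in Condition~\ref{cond: locally uniform return time}.
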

\begin{proof}
    By Theorem~\ref{thm:M_Z_relation_eqn_error}, we have that 
    \[
    \pi^\top M^{(nb)}_v \pi = \text{tr}(Z_v^{(nb)}) - 1 + \sum_{i \in V}\sum_{j \in V}\pi_i(E_{jj} - E_{ij})\pi_j
    \]
    Now observe, 
    \begin{align*}
        \sum_{i \in V}\sum_{j \in V}\pi_i(E_{jj} - E_{ij})\pi_j &= \frac{1}{4m^2}\sum_i\sum_j\deg(i)\deg(j)(E_{jj} - E_{ij}) \\
        & =\frac{1}{4m^2}\sum_j\deg(j)E_{jj}\sum_i\deg(i) - \frac{1}{4m^2}\sum_i\sum_j \deg(i)\deg(j)E_{ij} \\
        &= \frac{1}{2m}\sum_{j}\deg(j)E_{jj} - \frac{1}{4m^2}\1^\top DED\1 \\
        &= \frac{1}{2m}\text{tr}(DE) - \frac{1}{4m^2}\1^\top DED\1
    \end{align*}
    From here, we show that $\1^\top DED\1 = 0$. Begin by writing 
    \begin{align*}
        \1^\top DED &= \1^\top TZ^{(nb)}_eD_{e}^{-1}F \\
        &= \1^\top Z^{(nb)}_eD_e^{-1}F \\
        &= \1^\top D_e^{-1}F \\
        &= \1^\top D_e^{-1}(\tau-P_{nb})(-Y)
    \end{align*}
    Next, we show that $\1^\top D_e^{-1}(\tau - P_{nb}) = 0$. Writing out the matrix multiplication,
    \begin{align*}
        \left(\1^\top D_e^{-1}\tau\right)_{(i, j)} &= \sum_{(x,y)\in E}(\1^\top D_e^{-1})_{(x, y)}\tau_{(x, y), (i, j)} = \sum_{(x,y)}\frac{1}{\deg(y)}\tau_{(x, y), (i, j)} = \frac{1}{\deg(i)}
    \end{align*}
    and similarly, 
    \begin{align*}
        \left(\1^\top D_e^{-1}P_{nb}\right)_{(i, j)} &= \sum_{(x,y) \in E}(\1^\top D_e^{-1})_{(x,y)}(P_{nb})_{(x, y), (i, j)} \\
        &= \sum_{(x, i)}\frac{1}{\deg(i)}(P_{nb})_{(x,i), (i, j)} = \frac{1}{\deg(i)}\sum_{(x, i)}p_{x, i, j} = \frac{1}{\deg(i)}
    \end{align*}
    Thus, $\1^\top D_e^{-1}(\tau-P_{nb}) = 0$, implying $\1^\top D_e^{-1}(\tau-P_{nb})(-Y) = 0$, and so the result holds. 
\end{proof}

Although this provides a concise expression connecting the two proposed definitions, the error matrix $E$ remains analytically intractable in general, thereby complicating efforts to bound $\frac{1}{2m}\text{tr}(DE)$. Consequently, much of our subsequent analysis is conducted empirically. The following section explores this direction, along with broader investigations into the behavior of the NB Kemeny's constant.

\section{Examples and Formulae for $\mathcal{K}^{nb}(G)$ on Graph Families}\label{sec:ex}

While we do not precisely understand the gap between \( \left|\pi^TM^{(nb)}_v\pi - (\tr{Z^{(nb)}_v} - 1) \right| \) for the general connected graph $G$, numerical evidence (see Figure~\ref{fig:numerical_comparison}) suggests that the discrepancy is minor and that

\begin{equation} \label{eq:conjectured_inequality}
    \tr{Z^{(nb)}_v} - 1 \leq \pi^\top M^{(nb)}_v\pi.
\end{equation}

When determining explicit formulae of Kemeny's constant for several graph families, we found that deriving $\pi^\top M^{(nb)}_v\pi$ was much more tractable than the alternative, especially when there is potential to exploit cycles and symmetries in the graph. 
In this section we derive explicit formulae for $\pi^\top M^{(nb)}_v\pi$ for several families of graphs and compare the values to the SRW Kemeny's constant. Recall the notation that $\kemeny_{v}^{nb}(G) = \pi^\top M^{(nb)}_v \pi$. As suggested by Figure~\ref{fig:srw_vs_nbrw} and verified in each example, 

\begin{equation} \label{eq:nb_vs_srw_conjecture}
    \mathcal{K}_v^{nb}(G) \leq \mathcal{K}_v(G).
\end{equation}

\begin{figure}[H]
    \centering
    \includegraphics[width=\linewidth]{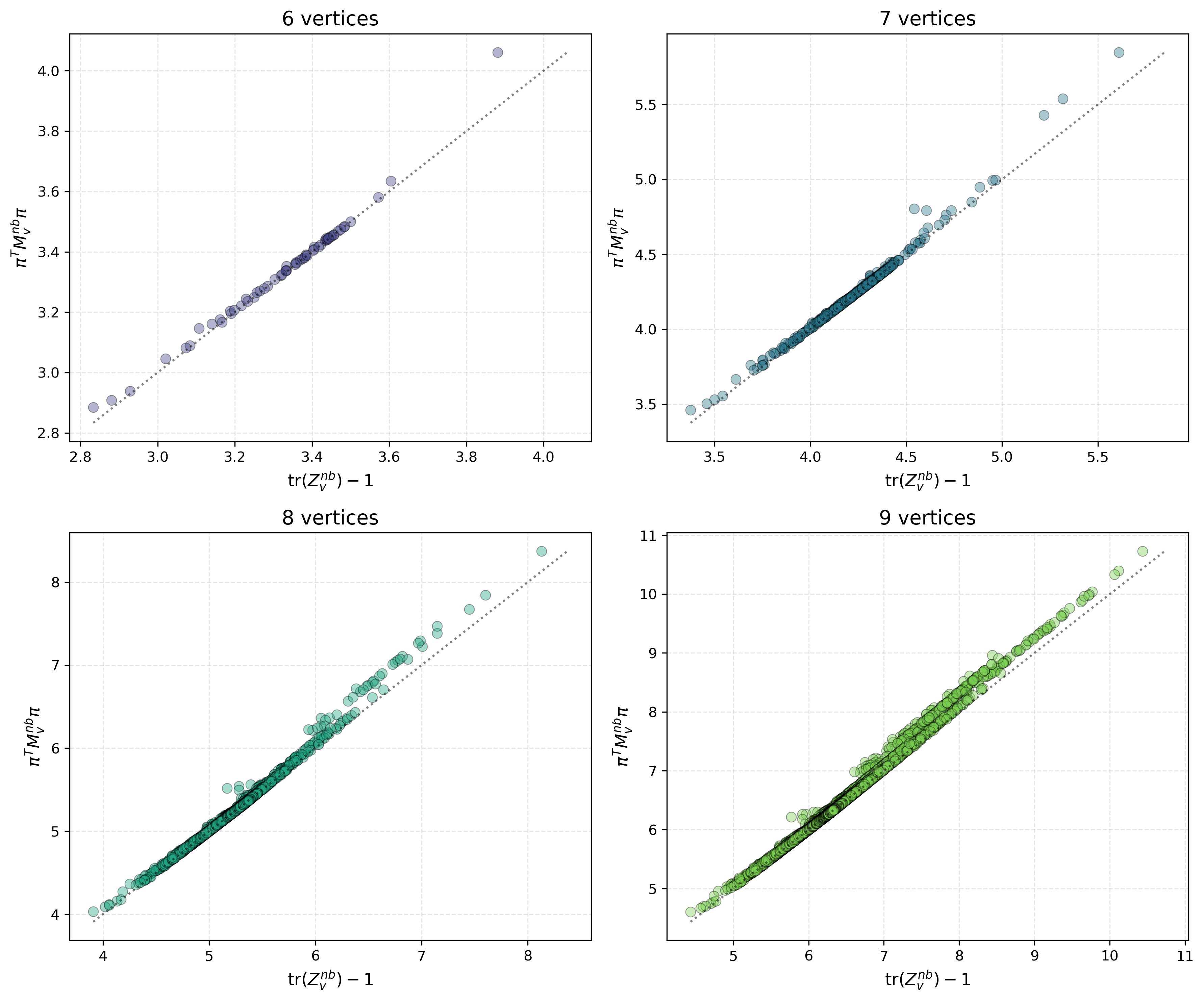}
    \caption{A numerical comparison on all connected graphs on 6-9 vertices between values of $\widehat{\kemeny}_{v}^{nb}(G) = \tr{Z^{(nb)}_v} - 1$ and $\kemeny_{v}^{nb}(G) = \pi^\top M^{(nb)}_v\pi$. Of note, these two values never seem to significantly stray from each other. Moreover, it suggests the relation $\tr{Z^{(nb)}_v} - 1 \leq \pi^\top M^{(nb)}_v\pi$. All computations done using SageMath.}
    \label{fig:numerical_comparison}
\end{figure}

\begin{figure}[H]
    \centering
    \includegraphics[width=\linewidth]{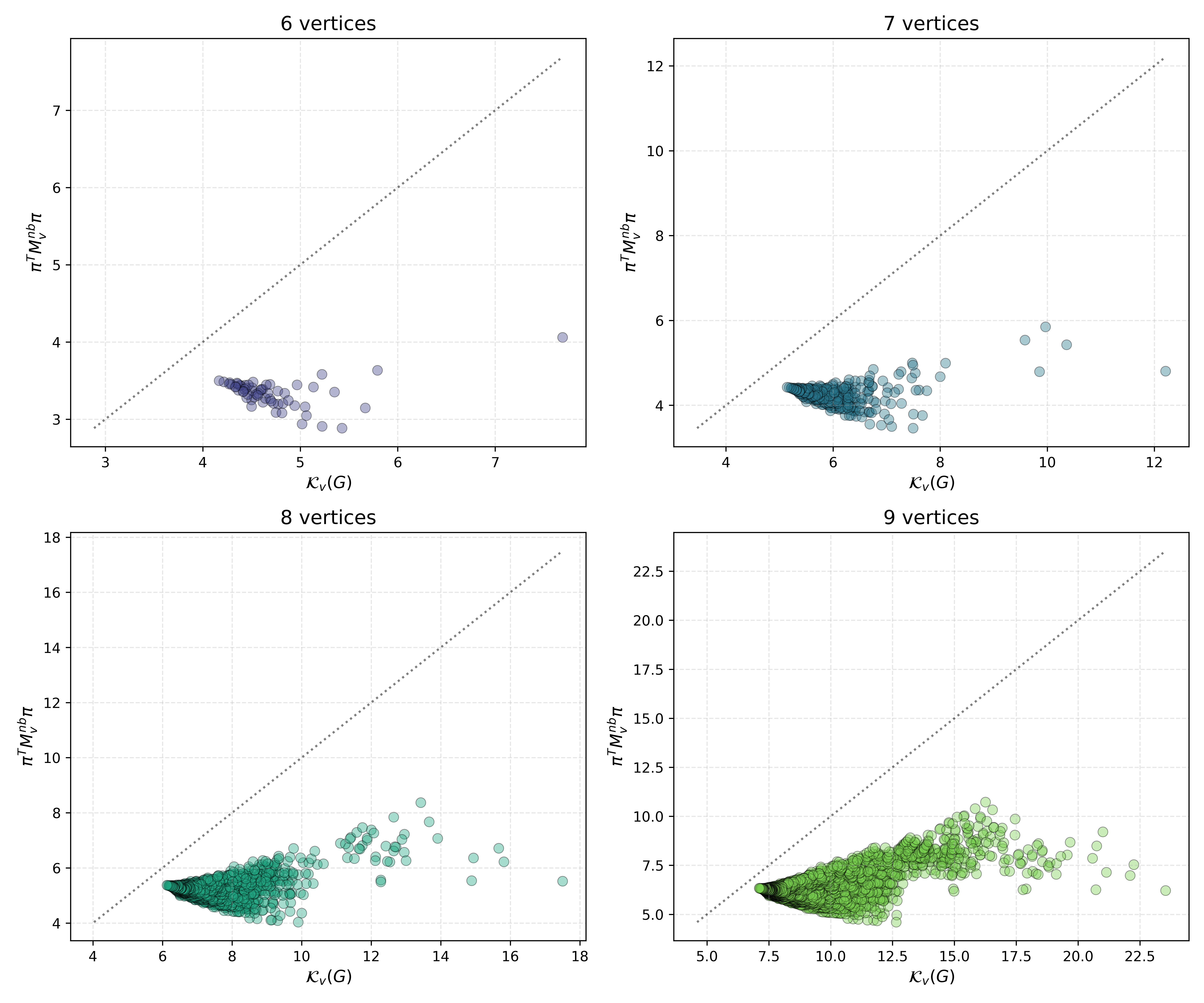}
    \caption{A numerical comparison of values of the SRW Kemeny's constant $\mathcal{K}_v(G)$ and the NB Kemeny's constant $\kemeny_{v}^{nb}(G) = \pi^\top M^{(nb)}_v\pi$ on all graphs with $|V(G)| \in \{6, 7, 8, 9\}$. Of interest, the NB value is strictly less than the SRW counterpart in each case.}
    \label{fig:srw_vs_nbrw}
\end{figure}

\subsection{$K_n$ vs. $C_nC_k$}
For the complete graph $K_n$, it is well known that $\mathcal{K}_v(K_n) \leq \mathcal{K}_v(G)$ for any other $G$ with $|V(G)| = n$ \cite{palacios2010bounds}. Interestingly, this does not hold in the non-backtracking case. Denote by $C_nC_k$ the cycle on $n$ vertices with an additional edge to create a cycle of size $k < n$. An example of this, $C_8C_3$ is shown in Figure \ref{fig:C8C3}. 

At a high level, the NBRW traverses cycles very quickly, becoming effectively deterministic after just one step. However, the cycle $C_n$ lacks a well-defined fundamental matrix $Z^{(nb)}_v$, as the non-backtracking edge-space transition matrix $P_{nb}$ is reducible and periodic. In particular, the NBRW on $C_n$ cycles deterministically through the edges, resulting in a transition matrix with multiple recurrent classes and no unique stationary distribution. If this pathological behavior were not present, we would expect $C_n$ to minimize NB Kemeny’s constant due to its fast mixing properties. Instead, the graph family $C_nC_3$ serves as a near-optimal structure: its small attached triangle introduces only a minimal perturbation, while the majority of the walk remains nearly deterministic, retaining much of the efficiency of the pure cycle.

\begin{definition} \label{def:CnCk}
    The graph $G = C_nC_k$ is the $n$-cycle, with the addition of an edge, forming an additional $k$-cycle. 
    Note that if $|V(G)|=n$ then $|E(G)|=n+1$.
\end{definition}

\begin{figure}[H]
    \centering
    \includegraphics[scale=0.5]{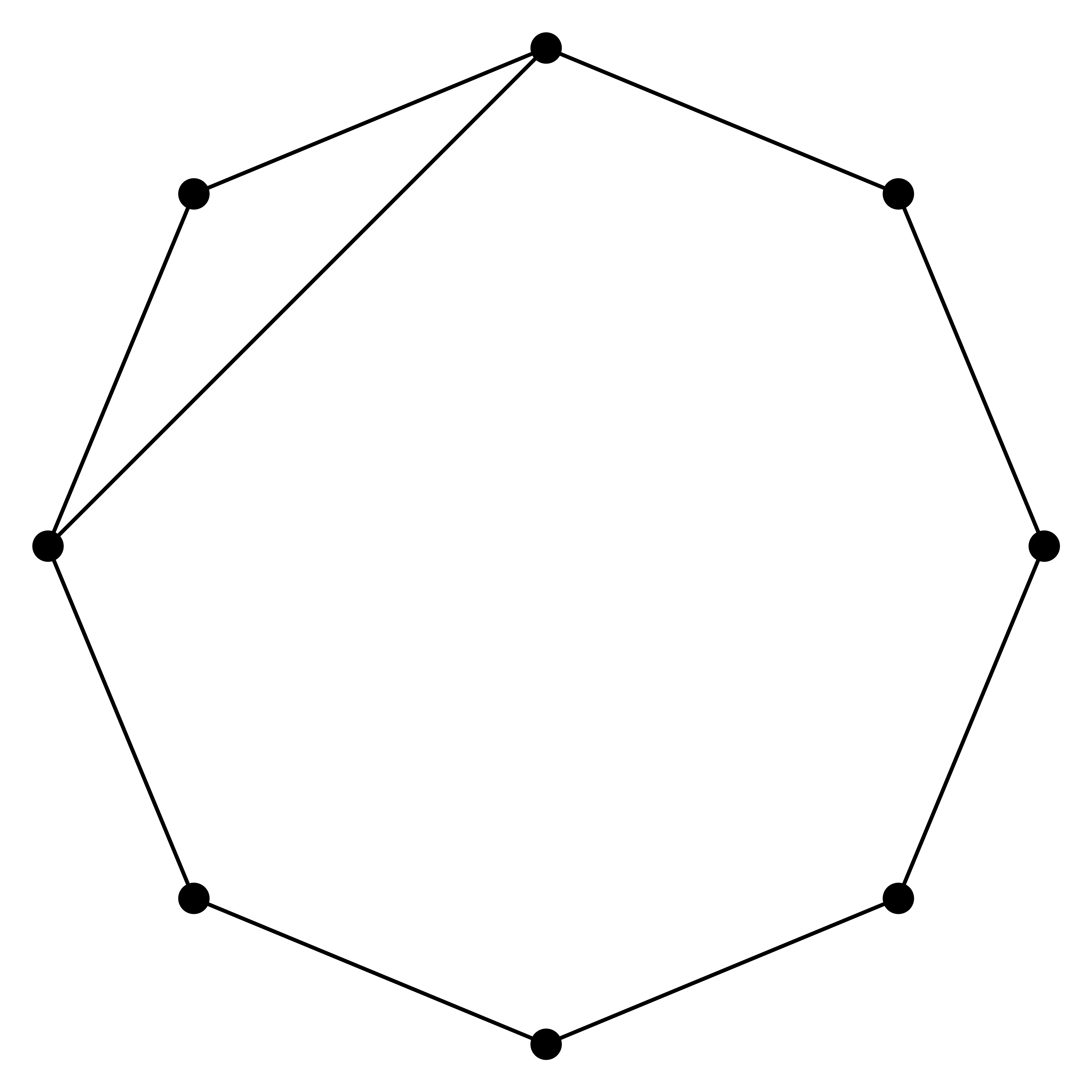}
    \caption{The graph denoted $C_8C_3$, an example of the family $C_nC_k$. This graph is formed by taking the $8$-cycle and adding an additional edge to create a cycle of length $3$. This graph is conjectured to minimize $\mathcal{K}^{(nb)}_v(G)$ for all $G$ with $|V(G)|=8$. In general, we conjecture that $C_nC_3$ minimizes $\mathcal{K}^{(nb)}_v(G)$ with $|V(G)| = n$.}
    \label{fig:C8C3}
\end{figure}

To support our claim that $C_nC_k$ has a smaller NB Kemeny's constant than $K_n$, and to compare this value to that of the SRW, we derive several intermediate results.

\begin{lemma} \label{lem:complete_kemeny}
    For the complete graph on $n$ vertices $K_n$, 
    \[
    \pi^TM^{(nb)}_v\pi = n - 3 + \frac{3}{n}.
    \]
\end{lemma}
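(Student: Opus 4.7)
The plan is to exploit the extreme symmetry of $K_n$ to reduce the computation of $\pi^\top M_v^{(nb)} \pi$ to a single scalar recursion in the edge space. Since $K_n$ is vertex-transitive, every vertex has degree $n-1$, so $\pi = \tfrac{1}{n}\mathbf 1$, and by vertex-transitivity the off-diagonal entries of $M_v^{(nb)}$ are all equal to a common value $m$. Consequently
\[
\pi^\top M_v^{(nb)} \pi = \frac{1}{n^2}\sum_{i\ne j} m = \frac{n-1}{n}\, m,
\]
so it suffices to show $m = n-2+\tfrac{1}{n-1}$, which then gives $\tfrac{n-1}{n}(n-2+\tfrac{1}{n-1})=\tfrac{(n-1)(n-2)+1}{n}=n-3+\tfrac{3}{n}$.

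To compute $m$, I would work in the edge space, fixing a target vertex $j$ and using the edge-space hitting-time formulation behind $M_{ev}^{(nb)}$ from Section \ref{sec:nbprelim}. By symmetry of $K_n$ with $j$ held fixed, every directed edge $(a,b)$ with $a,b\ne j$ has the same expected NB hitting time to $j$; call this value $h$. From such an edge $(a,b)$, the NBRW steps uniformly to one of $n-2$ out-edges of $b$: with probability $\tfrac{1}{n-2}$ it lands on $j$ (one additional step), and with probability $\tfrac{n-3}{n-2}$ it lands on some vertex $\ell\notin\{a,b,j\}$, yielding an edge $(b,\ell)$ again of the ``generic'' type with value $h$. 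Hence
\[
h = \frac{1}{n-2}\cdot 1 + \frac{n-3}{n-2}(1+h),
\]
which I would solve to get $h = n-2$.

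Now starting from vertex $i\ne j$, conditioning on the first step of the NBRW (a uniform choice among the $n-1$ neighbors of $i$), with probability $\tfrac{1}{n-1}$ the first step hits $j$, and with probability $\tfrac{n-2}{n-1}$ the first step goes to some $k\ne j$, placing the walk on an edge $(i,k)$ of the generic type, so the remaining expected time is $h=n-2$. Therefore
\[
m = \frac{1}{n-1}\cdot 1 + \frac{n-2}{n-1}(1 + (n-2)) = \frac{1+(n-2)(n-1)}{n-1} = n-2 + \frac{1}{n-1},
\]
and substituting into the expression above yields the claimed formula.

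There is essentially no hard step here; the only subtlety is justifying that all non-$j$ edges collapse to a single equivalence class under the stabilizer of $j$ in $\mathrm{Aut}(K_n)$, which is immediate because this stabilizer acts transitively on ordered pairs of distinct vertices in $V(K_n)\setminus\{j\}$. Once that observation is made, the rest is the two short conditional-expectation computations above.
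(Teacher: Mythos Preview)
Your proof is correct and follows essentially the same approach as the paper: both use vertex-transitivity to reduce to computing a single off-diagonal entry $m=\alpha$ of $M_v^{(nb)}$, then obtain $\pi^\top M_v^{(nb)}\pi=\frac{n-1}{n}\alpha$. The only cosmetic difference is that the paper computes $\alpha$ by writing down the explicit hitting-time distribution and summing the series $\sum_k k\,p_k$, whereas you reach the same value via a one-step conditional-expectation recursion in the edge space; these are equivalent standard computations.
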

\begin{proof}
    First recall that even in the NBRW case, $\pi_i = \frac{\deg(i)}{\vol(G)} = \frac{n-1}{n(n-1)} = \frac{1}{n}$. Given the symmetry of the complete graph, $(M^{(nb)}_v)_{ij} = \alpha$ for all $i \neq j$ and $0$ when $i=j$. Thus $\pi^\top M^{(nb)}_v\pi = \frac{(n-1)\alpha}{n}$. To find the value of $\alpha$, we consider the probability of getting from a vertex $i$ to a different vertex $j$ after $k$ non-backtracking steps. \\

    \begin{table}[htbp]
    \centering
    \resizebox{\textwidth}{!}{%
    \begin{tabular}{|c|c|c|c|c|c|c|c|}
    \hline
    \textbf{Step} & 1 & 2 & 3 & 4 & $\dots$ & $k$ & $\dots$ \\
    \hline
    $\mathbb{P}$ 
    & $\dfrac{1}{n-1}$ 
    & $\dfrac{n-2}{n-1} \cdot \dfrac{1}{n-2}$ 
    & $\dfrac{n-2}{n-1} \cdot \dfrac{n-3}{n-2} \cdot \dfrac{1}{n-2}$ 
    & $\dfrac{n-2}{n-1} \cdot \left( \dfrac{n-3}{n-2} \right)^2 \cdot \dfrac{1}{n-2}$ 
    & $\dots$ 
    & $\dfrac{n-2}{n-1} \cdot \left( \dfrac{n-3}{n-2} \right)^{k-2} \cdot \dfrac{1}{n-2}$ 
    & $\dots$ \\
    \hline
    \end{tabular}%
    }
    \caption{Probability of first hitting a target vertex at step \( k \) in a non-backtracking random walk on \( K_n \)}
    \label{tab:Kn_hitting_times}
    \end{table}

    Thus in general we have \begin{align*}
        \alpha = (M^{(nb)}_v)_{ij} = \frac{1}{n-1} + \left(\frac{n-2}{n-1}\right)\left(\frac{1}{n-2}\right)\sum_{k=2}^\infty k \left(\frac{n-3}{n-2}\right)^{k-2} = n + \frac{1}{n-1} - 2.
    \end{align*}
    Simplifying gives the result.
\end{proof}

Recalling that \( \mathcal{K}_v(K_n) = \frac{n^2 - 2n + 1}{n} = n - 2 + \frac{1}{n} \), we find preliminary support for the conjecture \( \mathcal{K}_v^{nb}(G) \leq \mathcal{K}_v(G) \). In particular, the difference  
\[
\mathcal{K}_v(K_n) - \mathcal{K}_v^{nb}(K_n) = 1 - \frac{2}{n}
\]
is positive for all \( n > 2 \), suggesting that the non-backtracking variant consistently yields a smaller value.

\begin{lemma} \label{lem:CnCk_kemeny}
    For the graph $C_nC_k$, 
    \[
    \pi^\top M^{(nb)}_v\pi = \frac{10n^3 + n^2(3k+6)-n(3k^2-9k+22) - 3k^2 + 6k - 15}{18(n+1)^2}.
    \]
    Furthermore, this quantity is increasing in $k$ for $3\leq k\leq \frac{n}{2}$.
\end{lemma}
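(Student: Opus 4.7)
The plan is to exploit the fact that the non-backtracking random walk on $G=C_nC_k$ is almost entirely deterministic. The graph has precisely two vertices of degree three---call them $u$ and $v$---joined by the chord; together they split the $n$-cycle into a \emph{short arc} of $k-1$ edges and a \emph{long arc} of $n-k+1$ edges, and all other $n-2$ vertices have degree two. At a degree-two vertex the NBRW is forced to continue in the same direction, so all randomness concentrates at $u$ and $v$, where the walker picks uniformly between its two available non-backtracking continuations.

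First I would record the stationary distribution: $\vol(G)=2(n+1)$, so $\pi_u=\pi_v=3/(2(n+1))$ and $\pi_w=1/(n+1)$ for every degree-two vertex $w$. Next I would reduce the NBRW to an auxiliary Markov chain whose states are the six directed edges out of $\{u,v\}$. Each step of this reduced chain corresponds to traversing one of the three connectors (short arc, long arc, or chord) deterministically from one endpoint to the other, followed by a $\frac{1}{2}$--$\frac{1}{2}$ coin flip at the far endpoint. Standard first-passage analysis on this tiny chain yields the expected number of connector traversals---and hence the expected number of NBRW steps---until the walker first arrives at a specified target connector direction.

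With the reduced chain in hand, I would compute $m_{ij}=(M^{(nb)}_v)_{ij}$ by splitting into cases according to where $i$ and $j$ lie: both on the short arc, both on the long arc, one on each arc, or at least one of $i,j$ equal to $u$ or $v$. In each case the hitting time decomposes as a deterministic traversal along an arc plus, possibly, a geometrically distributed number of ``missed'' detours through $u$ or $v$ before the walker finally commits to the arc containing $j$. The double sum $\pi^\top M^{(nb)}_v\pi=\sum_{i,j}\pi_i\pi_j m_{ij}$ then reduces to polynomial sums of the form $\sum_{t=1}^{k-1} t$, $\sum_{t=1}^{k-1} t^2$, and their long-arc analogues, all of which evaluate in closed form. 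Collecting the contributions over the common denominator $18(n+1)^2$ should produce the stated expression. The main obstacle is careful bookkeeping: the walker's future at $u$ or $v$ depends on the arc it arrived from, so one must track the direction of approach throughout, and the detour contributions must be weighted correctly in the geometric sums.

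For the monotonicity, the denominator $18(n+1)^2$ is independent of $k$, so it suffices to examine the numerator $N(k)$. Differentiating gives
\[
N'(k)=3n^2-6nk+9n-6k+6=3n^2+9n+6-6(n+1)k,
\]
which is linear in $k$ with slope $-6(n+1)<0$, and is therefore minimized over $3\le k\le n/2$ at the right endpoint $k=n/2$, where $N'(n/2)=6n+6>0$. Hence $N'(k)>0$ throughout the range, establishing that $\pi^\top M^{(nb)}_v\pi$ is strictly increasing in $k$ on $3\le k\le n/2$.
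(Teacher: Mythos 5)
Your outline for the formula follows essentially the same strategy as the paper: both arguments label the vertices by position around the cycle, exploit the fact that the non-backtracking walk is deterministic at degree-two vertices so that all randomness concentrates at the two degree-three vertices, and then split into cases according to where $i$ and $j$ lie (same small cycle, same large cycle, different cycles, endpoint at a degree-three vertex) and evaluate the resulting geometric-series hitting times. Your ``six directed edges out of $\{u,v\}$'' reduced chain is a clean way to organize the coin flips, but the paper instead writes down the geometric sums directly over trajectory types; the two viewpoints are interchangeable and lead to the same polynomial sums. The one genuine gap is that you never actually carry out the computation and verify it lands on the stated rational function: the nine-plus-case bookkeeping (including the subcases $p<q$ vs.\ $p>q$ within the same arc, and the initial $1/2$--$1/2$ flip at a starting degree-two vertex) is where all the real work lives, and without doing those sums the identity is not established. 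Your plan is sound, but to constitute a proof you would need to exhibit the hitting times case by case and perform the weighted double sum.

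For the monotonicity claim, your argument is correct and is a slightly different route from the paper's. You differentiate $N(k)$ with respect to $k$ and observe that $N'(k)=3n^2+9n+6-6(n+1)k$ is a decreasing linear function whose value at the right endpoint $k=n/2$ is $6n+6>0$, hence $N'>0$ on the whole interval; this is a complete and clean argument. The paper instead examines the discrete difference $N(k+1)-N(k)$ and bounds it from below, but its computation of that difference carries a sign error in the coefficient of $k$ (it should be $3n^2-6nk+6n-6k+3$, giving the lower bound $3n+3$ rather than the stated $9n+3$ at $k=n/2$); the conclusion survives, but your derivative argument avoids the slip entirely and also gives strict monotonicity directly.
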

\begin{proof}
    The expression of Kemeny's constant is computed in a similar fashion to the proof of Lemma~\ref{lem:complete_kemeny}, but proceeds in six cases to fully compute each entry of $M^{(nb)}_v$. The work for each case is given in Appendix~\ref{apx:proof_cnck_kem}.

    To show the second part, define 
    \[
    f(k) := \frac{10n^3 + n^2(3k+6)-n(3k^2-9k+22) - 3k^2 + 6k - 15}{18(n+1)^2} = \frac{N(k)}{18(n+1)^2}.
    \]
    To show that $f(k) \leq f(k+1)$, it suffices to show that $N(k) \leq N(k+1)$. It can be shown that $N(k+1) - N(k) = -6kn + 6k + 3n^2 + 6n + 3$. The terms that depend on $k$ can be written as $6k(1 - n)$, which is a decreasing function of $k$ when $n \geq 2$. Given that $k \leq \frac{n}{2}$, we have \begin{align*}
        -6kn + 6k + 3n^2 + 6n + 3 & \geq 3(1-n)n + 3n^2 + 6n + 3 \\
        &= 9n + 3 \\
        &\geq 0
    \end{align*}
    for any nonnegative $n$. 
    
\end{proof}

This not only provides a closed-form expression for the NB Kemeny's constant of \( C_nC_k \), but also demonstrates that, among all choices of \( k \) for a fixed \( n \), the minimum is attained when \( k = 3 \).  We note in the following that this is in contrast to the situation with the simple Kemeny's constant.

\begin{lemma} \label{lem:CnCk_SRW_kemeny}
    For the graph $C_nC_k$, the SRW value of Kemeny's constant is given by 

    \[
    \textstyle
    \mathcal{K}_v(C_nC_k) = \frac{kn^4 - (2k^2 - 6k + 2)n^3 + (2k^3 - 12k^2 + 15k - 6)n^2 - (k^4 - 12k^3 + 27k^2 - 21k + 7)n - 4k^4 + 16k^3 - 23k^2 + 14k - 3}{6(n+1)(k(n+2) - k^2-1)}.
    \]
    Furthermore, this expression is decreasing in $k$
    for $3 \leq k \leq \frac{n}{2}$.
\end{lemma}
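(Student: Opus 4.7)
The plan is to compute $\mathcal{K}_v(C_nC_k)$ using the effective-resistance representation of Kemeny's constant for a simple random walk. Combining the commute-time identity $m_{ij}+m_{ji}=2|E|\,R^{\mathrm{eff}}_{ij}$ with $\mathcal{K}_v(G)=\pi^\top M\pi$ and $\pi_i=d_i/(2|E|)$ yields
\[
\mathcal{K}_v(G)\;=\;\frac{1}{2|E|}\sum_{\{i,j\}\subseteq V} d_i d_j\, R^{\mathrm{eff}}_{ij},
\]
and since $C_nC_k$ is series-parallel, every effective resistance admits a closed form via standard series and parallel reductions.

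First I would set up notation: label the chord endpoints $u,v$ (both of degree $3$; all other vertices have degree $2$) and denote by the \emph{short arc} the $u$-$v$ path of $k-1$ edges and by the \emph{long arc} the $u$-$v$ path of $n-k+1$ edges, with $|E|=n+1$. The chord together with the two arcs forms three parallel $u$-$v$ paths, giving $R^{\mathrm{eff}}_{uv}=(k-1)(n-k+1)/[k(n+2)-k^2-1]$, which already matches the denominator factor in the stated formula. For a general vertex $w$, one computes $R^{\mathrm{eff}}_{uw}$ and $R^{\mathrm{eff}}_{vw}$ by cutting the arc containing $w$ at $w$ and reducing the rest to a one-port; the answers are rational in $n,k$ and linear in the arc-position of $w$. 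For a pair $\{w_1,w_2\}$, analogous reductions in four cases (both on the short arc, both on the long arc, one on each arc, and pairs involving $u$ or $v$) give $R^{\mathrm{eff}}_{w_1w_2}$. Finally, the sum $\sum d_id_j R^{\mathrm{eff}}_{ij}$ with $d_id_j\in\{4,6,9\}$ reduces to arithmetic sums $\sum a$ and $\sum a^2$ over arc positions; collecting over the common denominator $k(n+2)-k^2-1$ and simplifying is expected to produce the stated polynomial ratio. The main obstacle in this step is the case analysis and polynomial bookkeeping---routine but tedious.

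For the monotonicity claim, observe the graph isomorphism $C_nC_k \cong C_nC_{n+2-k}$ (the chord simultaneously creates cycles of lengths $k$ and $n+2-k$), so $f(k):=\mathcal{K}_v(C_nC_k)$ is symmetric about $k=(n+2)/2$. The denominator $6(n+1)(k(n+2-k)-1)$ is positive throughout $3\le k\le n/2$ (for $n\ge 6$), so it suffices to show that the numerator of $f(k+1)-f(k)$ over the common denominator is non-positive in the range. This numerator is a polynomial of moderate degree in $n$ and $k$; I anticipate that factoring out $(n-2k)$---non-negative on $[3,n/2]$---will leave a factor that is manifestly non-negative, either by direct inspection or by rewriting as a sum of products of non-negative terms. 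If direct factoring proves awkward, an equivalent approach is to treat $k$ as continuous and analyze the sign of $f'(k)$ via $N'(k)D(k)-N(k)D'(k)$, which vanishes at $k=(n+2)/2$ by the symmetry; verifying the sign then reduces to an endpoint check at $k=3$ combined with the absence of interior zeros. I expect this sign analysis to be the main technical step of the whole lemma.
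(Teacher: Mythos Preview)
Your proposal follows essentially the same route as the paper, which itself omits the details: the paper computes $\mathcal{K}_v(C_nC_k)$ via resistance distances (citing \cite{barrett2020spanning} for those) and handles the monotonicity by a difference argument analogous to Lemma~\ref{lem:CnCk_kemeny}. Your series--parallel reductions are a perfectly valid alternative to the spanning-forest machinery of \cite{barrett2020spanning} for this theta graph, and your use of the isomorphism $C_nC_k\cong C_nC_{n+2-k}$ is a nice organizing observation for the sign analysis; just note that the symmetry center is $(n+2)/2$, so the natural factor to extract is $(n+2-2k)$ rather than $(n-2k)$.
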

\begin{proof}
    We omit the full proof here, but acknowledge that the expression for Kemeny's constant relies on results from \cite{barrett2020spanning} to compute resistance distances and the inequality is proved from a similar, yet more technical argument as in the proof of the previous lemma. 
\end{proof}


As a corollary to Lemmas \ref{lem:complete_kemeny} and \ref{lem:CnCk_kemeny}, we can conclude the following with direct computation.
\begin{corollary} \label{thm:CnC3_less_Kn}
For $n\geq4$,
\[\mathcal{K}_v^{nb}(C_nC_3) < \mathcal{K}_v^{nb}(K_n).\]
\end{corollary}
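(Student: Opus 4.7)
The plan is to execute the direct computation promised in the statement by combining the two closed-form formulas just proved. From Lemma \ref{lem:complete_kemeny}, one has $\mathcal{K}_v^{nb}(K_n) = (n^2 - 3n + 3)/n$. Specializing Lemma \ref{lem:CnCk_kemeny} to $k = 3$ collapses the cubic-in-$k$ numerator to
\[
\mathcal{K}_v^{nb}(C_nC_3) \;=\; \frac{10n^3 + 15n^2 - 22n - 24}{18(n+1)^2}.
\]

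Next, I would clear denominators. Since $n(n+1)^2 > 0$ for $n \geq 4$, the desired inequality is equivalent to
\[
n\bigl(10n^3 + 15n^2 - 22n - 24\bigr) \;<\; 18(n+1)^2 (n^2 - 3n + 3).
\]
Expanding both sides and subtracting, the inequality reduces to showing that
\[
p(n) \;:=\; 8n^4 - 33n^3 - 14n^2 + 78n + 54 \;>\; 0 \quad \text{for all } n \geq 4.
\]

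The final step is to verify this polynomial inequality. A direct check gives $p(4) = 78 > 0$. For $n \geq 5$ one can factor the dominant terms as $n^3(8n - 33) \geq 7n^3$, which already outstrips $|{-14n^2}| = 14n^2$ once $n \geq 2$; adding the nonnegative contributions from $78n + 54$ then yields $p(n) > 0$. Alternatively, one may simply observe that $p'(n) = 32n^3 - 99n^2 - 28n + 78$ is positive for $n \geq 4$, so $p$ is increasing on that range and the base case $p(4) > 0$ suffices.

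There is no conceptual obstacle here: the argument is a mechanical polynomial manipulation enabled entirely by the two preceding lemmas. The only care needed is the arithmetic of the substitution $k=3$ and the verification at the single base case $n=4$, since the asymptotic dominance of the $8n^4$ term makes large $n$ immediate.
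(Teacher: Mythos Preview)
Your proposal is correct and follows exactly the route the paper indicates: the paper simply states that the corollary follows from Lemmas~\ref{lem:complete_kemeny} and~\ref{lem:CnCk_kemeny} ``with direct computation'' and gives no further details, so your explicit polynomial manipulation and verification of $p(n)>0$ is precisely what that direct computation amounts to.
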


This shows that, unlike in the SRW setting, the complete graph $K_n$ does not minimize the NB formulation of Kemeny's constant. Rather, for $n\geq 4$, the NB Kemeny's constant is strictly smaller for the graph $C_nC_3$. Moreover, Lemmas~\ref{lem:CnCk_kemeny} and~\ref{lem:CnCk_SRW_kemeny} highlight an interesting contrast between SRW and NBRW dynamics on $C_nC_k$: as $k$ increases, Kemeny's constant increases in the SRW case but decreases in the NBRW case. Furthermore, 
we observe that the NB formulation of Kemeny’s constant yields consistently smaller values than the SRW counterpart, both for the complete graph and for \( C_nC_k \).

\subsection{Cycle Barbells}
The authors of \cite{breen2023kemeny} examined both the SRW and NBRW formulations of edge-space Kemeny's constant for the cycle barbell family of graphs, observing that this family empirically maximizes the constant among all graphs with minimum degree 2 on \( n \) vertices and \( n+1 \) edges. In a similar vein, we observed that our proposed formulation of non-backtracking Kemeny's constant in the vertex space exhibits analogous behavior, up to order 9. Motivated by these findings, we derive a closed-form expression for \( \mathcal{K}_v^{\mathrm{nb}} \) for this family of graphs.

\begin{definition}
    The \textit{cycle barbell} $G = CB(k, a, b) = C_a \oplus P_k \oplus C_b$ is the $1$-sum of an $a$-cycle, a path on $k$ vertices, and a $b$-cycle for integers $k\geq 2, a,b \geq 3$. It is straightforward to verify that the number of vertices and edges are given by $|V(G)| = a + b + k - 2$ and $|E(G)| = a + b + k - 1$.
\end{definition}

\begin{figure}[htb]
    \centering
    \includegraphics[width=0.4\linewidth]{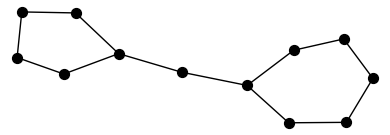}
    \caption{The graph CB(3, 5, 6).}
    \label{fig:cb_5_3_6}
\end{figure}

\begin{lemma} \label{lem:cycle_barbell}
    For the cycle barbell $CB(k, a, b)$, we have \begin{align*}
        \pi^\top M^{(nb)}_v\pi &= 
        \frac{13a^3 + 13b^3 + b^2(-87 + 60k) + a^2(-87 + 60b + 60k) + 2b(49 - 72k + 24k^2)}{24(-1 + a + b + k)^2} \\ &+ \frac{4(-9 + 20k - 15k^2 + 4k^3) + 2a(49 + 30b^2 - 72k + 24k^2 + 12b(-7 + 4k))}{24(-1 + a + b + k)^2}.
    \end{align*}
\end{lemma}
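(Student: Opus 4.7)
The plan is to adapt the direct-computation strategy used in Lemma~\ref{lem:CnCk_kemeny}: compute the entries of the non-backtracking first-hitting-time matrix $M^{(nb)}_v$ case-by-case, then evaluate the quadratic form $\pi^\top M^{(nb)}_v \pi$ by summing over vertex pairs weighted by the stationary distribution. I would begin by fixing notation: label the two junction vertices $u$ (shared by $C_a$ and $P_k$) and $v$ (shared by $P_k$ and $C_b$). Every non-junction vertex has degree $2$, while $u$ and $v$ have degree $3$. Since $|E(G)| = a+b+k-1$, the stationary distribution assigns mass $1/(a+b+k-1)$ to each non-junction vertex and $3/\bigl(2(a+b+k-1)\bigr)$ to each junction.

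The key structural observation is that the non-backtracking walk becomes deterministic once the walker commits to a direction on a cycle or on the interior of the path; genuine randomness enters only when the walk arrives at $u$ or $v$, where with probability $1/2$ it continues along the cycle and with probability $1/2$ it transitions onto the path (or vice-versa). I would partition the ordered pairs $(i,j)$ of distinct vertices according to where each lies: (i) both on the same cycle, (ii) both on the path, (iii) one on a cycle and one on the path, (iv) on different cycles, with appropriate subcases when one of $i,j$ coincides with $u$ or $v$. In each case the first-hitting time decomposes as a deterministic segment plus an expected waiting time produced by Bernoulli escape/return choices at a junction. These waiting times appear as geometric series, analogous to the $\tfrac{n-2}{n-1}\bigl(\tfrac{n-3}{n-2}\bigr)^{k-2}$ weight exploited in the proof of Lemma~\ref{lem:complete_kemeny}.

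The main obstacle is the bookkeeping of these excursions. For example, when $i \in C_a \setminus \{u\}$ and $j \in P_k \cup C_b$, the walk must reach $u$ and then take the path edge; upon arriving at $u$ from the cycle side it re-enters the cycle with probability $1/2$, executes a full loop of $a$ steps, and tries again, contributing a $\mathrm{Geometric}(1/2)$ number of extra laps. Symmetric considerations produce lap-excursions around $C_b$ whenever $j$ sits beyond $v$. Care must be taken to distinguish arrivals at a junction from the cycle side versus the path side (only the former yields an excursion possibility) and to track when the source or target vertex is itself a junction, since the available out-edges differ there. The symmetry $a \leftrightarrow b$ can be used to halve the amount of explicit case work.

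Once every entry $(M^{(nb)}_v)_{ij}$ is expressed as a polynomial in $a,b,k$ and the positions of $i$ and $j$ within their respective sub-structures, I would assemble $\pi^\top M^{(nb)}_v \pi$ by evaluating the resulting arithmetic sums along $C_a$, $C_b$, and $P_k$, treating the junction vertices as separate summands so that their extra stationary weight is accounted for correctly. Collecting and simplifying the resulting rational expression (with common denominator $(a+b+k-1)^2$ coming from the $\pi_i \pi_j$ factors) yields the stated closed form. As in Lemma~\ref{lem:CnCk_kemeny}, this final step is mechanical but lengthy and is most naturally relegated to an appendix.
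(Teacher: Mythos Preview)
Your proposal is correct and matches the paper's approach: the proof in Appendix~\ref{apx:proof_cb_kem} proceeds by exactly this direct case-by-case computation of the entries of $M^{(nb)}_v$ (there, twenty-three cases according to the locations and degrees of $p$ and $q$), exploiting the deterministic motion through degree-$2$ vertices and the geometric-series structure of the junction choices, and then assembles the weighted double sum $\pi^\top M^{(nb)}_v\pi$. One point to be careful about when you carry this out: in cases where both $p$ and $q$ lie in the same cycle (say $C_a$), the walk can still escape through the path, make geometrically many laps around $C_b$, and return before hitting $q$, so the ``excursion'' contribution is a compound of two coupled geometric mechanisms rather than a single $\mathrm{Geometric}(1/2)$ lap count---this is why the paper's Case~1 formula carries $b$ and $k$ terms.
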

\begin{proof}
    This follows by computing the matrix $M^{(nb)}_v$. Full details are given in Appendix~\ref{apx:proof_cb_kem}.    
\end{proof}

\begin{lemma} \label{lem:cb_max}
    For a fixed number of vertices $n$, $\mathcal{K}^{(nb)}_v(CB(k,a,b))$ is maximized at $CB(2, \lceil n/2 \rceil, \floor{n/2})$.
\end{lemma}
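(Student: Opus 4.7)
The plan is to treat this as a constrained polynomial optimization using the closed-form expression from Lemma~\ref{lem:cycle_barbell}. Since $|V(CB(k,a,b))| = a+b+k-2$, fixing $n$ is equivalent to fixing $a+b+k = n+2$, so the denominator $24(a+b+k-1)^2 = 24(n+1)^2$ becomes a constant, and maximizing $\mathcal{K}^{(nb)}_v(CB(k,a,b))$ reduces to maximizing the numerator $N(a,b,k)$ over integer triples satisfying $a,b \geq 3$, $k \geq 2$, and $a+b+k = n+2$.

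I would eliminate $k$ via the substitution $k = n+2-a-b$ to obtain a two-variable polynomial $\widetilde N(a,b)$. A preliminary check confirms that $\widetilde N$ is symmetric in $a$ and $b$: the apparent asymmetry in the formula of Lemma~\ref{lem:cycle_barbell} dissolves once the cross-term $a^2(60b)$ is combined with $2a(30b^2)$ to yield $60a^2b + 60ab^2$, as it must be since $CB(k,a,b) \cong CB(k,b,a)$. The optimization then proceeds in two stages. In Stage~1, for each fixed $s := a+b$ I would show that the restriction $a \mapsto \widetilde N(a, s-a)$ is a concave function of $a$; by the symmetry just noted, this one-variable function is even about $a = s/2$, so it suffices to verify that the coefficient of $(a - s/2)^2$ is negative. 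Granting this, the integer maximum lies at the balanced split $(a,b) = (\lceil s/2\rceil, \lfloor s/2\rfloor)$. In Stage~2, I would substitute this balanced split back to obtain a one-variable function $g(k)$, and show that $g$ is strictly decreasing on $k \in \{2, 3, \ldots, n-4\}$, so the global maximum is attained at $k = 2$.

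The main obstacle is the bookkeeping in Stage~2. After substituting the balanced split, the parities of $s = n+2-k$ (and therefore of $n$ and $k$) must be separated, producing a small number of sub-cases. In each sub-case the claim reduces to verifying that the forward difference $g(k+1) - g(k)$ is strictly negative, which is a polynomial inequality in $n$ and $k$. As in the proof of Lemma~\ref{lem:CnCk_kemeny}, I expect this difference to decompose into a dominant clearly-negative term of order $n^2$ or $n^3$ plus lower-order corrections that can be bounded, with a finite check handling the small-$n$ edge cases. Since the family is only defined for $k \geq 2$, no boundary analysis at $k = 1$ is required, and the constraints $a,b \geq 3$ only trim the interior of the optimization domain and do not affect the optimum once $n$ is at least, say, $8$.
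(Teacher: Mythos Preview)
Your overall framework---reduce to the numerator, fix $s=a+b$ (equivalently $k$), optimize the resulting quadratic in $a$, then optimize over $k$---is exactly the route the paper takes. The gap is in Stage~1: the quadratic $a\mapsto \widetilde N(a,s-a)$ is \emph{not} concave for every feasible $s$. Carrying out the expansion, the coefficient of $a^2$ works out to
\[
\alpha \;=\; 3\bigl(-7n+15k-16\bigr),
\]
which is negative only when $k<\tfrac{7n+16}{15}$. For $n\ge 10$ there are feasible values of $k$ (roughly $k\gtrsim 0.47n$) for which $\alpha\ge 0$, so the parabola opens upward and the balanced split $a=s/2$ is a \emph{minimum}, not a maximum, of the one-variable problem. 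Your plan ``verify that the coefficient of $(a-s/2)^2$ is negative'' would therefore fail at exactly this step.

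The paper patches this with a case split on the sign of $\alpha$. When $\alpha<0$ it proceeds essentially as you describe (your Stage~2 matches their Case~1, including the forward-difference argument). When $\alpha\ge 0$ the maximum over $a$ lies at the boundary $a=3$ (or $b=3$, by symmetry), and one then checks directly the polynomial inequality
\[
N\!\left(2,\tfrac{n}{2},\tfrac{n}{2}\right)\;\ge\;N\!\left(k,\,3,\,n-k-1\right)
\]
for all feasible $k$. So your proposal is missing precisely this boundary comparison; once you add it, the argument goes through.
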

\begin{proof}
    The proof of this involves restricting the focus to the numerator, $N(k, a, b)$ of the expression in Lemma \ref{lem:cycle_barbell} as for a fixed $n$, the denominator is fixed. Then we fix $k$ and reduce the numerator to a quadratic function of a single variable, $a$. We find that the maximizer $a^*$ must occur at either $\frac{n-k+2}{2}$ or at the end points of the feasible region for $a$, which is $3 \leq a \leq n-k-1$. In the case that $a^* = \frac{n-k+2}{2}$, we then show that $N(k, a^*, b^*)$ is a non-increasing function of $k$, which must be maximized when $k=2$. In the case that $a^*$ occurs at the end points, then we show that regardless of $k$, the value of $N(2, \lceil n/2 \rceil, \floor{n/2})$ is always larger and is thus the global maximum.
     The full details can be found in Appendix~\ref{apx:proof_cb_max}.
\end{proof}

This lemma mirrors the result of Theorem 5.7 in \cite{breen2023kemeny}, which pertains to the edge-based non-backtracking Kemeny's constant. The following lemma parallels Theorem 5.6 of \cite{breen2023kemeny}, which addresses the edge-based SRW Kemeny's constant. These observations suggest that the opposing behaviors arise as artifacts of the random walk type, rather than the choice of state space.

\begin{lemma}
    For a cycle barbell graph $CB(k, a, b)$ on a fixed number of vertices $n$, the simple random walk value of Kemeny's constant $\mathcal{K}_v$ is maximized at $CB(n-4, 3, 3)$.
\end{lemma}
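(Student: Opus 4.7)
I would approach this via the effective resistance formulation of Kemeny's constant for a simple random walk. On a connected graph with $m$ edges,
\[
\mathcal{K}_v(G) \;=\; \frac{1}{2m}\sum_{\{u,v\}\subseteq V} d_u d_v\, r(u,v),
\]
where $r(u,v)$ denotes the effective resistance between $u$ and $v$ in the unit-weight electrical network. This is well-suited to the cycle barbell because the graph has only two cut vertices (where the path attaches to each cycle), and resistance distances decompose cleanly along the block-cut tree; this is the same formalism appealed to in \cite{barrett2020spanning}.

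\textbf{Step 1 (resistances).} For two vertices $u,v$ lying on the same cycle $C_\ell$ at cycle distance $d$, the two arcs form a parallel resistor and $r(u,v) = d(\ell-d)/\ell$. For two vertices in different blocks, resistance distance is additive along the unique block path: intra-cycle resistance to the attaching cut vertex, plus the corresponding path distance, plus the intra-cycle resistance at the other end. Every vertex has degree $2$ except the two cut vertices (which have degree $3$), so the $d_u d_v$ factors are trivial to tabulate.

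\textbf{Step 2 (closed form).} Summing $d_u d_v r(u,v)$ decomposes into three within-block sums (cycle $C_a$, cycle $C_b$, path $P_k$) and three cross-block sums. Each reduces by elementary identities to a polynomial in $a, b, k$, and division by $2m = 2(a+b+k-1)$ yields a rational expression $\mathcal{K}_v(CB(k,a,b))$, directly analogous to the formula recorded for the NBRW case in Lemma~\ref{lem:cycle_barbell}.

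\textbf{Step 3 (optimization).} Fix $n = a+b+k-2$, so that $(k,a,b)$ ranges over integer points in $\{k\ge 2,\ a,b\ge 3,\ a+b+k = n+2\}$. Mirroring the method of Lemma~\ref{lem:cb_max}, I would fix $k$ and treat the numerator of the objective as a quadratic in $a$ with $b = n+2-k-a$; by symmetry in $a,b$ the quadratic's axis is $a = (n+2-k)/2$, so its maximum over the feasible interval is attained either at that vertex or at the endpoint $a = 3$. A sign check on the leading coefficient (which I expect to show the quadratic is concave down, placing the extremum at the boundary) then identifies $a=3$ (equivalently $b=3$) as optimal for each fixed $k$. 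Along the edge $a=b=3$, a one-variable monotonicity computation should give that $\mathcal{K}_v(CB(k,3,3))$ is strictly increasing in $k$, so the global maximum over $k \in \{2,\dots,n-4\}$ is attained at $k = n-4$, proving the claim.

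\textbf{Main obstacle.} The hard part is the polynomial bookkeeping in Steps 2 and 3, and in particular verifying that the $k$-monotonicity along $(a,b) = (3,3)$ goes the \emph{opposite} way from the NBRW analogue in Lemma~\ref{lem:cb_max}. The structural template is the same, but every sign check must be redone. A secondary subtlety is the interaction of the quadratic-in-$a$ vertex with the feasibility constraints $a,b \ge 3$: for small $k$ the vertex may lie inside the feasible interval, so the "vertex vs.\ boundary" comparison must be carried out explicitly rather than appealing to convexity alone.
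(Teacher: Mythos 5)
Your plan is correct in spirit but takes a \emph{genuinely different} (and much more laborious) route than the paper, and the optimization step as sketched contains a couple of real slips.

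\textbf{Comparison with the paper.} The paper does not compute $\mathcal{K}_v(CB(k,a,b))$ from scratch at all. It invokes Theorems~5.2, 5.3 and 5.6 of \cite{breen2023kemeny}, which already establish the identity $\mathcal{K}_v(G) = \mathcal{K}_e(G) - (n-2)$ for any cycle barbell on $n$ vertices, together with the fact that $\mathcal{K}_e$ is maximized at $CB(n-4,3,3)$. Since $n$ is fixed, the shift $-(n-2)$ is a constant and the maximizer carries over immediately. That is the entire proof. Your resistance-distance computation is a legitimate alternative (and the formula $\mathcal{K}_v(G) = \tfrac{1}{2m}\sum_{i<j} d_i d_j r(i,j)$ is correct, following from $m_{ij}+m_{ji} = 2m\, r(i,j)$), but it requires re-deriving the closed form and redoing the optimization from scratch, so it buys nothing here unless the prior result were unavailable.

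\textbf{Issues with the optimization step.} Two points in Step~3 need repair.
\begin{itemize}
\item You write that the quadratic in $a$ is expected to be ``concave down, placing the extremum at the boundary.'' These two clauses contradict each other: a concave-down quadratic attains its maximum at the vertex $a = (n+2-k)/2$ (balanced cycles), whereas a boundary maximum at $a=3$ requires the quadratic to be \emph{convex}. If the SRW numerator really is concave down for fixed $k$, then for that $k$ the optimal configuration is balanced, not $a=3$, and your argument would need to proceed along the ``balanced'' path $a=b=(n+2-k)/2$ rather than the boundary. (For comparison, in the NBRW case the paper's Lemma~\ref{lem:cb_max} finds exactly this sign-dependent dichotomy, with $\alpha = 3(-7n+15k-16)$ changing sign in $k$.) Whichever sign actually holds, the logic has to be internally consistent.
\item ``Along the edge $a=b=3$, a one-variable monotonicity computation... that $\mathcal{K}_v(CB(k,3,3))$ is strictly increasing in $k$'' is not a well-posed step once $n$ is fixed: $a=b=3$ together with $a+b+k = n+2$ pins $k=n-4$, a single point. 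What you presumably mean is the one-parameter family $CB(k,3,n-k-1)$ (optimal $a$ for each fixed $k$, under the convexity hypothesis), and you would want to show $k \mapsto \mathcal{K}_v(CB(k,3,n-k-1))$ is increasing on $\{2,\dots,n-4\}$, terminating at $CB(n-4,3,3)$. As written, $CB(k,3,3)$ varies $n$, which is not the comparison the lemma asks for.
\end{itemize}
These are fixable, and you flag that all sign checks must be redone, but the written plan would not compile into a proof without resolving the concavity contradiction and reformulating the final monotonicity claim over the correct one-parameter family.
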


\begin{proof}
    Theorems 5.2, 5.3, and 5.6 of \cite{breen2023kemeny} show that for any cycle barbell graph $G = CB(k, a, b)$,
    \[
    \mathcal{K}_v(G) = \mathcal{K}_e(G) - (n-2),
    \]
    where $\mathcal{K}_v$ is the vertex-based Kemeny's constant and $\mathcal{K}_e$ is the edge-based variant. They further prove that $\mathcal{K}_e(G)$ is maximized when $G = CB(n-4, 3, 3)$ among all cycle barbells on $n$ vertices.

    Since $n$ is fixed and the relation between $\mathcal{K}_v$ and $\mathcal{K}_e$ is linear with respect to $n$, it follows that $\mathcal{K}_v(G)$ is also maximized at $CB(n-4, 3, 3)$.
\end{proof}

Taken together, the preceding lemmas show that the SRW and NBRW variants of Kemeny’s constant behave quite differently on the cycle barbell family: the SRW value is maximized when the connector is long and the cycles are small, while the NBRW value is maximized when the connector is short and the cycles are balanced and large. 

It is also straightforward to verify symbolically (using a computer algebra system) that for all members of this family, the SRW value $\mathcal{K}_v(G)$ (given in \cite{breen2023kemeny}) is strictly greater than the NBRW value $\pi^\top M^{(nb)}_v \pi$, reinforcing the observation that the NBRW variant of Kemeny's constant consistently yields smaller values than its SRW counterpart.

\subsection{Complete Bipartite Graphs}
While the complete graph \( K_n \) minimizes the SRW Kemeny’s constant, the complete bipartite graph \( K_{p,q} \) also achieves a low value, \( \mathcal{K}_v(K_{p,q}) = p + q - \frac{3}{2} \). This combined with its incredibly symmetric nature makes it a natural case for examination.

We compute the NBRW Kemeny’s constant for \( K_{p,q} \). Since \( K_{p,q} \) is edge-transitive, the NBRW is locally uniform, and thus this value equals both \( \pi^\top M_v^{\mathrm{nb}} \pi \) and \( \mathrm{tr}(Z_{\mathrm{nb}}) - 1 \).

\begin{lemma} \label{lem:Kpq_kem}
    For the complete bipartite graph $K_{p, q}$, 
    \[
    \mathcal{K}_v^{(nb)}(K_{p,q}) = p + q + \frac{1}{p} + \frac{1}{q} - \frac{7}{2}.
    \]
\end{lemma}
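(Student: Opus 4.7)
Because $K_{p,q}$ is edge-transitive, Condition~\ref{cond: locally uniform return time} holds, so by Corollary~\ref{cor:Z Kemeny edge-transitive} the two proposed definitions agree and it suffices to compute $\pi^\top M^{(nb)}_v\pi$. Let $A$ and $B$ denote the parts of sizes $p$ and $q$. I would exploit the abundant symmetry of $K_{p,q}$ to reduce $M^{(nb)}_v$ to just four distinct entries: the common hitting time within $A$, within $B$, from $A$ to $B$, and from $B$ to $A$. Recall also that $\pi_i = 1/(2p)$ or $1/(2q)$ according to which part contains $i$.

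To compute each of the four hitting times, I would fix a target $j$ (say $j\in B$) and use $M^{(nb)}_v = D^{-1}TM^{(nb)}_{ev}$ from~\eqref{eq:Mnb_v_related_to_Mev}, so that $m^{(nb)}_{ij}$ is the average of the edge-space values $E_{(i,x),j}$ over the out-edges at $i$. The $E_{(a,b),j}$ satisfy the recursion of Theorem~4.3 of~\cite{fasino2023hitting} (as cited in the proof of Lemma~\ref{lem: Mev expression with T^TR}), and by vertex-transitivity within each part they depend only on which of three classes the edge $(a,b)$ belongs to: the hit class ($b=j$, value $1$), the class with $a\in A$, $b\in B\setminus\{j\}$, and the class with $a\in B\setminus\{j\}$, $b\in A$. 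The recursion sends each of the two non-hit classes into the other, with the hit class appearing as a boundary term, yielding a $2\times 2$ linear system whose closed-form solution is immediate. The case $j\in A$ follows by swapping the roles of $p$ and $q$.

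Finally, I would expand $\pi^\top M^{(nb)}_v\pi$ as a sum of four contributions---within-$A$, within-$B$, $A\to B$, and $B\to A$---multiplying in each case the count of ordered pairs by the appropriate $\pi_i\pi_j$ weight and the corresponding hitting time. Grouping the four contributions into ``$p$-terms'' and ``$q$-terms,'' each group should collapse into an expression of the form $x - \tfrac{7}{4} + \tfrac{1}{x}$ for $x\in\{p,q\}$, which sum to the claimed identity.

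The only subtle point is the base case of the recursion: when $b=j$ with $a\ne j$, the edge-space hitting time is $1$ rather than $0$, reflecting the single step already taken to arrive at $j$. With this in hand, the rest is just solving a $2\times 2$ linear system and collecting terms, and there is no serious obstacle beyond arithmetic care.
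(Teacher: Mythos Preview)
Your proposal is correct and follows the same overall strategy as the paper: reduce $M^{(nb)}_v$ to four distinct entries by symmetry, compute each, and then evaluate the weighted sum $\pi^\top M^{(nb)}_v\pi$. The one tactical difference is in how you propose to compute the four hitting times. The paper argues directly in the vertex space, writing down the probability that the target is first hit at step $k$ (these probabilities form a shifted geometric sequence because the bipartite structure forces hits only at steps of the correct parity) and summing the resulting series. You instead invoke the edge-space recursion of \cite{fasino2023hitting} and collapse it to a $2\times2$ linear system in the two non-terminal edge classes; solving this yields $E_2=2(q-1)$ and $E_1=2q-1$, and averaging over out-edges recovers exactly the paper's values $2(q-1)$ and $\frac{(q-1)(2q-1)+1}{q}$. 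Both routes are short and entirely routine here; your linear-system formulation is perhaps slightly more mechanical and avoids identifying the series pattern by hand, while the paper's step-by-step probability analysis is more self-contained and does not appeal to the cited edge-space theorem.
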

\begin{proof}
    This is again proved through finding $\pi^TM^{(nb)}_v\pi$. We consider 4 cases. For clarity, denote the two disjoint vertex sets as $V_1, V_2$.
    \begin{enumerate}
        \item Suppose $i\in V_1, j\in V_2$. The probability of reaching j from i in one step is $\frac{1}{q}$. The probability of hitting in two steps, and similarly in any even number of steps, is 0. In 3 steps the probability is $\left(\frac{q-1}{q}\right)\left(\frac{1}{q-1}\right)$. It can similarly be shown that at the $(2k+1)$-th step, the probability of hitting j is $\left(\frac{q-1}{q}\right)\left(\frac{q-2}{q-1}\right)^{k-1}\left(\frac{1}{q-1}\right)$. Thus, we see that \begin{align*}
            \Tilde{m}_{ij} &= \frac{1}{q} + \frac{1}{q}\sum_{k=1}^\infty(2k+1)\left(\frac{q-2}{q-1}\right)^{k-1} \\
            &= \frac{(q-1)(2q-1)+1}{q}
        \end{align*}
        \item $i\in V_2, j\in V_1$. Reverse the roles in the previous argument to show that \[
        \Tilde{m}_{ij} = \frac{(p-1)(2p-1)+1}{p}
        \]
        \item $i, j \in V_1$. Consider again the probabilities of reaching j in a fixed number of steps (see Table~\ref{tab:Kpq_nb_hitting}).  \\

        \begin{table}[htbp]
        \centering
        \begin{tabular}{|c|c|c|c|c|c|c|}
        \hline
        \textbf{Step} & 1 & 2 & 3 & 4 & 5 & $\dots$ \\
        \hline
        $\mathbb{P}$ 
        & $0$ 
        & $\dfrac{1}{p-1}$ 
        & $0$ 
        & $\left( \dfrac{p-2}{p-1} \right) \cdot \dfrac{1}{p-1}$ 
        & $0$ 
        & $\dots$ \\
        \hline
        \end{tabular}
        \caption{non-backtracking hitting time probabilities in Case 3 for a walk on \( K_{p, q} \).}
        \label{tab:Kpq_nb_hitting}
        \end{table}

        It follows then that at the $(2k+1)$-th step, the probability of hitting j is $0$ and at the $2k$-th step, the probability is $\left(\frac{p-2}{p-1}\right)^{k-1}\left(\frac{1}{p-1}\right)$. Thus, \begin{align*}
            \Tilde{m}_{ij} &= \left(\frac{1}{p-1}\right)\sum_{k=1}^\infty (2k)\left(\frac{p-2}{p-1}\right)^{k-1} \\
            &= 2(p-1)
        \end{align*}
        \item $i, j \in V_2$. Without loss of generality, apply the same argument in Case 3 to get \[
        \Tilde{m}_{ij} = 2(q-1)
        \]
    \end{enumerate}
    Note also that \[
        \pi_i =
        \begin{cases}
        \frac{1}{2p} & \text{if } i \in \{1, \dots, p\} \\
        \frac{1}{2q} & \text{if } i \in \{p+1, \dots, p+q\}
        \end{cases}
\]
    It then follows that \begin{align*}
        \pi^TM^{(nb)}_v\pi &= \sum_{i=1}^p\sum_{j=p+1}^{p+q}\left(\frac{1}{2p}\right)^2\left(2(p-1)\right) + \sum_{i=1}^p\sum_{j=p+1}^{p+q}\left(\frac{1}{4pq}\right)\left(\frac{(q-1)(2q-1)+1}{q}\right) \\
        &+ \sum_{i=p+1}^{p+q}\sum_{j=1}^{p}\left(\frac{1}{4pq}\right)\left(\frac{(p-1)(2p-1)}{p}\right) + \sum_{i=p+1}^{p+q}\sum_{j=p+1}^{p+q}\left(\frac{1}{2q}\right)^2\left(2(q-1)\right) \\
        &= \frac{(p-1)^2}{2p} + \frac{(q-1)(2q-1)+1}{q} + \frac{(p-1)(2p-1)+1}{p} + \frac{(q-1)^2}{2q} \\
        &= p + \frac{1}{p} + q + \frac{1}{q} - \frac{7}{2}
    \end{align*}
\end{proof}

In this family, it is immediately clear that the NB formulation of Kemeny's constant is always less than its SRW formulation, as
\[
    \mathcal{K}_v(K_{p,q}) - \mathcal{K}_v^{nb}(K_{p, q}) = 2 + \frac{1}{p} + \frac{1}{q} > 0,
\]
which again supports the conjecture given in \eqref{eq:nb_vs_srw_conjecture}.

\subsection{$(n, k)$-Pinwheel Graphs}
As a final example, we consider a family of graphs formed as the $1$-sum of $k$ $n$-cycles. This case offers further empirical support for the conjecture in \eqref{eq:nb_vs_srw_conjecture}, suggesting that the NBRW Kemeny’s constant is consistently smaller than its SRW counterpart. Moreover, it serves to illustrate the proof techniques employed across several examples in this work.

\begin{definition} \label{def:nk_pinwheel}
    Define the $(n, k)$-pinwheel graph $PW(n, k) := C_n^{(1)} \oplus C_n^{(2)} \oplus \dots \oplus C_n^{(k)}$, that is, the graph obtained by identifying a single vertex in $k$ copies of the cycle $C_n$. Note that $|V(PW(n, k))| = kn - k + 1$ and $|E(PW(k, n))| = kn$. See Figure~\ref{fig:PW_4_3} for a depiction of $PW(4, 3)$.
\end{definition}

\begin{figure}[htb]
    \centering
    \includegraphics[scale=0.5]{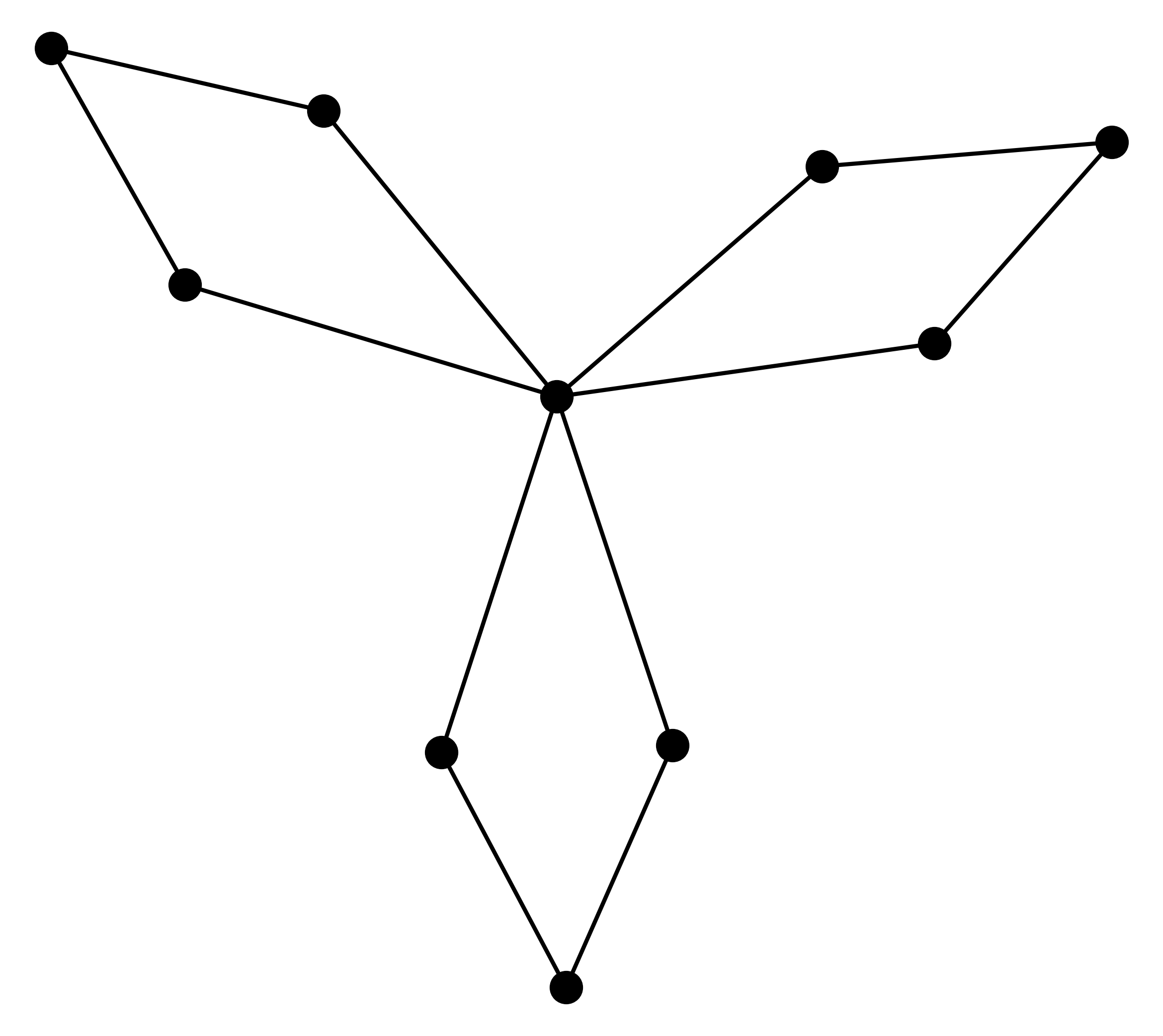}
    \caption{The Graph $PW(4, 3)$.}
    \label{fig:PW_4_3}
\end{figure}

\begin{lemma} \label{lem:pinwheel_nb_kem}
    For the graph $PW(n, k)$, 
    \[
    \pi^\top M^{(nb)}_v\pi = \frac{(n-1)(12k^3n - 18k^2n + 13kn - 2k - 4n + 2}{6kn(2k-1)}
    \]
\end{lemma}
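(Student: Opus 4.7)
My plan is to imitate the approach used in Lemmas \ref{lem:complete_kemeny}, \ref{lem:CnCk_kemeny}, \ref{lem:cycle_barbell}, and \ref{lem:Kpq_kem}: compute all entries of $M_v^{(nb)}$ up to the symmetries of $PW(n,k)$ and then evaluate the quadratic form $\pi^\top M_v^{(nb)}\pi$ directly, using $\pi_h = 1/n$ for the hub $h$ and $\pi_v = 1/(kn)$ for every non-hub vertex $v$. The structural fact that makes this tractable is that the NBRW on $PW(n,k)$ is piecewise deterministic: every non-hub vertex has degree $2$, so once a direction is chosen the walker marches deterministically along the cycle to the hub. The only genuine randomness occurs at the hub (of degree $2k$), where the walker picks uniformly among the $2k-1$ edges not reversing the most recent step. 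Each excursion from hub through a cycle and back to hub therefore takes exactly $n$ steps. Using the automorphisms of $PW(n,k)$ (which freely permute cycles and reflect each cycle about the hub), I collapse the possible ``came from direction $d$ of cycle $\ell$'' states at the hub to two equivalence classes relative to a fixed target: ``came from the target's cycle'' versus ``came from some other cycle.''

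I then split the computation of $m_{ij}^{(nb)}$ into four cases. \textit{(i) Hub to non-hub $v_q^{(\ell)}$:} with probability $1/k$ the first step enters cycle $\ell$ and hits $v_q$ in expected $n/2$ steps; otherwise an $n$-step excursion is wasted before returning to the ``other-cycle'' hub state. Solving the resulting two-state recursion yields $m_{h,v_q^{(\ell)}} = \tfrac{n(2k^2-2k+1)}{2k}$, independent of $q$. \textit{(ii) Non-hub to hub:} averaging over the two possible initial directions gives exactly $n/2$. \textit{(iii) Non-hub to non-hub on the same cycle, $p\neq q$:} first-step analysis splits into a direct subcase (reach $v_q$ along the cycle) and an indirect subcase (reach the hub first and then apply a recursion analogous to (i) but with the ``came-from-target's-cycle'' state). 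The resulting expression $D_{p,q}$ depends only on $q$ (or $n-q$), and the two orientations satisfy a very useful identity $D^+_q + D^-_q = kn$, where $D^{\pm}_q$ are the values for $q>p$ and $q<p$. \textit{(iv) Non-hub to non-hub on different cycles:} the walker first reaches the hub in expected $n/2$ steps, and the residual analysis reproduces the ``other-cycle'' recursion from (i), yielding $m = \tfrac{n(2k-1)}{2}$ independent of $p,q$.

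Finally, I partition $\pi^\top M_v^{(nb)}\pi$ according to these four cases. Cases (i), (ii), and (iv) contribute constants times easy vertex counts. Case (iii) contributes $\frac{1}{kn^2}\sum_{p\neq q,\ p,q\in\{1,\ldots,n-1\}} D_{p,q}$; applying the identity $D^+_q + D^-_q = kn$ rewrites this double sum as $\sum_q (n-2q)D^-_q$ plus a term involving $\sum_{q=1}^{n-1}(q-1)=\binom{n-1}{2}$, and what remains reduces via standard power sums $\sum q$ and $\sum q^2$. Combining the four contributions over the common denominator $6kn(2k-1)$ and simplifying produces the claimed formula.

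The main obstacle is the bookkeeping in case (iii) combined with the final double summation: one must carefully distinguish $q>p$ from $q<p$, track the post-hub state correctly for each, and collapse the resulting double sum using the symmetry identity. Everything else reduces to a single first-step recursion (made easy by the state-space collapse) or to polynomial arithmetic that, while long, is mechanical and can be verified by a computer algebra system.
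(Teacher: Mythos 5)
Your proposal follows the same route as the paper's proof: the same four-way case decomposition by vertex type (hub/non-hub, same/different cycle), the same deterministic-excursion observation that each trip through a cycle takes exactly $n$ steps, and the same final evaluation of the quadratic form $\pi^\top M_v^{(nb)}\pi$ with $\pi_h = 1/n$ and $\pi_v = 1/(kn)$. Your organization is slightly slicker---collapsing hub states to a two-state recursion and exploiting the symmetry identity $D^+_q + D^-_q = kn$ (which indeed holds; both matrices sum to $kn$ upon adding the paper's expressions for the $p<q$ and $p>q$ subcases) to condense the within-cycle double sum---but these are bookkeeping optimizations of the paper's explicit geometric series and triple summations, not a genuinely different argument.
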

\begin{proof}
    Begin by labeling the vertices of the pinwheel graph \( PW(n, k) \) as \( \{0, 1, \dots, nk - k\} \), where vertex \( 0 \) denotes the central hub, and each of the \( n \) peripheral \( k \)-cycles is assigned a contiguous block of \( k \) consecutive indices.

        To compute $\Tilde{m}_{pq}$, we consider the four cases: \begin{enumerate}
        \item[(i)] $p, q \in C_n^{(l)}$ with $d_p = d_q = 2$. \begin{enumerate}
            \item[(a)] $p < q$
            \item[(b)] $p > q$
        \end{enumerate}
        \item[(ii)] $p\in C_n^{(l)}, q\in C_n^{l'}$ with $l\neq l'$ and $d_p = d_q = 2$. 
        \item[(iii)] $p\in C_n^{(l)}, q = 0$ with $d_p = 2, d_q = 2k$. 
        \item[(iv)] $p = 0, q\in C_n^{(l)}$ with $d_p = 2k, d_q = 2$.
    \end{enumerate}

Now we compute hitting times associated with each one.

\begin{enumerate}
    \item[(i)]\begin{enumerate}
        \item[(a)] Considering all possible paths from p to q, we compute \begin{align*}
            \Tilde{m}_{pq} &= \frac{q-p}{2} + \left(\frac{p-(nl-l)}{2}\right)\left(\frac{n(l+1)-l-q}{2k-1}\right) \\ &+\sum_{m=0}^\infty\left(\frac{1}{2}\right)\left(\frac{2k-2}{2k-1}\right)\left(\frac{2k-3}{2k-1}\right)^m\left(\frac{1}{2k-1}\right)\bigg[p-(nl-l) + (m+1)n + n(l+1)-l-q\bigg] \\
            &+\sum_{m=0}^\infty\left(\frac{1}{2}\right)\left(\frac{2k-2}{2k-1}\right)\left(\frac{2k-3}{2k-1}\right)^m\left(\frac{1}{2k-1}\right)\bigg[p-(nl-l) + (m+1)n + q -(nl-l)\bigg] \\
            &= \frac{2k^2n-2kln+2kl-2kn+2kq+ln+n(l+1)-2l-2q}{4k-2}.
        \end{align*}
        \item[(b)] In a very similar computation to (a), it is shown that \begin{align*}
            \Tilde{m}_{pq} = \frac{2k^2n + 2l(k(n-1)+1) -2kq -2n(l+1)+n+2q}{4k-2}
        \end{align*}
    \end{enumerate}
    \item[(ii)] Again considering all possible paths from p to q, compute \begin{align*}
        \Tilde{m}_{pq} &= \sum_{m=0}^\infty\left(\frac{1}{2}\right)\left(\frac{2k-3}{2k-1}\right)^m\left(\frac{1}{2k-1}\right)\bigg[p-(nl-l)+mn+n(l'+1)-l'-q\bigg] \\
        &+ \sum_{m=0}^\infty\left(\frac{1}{2}\right)\left(\frac{2k-3}{2k-1}\right)^m\left(\frac{1}{2k-1}\right)\bigg[p-(nl-l)+mn+q-(nl'-l')\bigg] \\
        &+ \sum_{m=0}^\infty\left(\frac{1}{2}\right)\left(\frac{2k-3}{2k-1}\right)^m\left(\frac{1}{2k-1}\right)\bigg[n(l+1)-l-p+mn+n(l'+1)-l'-q\bigg] \\
        &+ \sum_{m=0}^\infty\left(\frac{1}{2}\right)\left(\frac{2k-3}{2k-1}\right)^m\left(\frac{1}{2k-1}\right)\bigg[n(l+1)-l-p+mn+q-(nl'-l')\bigg] \\
        &= n\left(k-\frac{1}{2}\right).
    \end{align*}
    \item[(iii)] In this case the NBRW will head straight towards q no matter the initial choice: \begin{align*}
        \Tilde{m}_{pq} &= \frac{1}{2}\left(p-(nl-l)\right) + \frac{1}{2}\left(n(l+1)-l-p\right) \\
        &= \frac{n}{2}.
    \end{align*}
    \item[(iv)] \begin{align*}
        \Tilde{m}_{pq} &= \frac{n(l+1)-l-q}{2k} + \frac{q-(nl-l)}{2k} \\
        &+ \sum_{m=0}^\infty\left(\frac{2k-2}{2k}\right)\left(\frac{2k-3}{2k-1}\right)^m\left(\frac{1}{2k-1}\right)\bigg[(m+1)n + n(l+1)-l-q\bigg]\\
        &+ \sum_{m=0}^\infty\left(\frac{2k-2}{2k}\right)\left(\frac{2k-3}{2k-1}\right)^m\left(\frac{1}{2k-1}\right)\bigg[(m+1)n + q-(nl-l)\bigg] \\
        &= kn -n + \frac{n}{2k}.
    \end{align*}
\end{enumerate}
Finally, note that $\pi = \begin{bmatrix}
    \frac{1}{n} & \frac{1}{kn} & \frac{1}{kn} & \dots & \frac{1}{kn}
\end{bmatrix}$ and calculate \begin{align*}
    \kemeny^{(nb)}_v(PW_{n,k}) &= \pi^TM^{(nb)}_v\pi \\
    &= \left(\frac{1}{kn}\right)^2\sum_{l=0}^{k-1}\sum_{p=nl-l+1}^{nl-l+n-2}\sum_{q=p+1}^{nl-l+n-1}\bigg[\frac{2k^2n-2kln+2kl-2kn+2kq+ln+n(l+1)-2l-2q}{4k-2}\bigg] \\
    &+ \left(\frac{1}{kn}\right)^2\sum_{l=0}^{k-1}\sum_{p=nl-l+2}^{nl-l+n-1}\sum_{q=nl-l+1}^{p-1}\bigg[\frac{2k^2n + 2l(k(n-1)+1) -2kq -2n(l+1)+n+2q}{4k-2}\bigg] \\
    &+ \left(\frac{1}{kn}\right)^2\sum_{p=1}^{kn-k}\sum_{q=1}^{kn-k-n+1}\bigg[n\left(k-\frac{1}{2}\right)\bigg] \\
    &+ \left(\frac{1}{kn^2}\right)\sum_{p=1}^{kn-k}\bigg[\frac{n}{2}\bigg] \\
    &+ \left(\frac{1}{kn^2}\right)\sum_{p=1}^{kn-k}\bigg[kn-n+\frac{n}{2k}\bigg] \\
    &= \frac{(n-1)(12k^3n-18k^2n+13kn-2k-4n+2)}{6kn(2k-1)}.
\end{align*}
\end{proof}

\begin{lemma} \label{lem:pinwheel_srw_kem}
    For the graph $G = PW(n, k)$, 
    \[
    \mathcal{K}_v(G) = \frac{(2k-1)(n^2-1)}{6}
    \]
\end{lemma}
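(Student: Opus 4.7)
The plan is to bypass hitting-time calculations and use the effective resistance formulation of Kemeny's constant. Combining the commute-time identity $m_{ij}+m_{ji}=2m\,R_{ij}$ with $\mathcal{K}_v(G)=\sum_{i<j}\pi_i\pi_j(m_{ij}+m_{ji})$ and $\pi_i=d_i/(2m)$ yields
\[
\mathcal{K}_v(G)=\frac{1}{2m}\sum_{i<j}d_id_jR_{ij}.
\]
For $G=PW(n,k)$ we have $m=kn$, $d_0=2k$ at the hub, and $d_v=2$ at every other vertex.

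The key structural observation is that the hub $0$ is a cut vertex separating the $k$ cycles, which forces two clean resistance expressions. For $p,q$ in a common cycle $C_n^{(l)}$ at cyclic distance $c$, I claim $R_{pq}=c(n-c)/n$: the only connection from any other cycle $C_n^{(l')}$ to the rest of the graph is through the single port $0$, so a closed loop attached at one port with no voltage source must carry zero current inside $C_n^{(l')}$, rendering those cycles invisible to the $p\to q$ current flow. For $p\in C_n^{(l)}$ and $q\in C_n^{(l')}$ with $l\ne l'$, the cut-vertex series rule gives $R_{pq}=R_{p,0}+R_{0,q}$, each term of the familiar $a(n-a)/n$ form with $a$ the cycle distance to the hub.

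The sum $\sum_{i<j}d_id_jR_{ij}$ then splits into three disjoint pieces: pairs involving the hub (A), same-cycle non-hub pairs (B), and cross-cycle non-hub pairs (C). All three are controlled by the two elementary identities
\[
\sum_{a=1}^{n-1}\frac{a(n-a)}{n}=\frac{n^2-1}{6}, \qquad \sum_{1\le i<j\le n-1}\frac{(j-i)(n-(j-i))}{n}=\frac{(n-2)(n^2-1)}{12},
\]
where the second follows by computing the total sum over all pairs of a single $C_n$ and subtracting the hub contribution supplied by the first. After multiplying by the appropriate degree products and combinatorial factors---$4k^2$ for (A), $4k$ for (B), and $4\binom{k}{2}\cdot 2(n-1)$ for (C)---the three contributions combine to $\tfrac{n^2-1}{3}\bigl[\,2k^2+k(n-2)+2k(k-1)(n-1)\,\bigr]=\tfrac{kn(2k-1)(n^2-1)}{3}$. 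Dividing by $2m=2kn$ then yields $(2k-1)(n^2-1)/6$.

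The main obstacle is the same-cycle resistance claim, i.e. showing that $R_{pq}$ is genuinely unaffected by the $k-1$ other cycles dangling at the hub; the zero-circulating-current argument sketched above handles this rigorously. Once that step is granted, the remainder of the proof is elementary bookkeeping with standard polynomial identities, and the degree-weighted sums telescope neatly to give the stated closed form.
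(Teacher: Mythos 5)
Your proof is correct, and it takes a genuinely different route from the paper's. You work from the effective-resistance formulation $\mathcal{K}_v(G)=\frac{1}{2m}\sum_{i<j}d_id_jR_{ij}$ (commute-time identity plus $\pi_i=d_i/2m$), then exploit the fact that the hub is a cut vertex to reduce every $R_{ij}$ to single-cycle resistances $c(n-c)/n$, splitting the sum into hub pairs, same-cycle pairs, and cross-cycle pairs. I verified the weights you assign: with $m=kn$, $d_0=2k$, $d_v=2$ elsewhere, the three pieces contribute $4k^2\cdot\frac{n^2-1}{6}$, $4k\cdot\frac{(n-2)(n^2-1)}{12}$, and $4\binom{k}{2}\cdot 2(n-1)\cdot\frac{n^2-1}{6}$, whose sum is $\frac{n^2-1}{6}\cdot 2kn(2k-1)$, and dividing by $2kn$ gives $\frac{(2k-1)(n^2-1)}{6}$ as required. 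The paper instead invokes a specialized closed-form for Kemeny's constant of a $1$-sum $G_0\oplus_v\cdots\oplus_v G_{k-1}$ (Corollary 2.4 of Faught et al.), reducing the problem to two known quantities: $\mathcal{K}(C_n)=\frac{n^2-1}{6}$ and the degree-weighted resistance moment $\mu(C_n,v)=\frac{n^2-1}{3}$, which combine linearly as $\mathcal{K}(C_n)+(k-1)\mu(C_n,v)$. The paper's route is shorter because it leans on a black-box identity tailored to vertex-sums; your route is more self-contained and elementary, requiring only the commute-time identity and the series/pendant rules for effective resistance, at the cost of slightly more bookkeeping. Both are sound; the only thing to make explicit if you wrote this up formally is the zero-circulating-current argument for same-cycle pairs, which you correctly identify as the one nontrivial step.
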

\begin{proof}
    Corollary 2.4 of \cite{faught20221} gives that for $G = G_0 \oplus_v G_1 \oplus_v \dots \oplus_v G_{n-1}$ and $m_i = \lvert E(G_i)\rvert $, \begin{align*}
        \mathcal{K}(G) = \frac{\sum_{i=0}^{n-1}m_i\left(\mathcal{K}(G_i) + \sum_{j\neq i}\mu(G_i, v)\right)}{\sum_{i=0}^{n-1}m_i}
    \end{align*}
    Where $\mu(G_i, v) = \sum_{j\in V(G_i)}d_jr_{G_i}(j, v)$ is the $moment$ of $v\in V(G_i)$ and $r_{G_i}(j, v)$ is the effective resistance of $j$ to $v$ in $G_i$. Note that for the case at hand, $G_i = C_n$ and $m_i = n$ for all $i = 0, \dots, k-1$. It is well-known that  that $\mathcal{K}(C_n) = \frac{n^2-1}{6}$ \cite{bendito2008formula, wang2017kemeny}.  Finally, the last tool we will need is that $r_{C_n}(i, j) = \frac{d(i,j)(n-d(i,j))}{n}$ with $d(i,j)$ representing the minimum number of steps from node $i$ to node $j$, which is established in the proof of theorem 5.2 of \cite{breen2023kemeny}. Now with all of this, we can compute \begin{align*}
        \mathcal{K}(PW_{n,k}) &= \frac{\sum_{i=0}^{k-1}n\left(\mathcal{K}(C_n) + \sum_{j=0}^{k-2}\mu(C_n, v)\right)}{\sum_{i=0}^{k-1}n} \\
        &= \mathcal{K}(C_n) + (k-1)\mu(C_n, v) \\
        &= \frac{n^2-1}{6} + 2\sum_{j\in V(C_n)}\frac{d(j,v)(n-d(j,v))}{n} \\
        &= \frac{n^2-1}{6} + \frac{2}{n}\sum_{j=1}^{n-1}j(n-j) \\
        &= \frac{n^2-1}{6} + \frac{n^2-1}{3} \\
        &= \frac{(2k-1)(n^2-1)}{6}.
    \end{align*}
\end{proof}

From Lemmas~\ref{lem:pinwheel_nb_kem} and~\ref{lem:pinwheel_srw_kem}, we see that \( \mathcal{K}_v(PW(n, k)) \in O(kn^2) \), while \( \pi^\top M_v^{\mathrm{nb}} \pi \in O(kn) \). A straightforward verification using a computer algebra system confirms that the NBRW Kemeny’s constant is strictly smaller than its SRW counterpart.



\section{Conclusions and Conjectures}
We have proposed two definitions for a non-backtracking variant of Kemeny's constant of a graph.  We have shown that these match for edge-transitive graphs, and more generally for graphs satisfying the locally uniform return time property of Condition \ref{cond: locally uniform return time}.  This naturally leads to several questions that remain open.

\begin{question}
    Can we classify all graphs satisfying Condition \ref{cond: locally uniform return time}?
\end{question}

As observed earlier, all edge-transitive graphs satisfy this condition, but there are some examples of non-edge-transitive graphs that also satisfy it.  We do not know if these examples are sporadic, or fit somehow into larger infinite families of examples.  

While Condition \ref{cond: locally uniform return time} was sufficient for equality of our two proposed definitions, we do not know if there may be other graphs for which they are also equal.

\begin{question}
    Does the equality $\pi^\top M^{(nb)}_v\pi = \tr{Z^{(nb)}}-1$ imply that Condition \ref{cond: locally uniform return time} holds?
\end{question}

It is of further interest to understand how far apart the two proposed definitions can be.  

\begin{question}
    Can we bound $\pi^\top M^{(nb)}_v\pi - (\tr{Z^{(nb)}}-1)$?
\end{question}

Of course, Corollary \ref{cor:Z_kem_general} suggests that one answer to this question is given by the quantity $\frac{1}{2m}\tr{DE}$ (with $D$ and $E$ defined in Section \ref{sec:gap}).  However, these matrices are difficult to parse, and our results thus far do not give a clear picture of how large this trace can be. 

Of significant interest to us is the comparison between the values of Kemeny's constant for the non-backtracking random walk and the simple random walk.  Our examples have shown contrasting behavior between extremal examples for the NBRW and the SRW. Vertices of degree two seem to have a big impact on the NBRW Kemeny's constant.  We are particularly interested in the relative sizes.

\begin{question}\label{quest:lessthan}
    Does $\pi^\top M^{(nb)}_v\pi < \mathcal{K}(G)$ hold for all graphs $G$ for which this is defined?
\end{question}

The examples that we have presented in Section \ref{sec:ex} lead us to conjecture an affirmative answer to this question.  Similar behavior has been observed for the NBRW mixing rate and the SRW mixing rate (see \cite{alon2007non,kempton2016non}) as well as for the edge-space versions of Kemeny's constant \cite{breen2023kemeny}, but in all those cases, there are some exceptions.  We have observed no exceptions to Question \ref{quest:lessthan} in all examples that we have done. 

If the answer to Question \ref{quest:lessthan} is positive, then from there it is of interest to determine how far apart the NBRW and SRW Kemeny's constants can be.  The examples and data that we have presented seem to suggest that in some cases, the non-backtracking condition leads to significant improvement.  It is known, for instance, that the SRW Kemeny's constant for a graph on $n$ vertices is always bounded above by $O(n^3)$, and examples achieving $O(n^3)$ are known \cite{breen2019computing}.  However, no such examples are known for our non-backtracking variants.  

\begin{question}
    Is the NBRW Kemeny's constant for a graph with $n$ vertices always bounded above by $O(n^2)$?
\end{question}

Answers to any of these questions or related questions would significantly improve our understanding of non-backtracking random walks, and how they compare with simple random walks.  

\bibliographystyle{plain}
\bibliography{sources}

\clearpage
\appendix

\section{$C_nC_k$}
\subsection{Proof of Lemma~\ref{lem:CnCk_kemeny}} \label{apx:proof_cnck_kem}
\begin{proof}
    Begin by denoting the two cycles formed by the additional edge as \( C_k \) and \( C_{n+2-k} \). Label the vertices \( \{1, \dots, n+1\} \), starting with one of the degree-three vertices. Continue labeling consecutively around the cycle, first traversing through \( C_k \).
    
    We then consider the following cases.
    \begin{enumerate}
        \item $p, q \in C_k$ with $\deg(p) = \deg(q) = 3$. Either the random walk moves directly to $q$, or it is goes around either $C_k$ or $C_{n+2-k}$ directly to $q$. Thus we compute $\Tilde{m}_{pq} = \frac{1}{3} + \frac{1}{3}(k-1) + \frac{1}{3}(n+1-k) = \frac{1}{3}(n+1)$.
        \item $p, q \in C_k$ with $\deg(p) = \deg(q) = 2$. Here it can be shown that \begin{align*}
            \Tilde{m}_{pq} &= \frac{1}{2}(q-p) + \sum_{m=0}^\infty \left(\frac{1}{2}\right)^{2m+3}\left[p-1+1+m(n-k+2)+(k-q)\right]\\
            &+ 2\sum_{m=1}\left(\frac{1}{2}\right)^{2m+2}[p-1+m(n-k+2)+q-1] \\
            &+ \sum_{m=0}^\infty \left(\frac{1}{2}\right)^{2m+3}[p - 1 + (n-k+1) + m(n-k+2) + (k-q)] \\
            &= \frac{2-k+3n+2q}{6}
        \end{align*}
        \item $p, q \in C_k$ with $\deg(p) = 2, \deg(q) = 3$. Here, the walk will either move directly to $q$ after the first step, or it will end up in the opposite vertex defining the $C_k$, from which it either moves directly to $q$ or does one loop through $C_{n+2-k}$. So we compute $\Tilde{m}_{pq} = \frac{1}{2}(p-1) + \frac{1}{4}(k-p+1) + \frac{1}{4}(k-p+n-k+1) = \frac{1}{4}(n+k)$.
        \item $p, q \in C_k$ with $\deg(p) = 3, \deg(q) = 2$. This can be broken into five different trajectory types, culminating in 
        \begin{align*}
            \Tilde{m}_{pq} &= \frac{1}{3}(q-1) + \sum_{m=0}^\infty \left(\frac{1}{3}\right)\left(\frac{1}{2}\right)^{2m+1}[1 + m(n-k+2) + k-q] \\
            &+ \sum_{m=0}^\infty \left(\frac{1}{3}\right)\left(\frac{1}{2}\right)^{2m+1}[n - k + 1 + m(n-k+2) + k-q] \\
            &+ \sum_{m=0}^\infty \left(\frac{1}{3}\right)\left(\frac{1}{2}\right)^{2m+2}[1+n-k+1 + m(n-k+2) + q-1] \\
            &+ \sum_{m=0}^\infty \left(\frac{1}{3}\right)\left(\frac{1}{2}\right)^{2m+2}[n-k+1+1+m(n-k+2)+q-1] \\
            &= \frac{7-2k+6n+q}{9}
        \end{align*}
        \item $p \in C_k, q \in C_{n+2-k}$ with $\deg(p) = \deg(q) = 2$. Here there are four different trajectories, 
        \begin{align*}
            \Tilde{m}_{pq} &= \sum_{m=0}^\infty \left(\frac{1}{2}\right)^{2m+2}[p-q+mk+q-1] \\
            &+ \sum_{m=0}^\infty \left(\frac{1}{2}\right)^{2m+3}[p+mk+n-k+2-q] \\
            &+ \sum_{m=0}^\infty \left(\frac{1}{2}\right)^{2m+2}[k-p+mk+n-k+2-q] \\
            &+ \sum_{m=0}^\infty \left(\frac{1}{2}\right)^{2m+3}[k-p+1+mk+q-1] \\
            &= \frac{3n+2k+2}{6}
        \end{align*}
        \item $p \in C_{n+2-k}, q \in C_k$ with $\deg(p) = \deg(q) = 2$. This can be calculated by swapping the roles of $k$ and $n+2-k$ in the above argument, thus 
        \begin{align*}
            \Tilde{m}_{pq} = \frac{3n+2(n+2-k) + 2}{6} = \frac{5n-2k+6}{6}
        \end{align*}
        \item $p, q \in C_{n+2-k}$ with $\deg(p) = \deg(q) = 2$. This is very similar to (2), so swap the roles of $k$ with $n+2-k$ to get 
        \[
        \Tilde{m}_{pq} = \frac{2n+k+2q}{6}
        \]
        \item $p \in C_k, q \in C_{n+2-k}$ with $\deg(p) = 3, \deg(q) = 2$. This is the analogue of (3), thus \[
        \Tilde{m}_{pq} = \frac{1}{4}(n + (n+2-k)) = \frac{1}{4}(2n+2-k)
        \]
        \item $p \in C_{n+2-k}, q \in C_k$ with $\deg(p) = 2, \deg(q) = 3$. Similarly, this is the analogue of (4), so we can compute 
        \[
        \Tilde{m}_{pq} = \frac{4n+2k+q+3}{9}
        \]
    \end{enumerate}
    Noticing that $\pi_i = \frac{\deg i}{\text{vol}(G)} = \frac{\deg i}{2(n+1)}$, we compute 
    \begin{align*}
        \pi^\top M_{v}^{nb}\pi &= 2\left[\frac{3}{2(n+1)} \cdot \frac{3}{2(n+1)} \cdot \frac{1}{3}(n+1)\right] \\
        &+ 2\sum_{p=2}^{k-2}\sum_{q=p+1}^{k-1}\left[\frac{2-k+3n+2q}{6} \cdot \frac{2}{2(n+1)} \cdot \frac{2}{2(n+1)}\right] \\
        &+ 2\sum_{p=2}^{k-1}\left[ \frac{2}{2(n+1)} \cdot \frac{3}{2(n+1)} \cdot \frac{1}{4}(n+k) \right] \\
        &+ 2\sum_{q=2}^{k-1}\left[ \frac{2}{2(n+1)} \cdot \frac{3}{2(n+1)} \cdot \frac{7-2k+6n+q}{9} \right] \\
        &+ \sum_{p=2}^{k-1}\sum_{q=2}^{n+1-k}\left[ \frac{2}{2(n+1)} \cdot \frac{2}{2(n+1)} \cdot \frac{3n+2k+2}{6} \right] \\
        &+ \sum_{p=2}^{n+1-k}\sum_{q=2}^{k-1}\left[ \frac{2}{2(n+1)} \cdot \frac{2}{2(n+1)} \cdot \frac{5n-2k+6}{6} \right] \\
        &+ 2\sum_{p=2}^{n-k}\sum_{q=p+1}^{n-k+1} \left[ \frac{2}{2(n+1)} \cdot \frac{2}{2(n+1)} \frac{2n+k+2q}{6} \right] \\
        &+ 2\sum_{p=2}^{n+1-k} \left[ \frac{2}{2(n+1)} \cdot \frac{3}{2(n+1)} \cdot \frac{1}{4}(2n+2-k) \right] \\
        &+ 2\sum_{q=2}^{n+1-k}\left[ \frac{2}{2(n+1)} \cdot \frac{3}{2(n+1)} \cdot \frac{4n+2k+q+3}{9} \right] \\
        &= \frac{10n^3 + n^2(3k+16) + n(-3k^2 + 9k -22) - 3k^2 + 6k-15}{18(n+1)^2}
    \end{align*}
\end{proof}

\section{Cycle Barbell}
\subsection{Proof of Lemma~\ref{lem:cycle_barbell}} \label{apx:proof_cb_kem}
\begin{proof}
Begin by labeling the vertices \( \{1, \dots, a + b + k - 2\} \), starting with a vertex adjacent to the degree-three vertex in \( C_a \). Proceed sequentially by traversing \( C_a \), then \( P_k \), and finally \( C_b \).

To find $M^{(nb)}_v$, we divide the problem into 23 cases. Each case is a different combination of a starting vertex $p$ and target vertex $q$. We compute the mean first passage time in each case. 
\begin{enumerate}
    \item $p,q \in C_a$ with $\deg{p} = \deg{q} = 2$. It can be shown that when $p < q$, $\Tilde{m}_{pq} = \frac{3a}{8} + \frac{b}{2} + \frac{k}{2} + \frac{q+1}{4} - \frac{1}{2}$ and that when $p > q$, $\Tilde{m}_{pq} = \frac{5a}{8} + \frac{b}{2} + \frac{k}{2} - \frac{(q+1)}{4} - \frac{1}{2}$.
    \item $p, q \in C_a$ with $\deg{p} = 2, \deg{q} = 3$. This case is very simple as the random walker is forced directly towards $q$ regardless of the initial choice in step. Thus $\Tilde{m}_{pq} = \frac{a}{2}$. 
    \item $p, q \in C_a$ with $\deg{p} = 3, \deg{q} = 2$. The random walker here may be forced away from q initially if the initial step is taken down the path, so it is slightly more complicated than (2). But it can still be shown that $\Tilde{m}_{pq} = \frac{a}{2} + \frac{2b}{3} + \frac{2k}{3} - \frac{2}{3}$.
    \item $p\in C_a, q\in P_k$ with $\deg{p} = \deg{q} = 2$. Here it is calculated that $\Tilde{m}_{pq} = \frac{a}{2} + q + 1$. 
    \item $p\in P_k, q \in C_a$ with $\deg{p} = \deg{q} = 2$. $\Tilde{m}_{pq} = \frac{a}{2} + b + k -1$. 
    \item $p\in C_a, q\in C_b$ with $\deg{p} = 2, \deg{q} = 3$. Once the random walker escapes from $C_a$, the path towards $q$ is determined. $\Tilde{m}_{pq} = \frac{3a}{2} + k -1$.
    \item $p\in C_b, q\in C_a$ with $\deg{p} = 3, \deg{q} = 2$. Here the random walk may get stuck for a time inside $C_b$, but the path is direct to $q$ after it escapes. $\Tilde{m}_{pq} = \frac{a}{2} = \frac{4b}{3} + k -1$.
    \item $p\in C_a, q \in C_b$ with $\deg{p} = \deg{q} = 2$. The walk may spend time cycling around $C_a$, but move directly towards $q$ afterwards. $\Tilde{m}_{pq} = \frac{3a}{2} + \frac{b}{2} + k - 1$. 
    \item $p\in C_b, q\in C_a$ with $\deg{p} = \deg{q} = 2$. Similar to (8), $\Tilde{m}_{pq} = \frac{3b}{2} + \frac{a}{2} + k-1$.
    \item $p \in C_a, q\in C_b$ with $\deg{p} = \deg{q} = 3$. $\Tilde{m}_{pq} = \frac{4a}{3} + k -1$.
    \item $p\in C_b, q\in C_a$ with $\deg{p} = \deg{q} = 3$. Similar to above, $\Tilde{m}_{pq} = \frac{4b}{3}+k-1$.
    \item $p,q \in P_k$ with $\deg{p} = \deg{q} = 2$. If $p < q$, then $\Tilde{m}_{pq} = q + 1$. Otherwise if $p > q$, then $\Tilde{m}_{pq} = a + b + k - q -2$.
    \item $p\in P_k, q\in C_b$ with $\deg{p} = \deg{q} = 2$. This case (and following cases) are similar to the counterparts when we considered $C_a$ instead of $C_b$. We find $\Tilde{m}_{pq} = a + \frac{b}{2} + k - 1$. 
    \item $p, q \in C_b$ with $\deg{p} = \deg{q} = 2$. If $p < q$ then we find that $\Tilde{m}_{pq} = \frac{a}{4} + \frac{3b}{8} + \frac{k}{4} + \frac{q+1}{4} - \frac{1}{4}$ and if $p > q$, then $\Tilde{m}_{pq} = \frac{3a}{4} + \frac{5b}{8} + \frac{3k}{4} - \frac{(q+1)}{4} - \frac{3}{4}$. 
    \item $p\in C_b, q\in C_a$ with $\deg{p} = 2, \deg{q} = 3$. $\Tilde{m}_{pq} = \frac{3b}{2} + k -1$.
    \item $p\in C_a, q\in C_b$ with $\deg{p} = 3, \deg{q} = 2$. $\Tilde{m}_{pq} = \frac{4a}{3} + \frac{b}{2} + k-1$.
    \item $p\in C_b, q\in C_b$ with $\deg{p}=2, \deg{q}=3$. $\Tilde{m}_{pq} = \frac{b}{2}$.
    \item $p\in C_b, q\in C_b$ with $\deg{p} = 3, \deg{q} = 2$. $\Tilde{m}_{pq} = \frac{2a}{3} + \frac{b}{2} + \frac{2k}{3} - \frac{2}{3}$.
    \item $p\in C_b, q\in C_k$ with $\deg{p} = \deg{q} = 2$. $\Tilde{m}_{pq} = a + \frac{3b}{2} + k - q -2$.
    \item $p\in C_a, q\in P_k$ with $\deg{p} = 3, \deg{q}=2$. $\Tilde{m}_{pq} = \frac{a}{3} + q + 1$. 
    \item $p\in C_b, q\in P_k$ with $\deg{p}=\deg{q}=2$. $\Tilde{m}_{pq} = a + \frac{4b}{3} + k -q -1$.
    \item $p\in P_k, q\in C_a$ with $\deg{p}=2, \deg{q} = 3$. $\Tilde{m}_{pq} = b + k -1$.
    \item $p\in P_k, q\in C_b$ with $\deg{p}=2, \deg{q}=3$. $\Tilde{m}_{pq} = a + k -1$.
\end{enumerate}

Recall that $|E(CB)| = a + b + k - 1$. This enables us to determine the entries of the stationary vector, $\pi$. Almost every entry in $\pi$ will be $\frac{2}{2(a + b + k -1)}$; there are only 2 entries that are different, corresponding the 2 vertices of degree 3: $\frac{3}{2(a+b+k-1)}$. Defining $M = 2(a+b+k-1)$, we can compute Kemeny's constant as follows. 

\begin{align*}
    \pi^TM^{(nb)}_v\pi &= \frac{2}{M} \left[
  \frac{2}{M}\sum_{p=1}^{a-2}\sum_{q=p+1}^{a-1} \left(\frac{3a}{8} + \frac{b}{2} + \frac{k}{2} + \frac{q}{4} - \frac{1}{2}\right)
  + \frac{2}{M}\sum_{p=2}^{a-1} \sum_{q=1}^{p-1} \left(\frac{5a}{8} + \frac{b}{2} + \frac{k}{2} - \frac{q}{4} - \frac{1}{2}\right) \right. \\
  &\quad \left. + \frac{3}{M}\sum_{p=1}^{a-1} \left(\frac{a}{2}\right)
  + \sum_{p=1}^{a-1} \frac{2}{M}\sum_{q=a+1}^{a+k-2} \left(\frac{a}{2} + q\right) \right. \\
  &\quad \left. + \frac{3}{M}\sum_{p=1}^{a-1} \left(\frac{3a}{2} + k - 1\right)
  + \frac{2}{M}\sum_{p=1}^{a-1} \sum_{q=a+k}^{a+b+k-2} \left(\frac{3a}{2} + \frac{b}{2} + k - 1\right)
\right] \\
&+ \frac{3}{M} \left[
  \frac{2}{M}\sum_{q=1}^{a-1}\left(\frac{a}{2}+\frac{2b}{3}+\frac{2k}{3}-\frac{2}{3}\right)
  + \frac{3}{M}\left(\frac{4a}{3}+k-1\right) \right. \\
&\quad \left. + \frac{2}{M}\sum_{q=a+k}^{a+b+k-2}\left(\frac{4a}{3} + \frac{b}{2} + k - 1\right)
  + \frac{2}{M}\sum_{q=a+1}^{a+k-2}\left(\frac{a}{3} + q\right)
\right] \\
&+ \frac{2}{M} \left[
  \frac{2}{M}\sum_{p=a+1}^{a+k-2} \sum_{q=1}^{a-1} \left(\frac{a}{2} + b + k - 1\right)
  + \frac{2}{M}\sum_{p=a+1}^{a+k-2} \sum_{q=a+k}^{a+b+k-2} \left(a + \frac{b}{2} + k - 1\right) \right. \\
  &\quad \left. + \frac{3}{M}\sum_{p=a+1}^{a+k-2} \left(b + k - 1\right)
  + \frac{3}{M}\sum_{p=a+1}^{a+k-2} \left(a + k - 1\right) \right. \\
  &\quad + \left. \frac{2}{M}\sum_{p=a+1}^{a+k-2} \sum_{q=a+1}^{p-1} \left(a + b + k - q - 1\right)
  + \frac{2}{M}\sum_{p=a+1}^{a+k-2} \sum_{q=p+1}^{a+k-2} \left(q\right)
\right] \\
&+ \frac{3}{M} \left[
    \frac{2}{M}\sum_{q=1}^{a-1}\left(\frac{a}{2} + \frac{4b}{3} + k - 1\right)
    + \frac{3}{M}\left(\frac{4b}{3} + k -1\right) \right. \\
    &\quad + \left. \frac{2}{M}\sum_{q=a+k}^{a+b+k-2}\left(\frac{2a}{3} + \frac{b}{2} + \frac{2k}{3} - \frac{2}{3}\right)
    + \frac{2}{M}\sum_{q=a+1}^{a+k-2}\left(a + \frac{4b}{3} + k - q -1\right)
\right] \\
&+ \frac{2}{M} \left[ 
\frac{2}{M}\sum_{p=a+k}^{a+b+k-2} \sum_{q=1}^{a-1} \left( \frac{3b}{2} + \frac{a}{2} + k - 1 \right) 
+ \frac{2}{M}\sum_{p=a+k}^{a+b+k-3} \sum_{q=p+1}^{a+b+k-2} \left( \frac{a}{4} + \frac{3b}{8} + \frac{k}{4} + \frac{q}{4} - \frac{1}{4} \right) \right. \\
&\quad + \left. \frac{2}{M}\sum_{p=a+k}^{a+b+k-2} \sum_{q=a+k}^{p-1} \left( \frac{3a}{4} + \frac{5b}{8} + \frac{3k}{4} - \frac{q}{4} - \frac{3}{4} \right) 
+ \frac{3}{M}\sum_{p=a+k}^{a+b+k-2} \left( \frac{3b}{2} + k - 1 \right) \right. \\
&\quad + \left. \frac{3}{M}\sum_{p=a+k}^{a+b+k-2} \left( \frac{b}{2} \right)
+` \frac{2}{M}\sum_{p=a+k}^{a+b+k-2} \sum_{q=a+1}^{a+k-2} \left( a + \frac{3b}{2} + k - q - 1 \right)
\right]
 \end{align*}
The terms in brackets then condense to 
\begin{align*}
&=\frac{(-1 + a) (13a^2 + a (-74 + 48b + 48k) + 12 (3 + b^2 - 4k + k^2 + b (-5 + 2k)))}{6 (2 (a + b + k - 1))^2} \\
&+ \frac{7 + 3a^2 + 3b^2 - 10k + 3k^2 + b (-13 + 6k) + a (-19 + 12b + 12k)}{(2 (a + b + k - 1))^2} \\
&+ \frac{2 (-2 + k) (6 + 3a^2 + 3b^2 - 10k + 4k^2 + 3b (-5 + 3k) + 3a (-5 + 4b + 3k))}{3 (2 (a + b + k - 1))^2} \\
&+ \frac{7 + 3a^2 + 3b^2 - 10k + 3k^2 + a (-13 + 12b + 6k) + b (-19 + 12k)}{4 (-1 + a + b + k)^2} \\
&+ \frac{(-1 + b) (12a^2 + 13b^2 + 12a (-5 + 4b + 2k) + b (-74 + 48k) + 12 (3 - 4k + k^2))}{6 (2 (a + b + k - 1))^2} \\
\end{align*}
Finally the above expression simplifies to the desired expression, \begin{align*}
    &\frac{13a^3 + 13b^3 + b^2(-87 + 60k) + a^2(-87 + 60b + 60k) + 2b(49 - 72k + 24k^2)}{24(-1 + a + b + k)^2} \\
&+ \frac{4(-9 + 20k - 15k^2 + 4k^3) + 2a(49 + 30b^2 - 72k + 24k^2 + 12b(-7 + 4k))}{24(-1 + a + b + k)^2}
\end{align*}
\end{proof}

\subsection{Proof of Lemma~\ref{lem:cb_max}} \label{apx:proof_cb_max}
\begin{proof}
    Denote the expression of the non-backtracking Kemeny's constant of $CB(k, a, b)$ (the right hand side of the equation in Lemma \ref{lem:cycle_barbell}) by 
    \[
    f(k, a, b) = \frac{N(k, a, b)}{24(-1 + a + b + k)^2}
    \]
    where $N(k, a, b)$ is the function in the numerator. Fix $n$ and recalling that $n=a + b + k -2$, note that the denominator of $f$, $24(n+1)^2$ is also fixed. Therefore, we restrict our focus to maximizing the numerator $N(k, a, b)$. Now fix $k$. Thus the expression $a + b = n - k + 2$ is also fixed. Define the fixed quantity $s := n-k+2$ so that we can write $b = s - a$. Consider then $N(k, a, s-a)$, which simplifies to 
    \[
    N(k, a, s-a) = \alpha a^2 + \beta a + \gamma
    \]
    where 
    \begin{align*}
        \alpha &= 3(-7n + 15k - 16) \\
        \beta &= 3(7n^2 + 30n + 15k^2 - 2k(23 + 11n) + 32)
    \end{align*}
    and $\gamma$ is a quantity that depends only on $n$ and $k$. This shows that $N(k, a, s-a)$ is a quadratic function of a single variable, $a$, and thus has critical point 
    \[
    a^* = -\frac{\beta}{2\alpha} = \frac{n-k+2}{2}
    \]
    At this point, note that the feasible regions for parameters $k, a, b$ are as follows:
    \begin{align*}
        3 \leq a, b \leq n-k-1 \\
        2 \leq k \leq n - 4  \\
        6 \leq n
    \end{align*}
    which are purely informed by the structure of the cycle barbell graph. 

    Now, $a^*$ is indeed the maximizer of $N(k, a, s-a)$ whenever $\alpha < 0$ as the function would then be concave. If $\alpha > 0$, then the function is convex and the maximizer would be found at the end points of the feasible region for $a$, i.e. $a \in \{3, n-k-1\}$. If $\alpha=0$, then we have a linear function in $a$ and will also have a maximizer at the end points of the feasible region. It will thus be considered with the case that $\alpha > 0$.

    The proof follows by showing in cases that when $\alpha < 0$, that $N\left(k, \frac{n-k+2}{2}, \frac{n-k+2}{2}\right)$ is a non-increasing function of $k$, thus maximized at $N\left(2, \frac{n}{2}, \frac{n}{2}\right)$. For simplicity of argument, we assume that $n$ is even such that $\frac{n}{2}$ is an integer and assert that the case of odd $n$ follows similarly. In the case that $\alpha > 0$, we assume without loss of generality (as $N(k, a, b)$ is symmetric in $a$ and $b$) that the optimal $a$ is $3$ and that $b$ is $n-k-1$, then show that
    \[
    \underset{2\leq k \leq n-4}{\max}N(k, 3, n-k-1) \leq N(2, n/2, n/2)
    \]
    which proves the result. \\

    \noindent{Case 1: $\alpha < 0$.}
    Observe, 

    \begin{align*}
        N\left(k, \frac{n-k+2}{2}, \frac{n-k+2}{2}\right) - N\left(k+1, \frac{n-k+1}{2}, \frac{n-k+1}{2}\right) = -\frac{3}{4}\left(33 + 15 k^2 + 25 n - n^2 - k (17 + 14 n)\right)
    \end{align*}

    Define $Q(k) := 15k^2 - k(17+14n) - n^2 + 25n +33$. We will show that for all $n \geq 7$, $Q(k) < 0$ for all $k$. Note that $Q(k)$ is a convex quadratic function in $k$, so it will achieve it's maximum at its endpoints, either $k=2$ or $k=n-4$. We see that 
    \[
    Q(2) = 59 - 3n - n^2
    \]
    which is negative for all integers $n \geq 7$. Additionally, 
    \[
    Q(n-4) = 341 - 56n
    \]
    which is also negative for all integers $n \geq 7$. Thus for $n \geq 7$, $N\left(k, \frac{n-k+2}{2}, \frac{n-k+2}{2}\right)$ is non-increasing as a function of $k$ and is thus maximized at $k=2$ and $CB(k=2, a=n/2, b=n/2)$.
    At $n=6$, there is only one feasible graph: $CB(k=2, a=3, b=3)$, which aligns with the conclusion of the lemma. \\

    \noindent{Case 2: $\alpha \geq 0$.}
    In this setting, the quadratic in $a$ is convex, so the maximizer lies on the boundary.  
    By symmetry in $a,b$, we may take $a^* = 3$ and $b^* = n-k-1$.  
    To compare with the balanced candidate $(k,a,b) = (2,n/2,n/2)$, consider the difference
    \[
    D(k,n) := N\!\left(2,\tfrac{n}{2},\tfrac{n}{2}\right) - N\!\left(k,3,n-k-1\right)
    = \tfrac{3}{4}\Bigl(320 - 20k^{3} + 44k^{2}(n-2) + 20n - 42n^{2} + 7n^{3} - 4k(39 - 28n + 7n^{2})\Bigr).
    \]

    Direct computation verifies that $D(k, n) \ge 0$ for all integers $k \ge 2$ and $n \ge 6$, which are precisely the conditions defining our feasible set.
    

    Therefore, regardless of the maximizing pair $(a^*,b^*)$ for fixed $k$, the global maximum of $N(k,a,b)$ at fixed $n$ occurs when $k=2$ and $a,b$ are as balanced as possible.

\end{proof}

\end{document}